\newfont{\bb}{msbm10 at 12pt}
\newfont{\bbt}{msbm10 at 9pt}
\def\r{\mathbb R}
\def\d{\mathbb D}
\def\n{\mathbb N}
\def\z{\mathbb Z}
\def\t{\mathbb T}
\def\c{\hbox{\bb C}}
\def\b{\hbox{B}}
\def\fr{\mathscr F}
\def\sr{\mathscr S}
\def\w{\mathscr W}
\def\er{\mathscr E}
\DeclareMathOperator{\IM}{Im}
\DeclareMathOperator{\RE}{Re}
\DeclareMathOperator{\supp}{supp}
\DeclareMathOperator{\D}{\langle D \rangle}
\DeclareMathOperator{\DIV}{div}
\DeclareMathOperator{\op}{Op}
\newcommand{\dps}{\displaystyle}
\newcommand{\norm}[1]{\left\Vert #1 \right\Vert}
\newcommand{\abs}[1]{\left\vert #1 \right\vert}
\newcommand{\set}[1]{\left\{#1\right\}}
\newcommand{\eps}{\epsilon}
\numberwithin{equation} {section}
\theoremstyle{plain}\newtheorem{lemma}{Lemma}[section]
\theoremstyle{plain}\newtheorem{proposition}{Proposition}[section]
\theoremstyle{plain}\newtheorem{theorem}{Theorem}[section]
\theoremstyle{plain}\newtheorem{definition}{Definition}[section]
\theoremstyle{plain}\newtheorem{remark}{Remark}[section]
\theoremstyle{plain}\newtheorem{corollary}{Corollary}[section]
\theoremstyle{plain}
\theoremstyle{plain}\newtheorem{notation}{Notation}[section]
\theoremstyle{plain}\newtheorem{definition-notation}{Definition-Notation}[section]
\theoremstyle{plain}\newtheorem{definition-proposition}{Definition-Proposition}[section]
\numberwithin{equation}{section}
\newcommand{\keyword}[1]{\textbf{\textit{Keywords---}} \textbf{#1}}
\title{A geometric proof of the Quasi-linearity of the water-waves system}
\author{Ayman Rimah Said$\mbox{}^\dag$}
\def\notina[#1]#2{\begingroup\def\thefootnote{\fnsymbol{footnote}}\footnote[#1]{#2}\endgroup}
\begin{document}

\notina[0]{$\mbox{}^\dag$ PhD student at l'IMO, Paris Sud University and Centre Borelli, ENS Paris Saclay. Email: \url{aymanrimah@gmail.com}.}

\begin{abstract}
In the first part of this paper we prove that the flow associated to the Burgers equation with a non local term
 of the form $\partial_x \abs{D}^{\alpha-1} u$ fails to be uniformly continuous from bounded sets of $H^s(\d)$ to $C^0([0,T],H^s(\d))$ for $T>0$, $s>\frac{1}{2}+2$, $0\leq \alpha <2$, $\d=\r \ \text{or} \ \t $ and $H$ is the Hilbert transform. Furthermore we show that the flow cannot be $C^1$ from bounded sets of $H^s(\d)$ to $C^0([0,T],H^{s-1+(\alpha-1)^+ +\eps}(\d))$ for $\eps>0$.
We generalize this result to a large class of nonlinear transport-dispersive equations in any dimension, that in particular 
   contains the Whitham equation and the paralinearization of the water waves system with and without surface tension. The current result is optimal in the sense that for $\alpha=2$ and $\d=\t$ the flow associated to the Benjamin-Ono equation is Lipschitz on function with $0$ mean value $H^s_0$.
   
    In the second part of this paper we apply this method to deduce the quasi-linearity of the water waves system, which is the main result of this paper.
    
 \keyword{Flow map, Regularity, Quasi-linear, nonlinear Burgers type hyperbolic equations, nonlinear Burgers type dispersive equations, Water Waves system.}
\end{abstract}

\maketitle


\vspace{-5mm}

\tableofcontents

\vspace{-7mm}

\section{Introduction}

A commonly found definition is that a partial differential equation is said to be quasi-linear if it is linear with respect to all the highest order derivatives of the unknown function, for example equations of the form:
\[\partial_t u+\sum A_j(u)\partial_ju=F(u).\]
Which we compare to the definition of semi-linearity as a partial differential equation whose highest order terms are linear, for example equations of the form:
 \[\partial_t u+\sum A_j\partial_ju=F(u).\]
This distinction is supposed to classify the equations in accordance to how one solves their respective Cauchy problems. For example, semi-linear equations are expected to be solved locally by a Picard iteration scheme and thus the associated flow map is expected to depend regularly on the data. On the other hand quasi-linear equations are expected to be solved by a compactness method and no more information than continuity can be recovered on the flow map. The problem of those broad definitions with the count of derivatives is that they fail to classify the equations according to this simple criteria on their Cauchy problem. Indeed by those definitions the \eqref{geometric proof quasi WW_introduction_KPI} and  \eqref{geometric proof quasi WW_introduction_KPII} equations are semi-linear by the count of the derivatives, indeed they are given by:
\begin{align}
(u_t+uu_x+u_{xxx})_x+u_{yy}&=0 \tag{KPI} \label{geometric proof quasi WW_introduction_KPI},\\
(u_t+uu_x+u_{xxx})_x-u_{yy}&=0 \tag{KPII}  \label{geometric proof quasi WW_introduction_KPII}.
\end{align}

Bourgain showed in \cite{Bourgain93} that  \eqref{geometric proof quasi WW_introduction_KPII}  can be solved by an iteration scheme and that the flow map is regular. But Moulinet, Saut and Tzvetkov showed in \cite{Saut02} that the flow map associated to \eqref{geometric proof quasi WW_introduction_KPI} cannot be $C^2$ and that it cannot be solved by a Picard iteration scheme. This was improved upon in \cite{Koch08} where the flow map was shown not to be uniformly continuous on bounded sets of Sobolev spaces. Motivated by \cite{Saut02}, the authors introduced the following definitions to quasi-linearity and semi-linearity that we will use here:
\begin{itemize}
\item A partial differential equation is said to be semi-linear if its flow map is regular (at least $C^1$).
\item A partial differential equation is said to be quasi-linear if its flow map is not $C^1$.
\end{itemize}

It is well known that the flow map associated to the Burgers equation:
\[\partial_t u+u\partial_x u=0 \text{ on } \r, \]
 fails to be uniformly continuous, giving the equation its quasi-linear nature, as for example shown in \cite{Saut13}.
  An important class of equations that arises in the study of asymptotic models of the water waves equations is Burgers type equation with a dispersive term, 
  for example the Benjamin-Ono equation:
  \[\partial_t u+u\partial_x u+H \partial^2_xu=0 \text{ on }  \r, \tag{BO} \label{geometric proof quasi WW_introduction_BO} \]
   and Korteweg-de Vries equation:
   \[\partial_t u+u\partial_x u+ \partial^3_xu=0 \text{ on } \r. \tag{KdV} \label{geometric proof quasi WW_introduction_KdV}\] 
   
  It was also shown in \cite{Koch05}, that the flow map associated to the Benjamin-Ono equation on $H^s(\r), s>\frac{3}{2}$ fails to be uniformly continuous. 
The proof relies heavily on the dimension, the structure of the equation and on some interactions between small and high frequencies thus it does not generalize to the case of $\t$.
More generally in \cite{Molinet01} (see also \cite{Saut13}), it is shown that the flow map fails to be $C^2$ (thus the equations are unsolvable by a Picard fixed point scheme) for equations of the form:
\[\partial_t u+u\partial_x u+\omega(D)\partial_x u=0,  \text{ with }  \abs{\omega(\xi)}\leq \abs{\xi}^\gamma,  \gamma<2.\]
Here the proof relies heavily on the Duhamel formula, on the explicit solvability of the linear part using the Fourier transform 
and again on some interactions between small and high frequencies thus it does not generalize to the case of $\t$.\\ 
 
 In \cite{Saut13}, for the KdV equation, using Strichartz type dispersive estimates the Cauchy problem is solved by a Picard fixed point scheme 
 and thus the flow map is regular, showing a change in nature for the problem. 
 This shows that an interesting phenomena happening where the dispersive term can dominate the nonlinearity. 
On $\r$, the previous examples show that this change of regime happens for a dispersive term of order 3. 
 Thus the result obtained in \cite{Saut13} is optimal in $d=1$.\\

In this paper we improve these results in several directions:
\begin{itemize}
\item we prove the result for a generic dispersive perturbation of order $\alpha<2$,
\item we prove the strongest result possible by proving that the flow is not uniformly continuous,
\item for $\eps>0$ we prove that the flow cannot be $C^1$ from $H^s(\d)$ to \\ $C^0([0,T],H^{s-1+(\alpha-1)^+ +\eps}(\d))),$ where $a^+=\max(a,0)$ for a real number $a$.
\item we prove the result in any dimension for a generalized system of equations covering both the dispersive generalised KdV equation and the system obtained after paralinearisation and symmetrisation of the water waves system.
\end{itemize}
  For the sake of clarity we begin by stating a result in dimension 1.
\begin{theorem}\label{geometric proof quasi WW_introduction_ theorem on dispersive Burgers eq} 

Consider three real numbers $\alpha\in [0,2[$, $s\in]2+\frac{1}{2},+\infty[$, $r>0$ and $u_0 \in H^{s}(\d)$. Then there exists $T>0$ such that for all $v_0$ in the ball $\b(u_0,r)\subset  H^{s}(\d)$ there exists a unique $v\in C([0,T],H^s(\d))$ solving the Cauchy problem:
\begin{equation}\label{geometric proof quasi WW_introduction_ theorem on dispersive Burgers eq_eq dispersive Burgers}
\begin{cases} 
\partial_t v+v\partial_xv+\partial_x \abs{D}^{\alpha-1} v=0 \\
v(0,\cdot)=v_0(\cdot),
\end{cases}
\end{equation}
where,
\[\abs{D}=\op(\xi).\]

Moreover for all $R>0$, the flow map:
	\begin{align*}
										\b(0,R) \rightarrow &C([0,T],H^s(\d))\\
										v_0 \mapsto &v
										\end{align*}
				is not uniformly continuous.\\
				    
Considering a weaker control norm we get, for all $\eps'>0$ the flow map:
	\begin{align*}
										\b(0,R) \rightarrow &C([0,T],H^{s-1+(\alpha-1)^+ +\eps'}(\d))\\
										v_0 \mapsto &v
										\end{align*}
				is not $C^1$.
\end{theorem}	

The restriction $s>2+\frac{1}{2}$ is essentially a technical one that comes up when handling the dispersive term for $\alpha>1$. Indeed looking closely through the proof, more specifically at estimate \eqref{geometric proof quasi WW_study model pbm_subsec proof key lem point 2_eq2} we see that the exact hypothesis needed for the proof to hold is $s>\max(\frac{3}{2},\alpha+\frac{1}{2})$, which coincides with the threshold of well-posedness of $\frac{3}{2}$ for $\alpha\leq 1$. In the case of $\d=\t$, the non uniform continuity of the flow on Sobolev spaces can be deduced in a more straightforward manner, see the appendix of \cite{Molinet07}, using the following symmetry, if $u$ is a solution to equation then so is $u(t,x+wt)+w$ for $w$ in $\r$. The proof proposed here where we carefully analyse the transport part of the equation can be simply modified to cover the case where the previous symmetry no longer holds. Indeed if we restrict ourselves to initial data to the hyperplane of mean-value zero functions, which is a property propagated by the flow, the proof then holds the same but with the choice of the base function for the ansatz $\omega \in C_0^\infty$ be made in such a way that $\omega$ has mean-value zero.

\begin{remark}
In our work \cite{Ayman19} we improve the previous result by showing that on the Torus, the loss of $2-\alpha$ derivative is sufficient to have Lipschitz control over the flow map, under a zero mean value hypothesis.
\end{remark}

We shall prove a stronger result (see Theorem \ref{geometric proof quasi WW_a tech gen_main theorem}) showing that for a dispersive perturbation of order $\alpha<2$,
 the non-linear transport term dominates the flow's evolution locally and this happens independently of the dimension. 
This limited regularity of the flow implies that the Cauchy problem can not be solved by a Picard fixed point scheme and thus those equations are quasi-linear.

\begin{itemize}
 \item The results in \cite{Saut13} suggest that the result obtained here are sub-optimal because it suggests that the change to the semi-linear type equations happens for $\alpha=3$, and the flow associated to the Benjamin-Ono equation on $\r$ fails to be uniformly continuous as shown in \cite{Koch05}. In \cite{Ayman19} we show that the flow map associated to the following equation
\[\partial_t v+\RE(v)\partial_xv+i\partial_x^2 v=0,\] 
    is Lipschitz from bounded sets of $H^s(\r)$ to $C^0([0,T],H^{s}(\r))$ under the extra hypothesis of $L^1$ control on the data. Showing that the lack of regularity obtained in \cite{Saut13} for $\alpha \geq 2$ is essentially due to the lack of control of the $L^1$ norms in Sobolev spaces on $\r$.
    
   \item This optimality is also confirmed by the results in \cite{Molinet08} where L. Molinet proves that the flow map has Lipschitz regularity for the Benjamin-Ono equation on the torus in $H^s_0(\t)$ for $ s\geq 0$, which are the Sobolev spaces of functions with zero mean value. 

    \item In our work \cite{Ayman19} we generalize the result on the Benjamin-Ono equation and prove that the flow map associated to the Burgers equation with a non local term of the form $D^{\alpha-1} \partial_x  u$, $\alpha \in ]1,+\infty[$ is Lipschitz from bounded sets of $H^s_0(\t;\r)$ to $C^0([0,T],H^{s-(2-\alpha)^+}_0(\t;\r)),s>1+\frac{1}{2}$. Thus proving that the result obtained here is optimal for $\alpha \in ]1,2]$. Moreover we investigate the effect of the low frequency component and show that for $\alpha \in [0,+\infty[$ the flow map is not Lipschitz from bounded sets of $H^s(\t;\r)$ to $C^0([0,T],H^{s-(2-\alpha)^+\eps}(\t;\r)),\eps>0$. 
    
    This agrees with the results from \cite{Saut13} on $\r$ for $\alpha < 3$ but show that the KdV is quasi-linear for initial data in $H^s(\t;\r)$, $s>\frac{3}{2}$ which is due to the lack of dispersive estimates on $\t$. 
    
    It's important to note that those results agree with Bourgain's results on the well posedness for the periodic Kdv equation in \cite{Bourgain93kdv} and Molinet's results in in \cite{Molinet08}. Indeed in \cite{Bourgain93kdv} the contraction method is applied on initial data in $H^s_0$ and then a gauge transform is used to deduce well posedness for general data. It's exactly this gauge transform that we use to prove the lack of regularity of the flow map when passing from $H^s_0$ to $H^s$.
 
    \end{itemize}

    Finally Theorem \ref{geometric proof quasi WW_a tech gen_main theorem} contains applications to different classes of equations:

-Firstly the Whitham equation on $\r$:
\[
\begin{cases}
\partial_t u +u\partial_x u-Lu_x=0,\\
Lf(x)=\int e^{i x\cdot \xi}p(x,\xi)\hat{f}(\xi) d\xi,
\end{cases}
\]
is quasi-linear for $p \in S^\alpha, \alpha<1$ and such that $\IM(p) \in S^0$ (See \eqref{paracomposition_Notions of microlocal analysis_Pseudodifferential Calculus_def symbol} for the definition of the symbol classes).
\\

-The second and main application is the water waves system with and without surface tension. We follow here the presentation in \cite{Alazard11} and \cite{Alazard14}.
\subsection{Assumptions on the domain}
 We consider a domain with free boundary, of the form:
 \[\set{(t,x,y) \in [0,T]\times \r^d \times \r:(x,y)\in \Omega_t},\]
 where $\Omega_t$ is the domain located between a free surface
 \[\Sigma_t=\set{(x,y)\in \r^d\times \r:y=\eta(t,x)}\]
 and a given (general) bottom denoted by $\Gamma = \partial \Omega_t \setminus \Sigma_t$. More precisely we assume that initially $(t=0)$ we have the hypothesis $H_t$ given by:
\begin{itemize}
 \item \label{geometric proof quasi WW_introduction_ subsection assupmtions on the domain_$H_t$}The domain $\Omega_t$ is the intersection of the half space, denoted by $\Omega_{1,t}$, located below the free surface $\Sigma_t$,
 \[\Omega_{1,t}=\set{(x,y)\in \r^d\times \r:y<\eta(t,x)}\]
 and an open set $\Omega_2 \subset \r^{d+1}$ such that $\Omega_2$ contains a fixed strip around $\Sigma_t$, which means that there exists $h>0$ such that,
 \[\set{(x,y)\in \r^d \times \r: \eta(t,x)-h\leq y \leq \eta(t,x)}\subset \Omega_2.\]
 We shall assume that the domain $\Omega_2$ (and hence the domain $\Omega_t=\Omega_{1,t} \cap \Omega_2$) is connected.
 \end{itemize}
 
\subsection{The equations}
 We consider an incompressible inviscid liquid, having unit density. The equations of motion are given by the Euler system on the velocity field $v$: 
 \begin{equation}\label{geometric proof quasi WW_introduction_ subsection the equations_Euler eq}
 \begin{cases}
 \partial_t v+v\cdot \nabla v+\nabla P=-ge_y \\
  \DIV v=0
\end{cases} 
  \text{ in }  \Omega_t,
 \end{equation}
 where $-ge_y$ is the acceleration of gravity $(g>0)$ and where the pressure term $P$ can be recovered from the velocity by solving an elliptic equation. The problem is then coupled with 
 the boundary conditions:
 \begin{align}\label{geometric proof quasi WW_introduction_ subsection the equations_boundary cond 1}
 \begin{cases}
 v\cdot n=0 &  \text{on }  \Gamma, \\
 \partial_t \eta=\sqrt{1+|\nabla \eta|^2}v \cdot \nu &  \text{on }  \Sigma_t,\\
 P=-\kappa H(\eta)  &  \text{on }  \Sigma_t,
 \end{cases}
 \end{align}
 where $n$ and $\nu$ are the exterior normals to the bottom $\Gamma$ and the free surface $\Sigma_t$, $\kappa$ is the surface tension and $H(\eta) $ is the mean curvature of the free surface:
 \[
 H(\eta)=\DIV \bigg(\frac{\nabla \eta}{\sqrt{1+|\nabla \eta|^2}}\bigg)
 .\]

  We take $\kappa=1$ for the case with surface tension and $\kappa=0$ in the case of gravity water waves (without surface tension). The first condition in \eqref{geometric proof quasi WW_introduction_ subsection the equations_boundary cond 1} expresses in fact that the particles in contact with the rigid bottom remain in contact with it. As no hypothesis is made on the regularity of $\Gamma,$ this condition is shown to make sense in a weak variational meaning due to the hypothesis $H_t$, for more details on this we refer to Section 2 in \cite{Alazard11} and Section 3 in \cite{Alazard14}.\\
  
  The fluid motion is supposed to be irrotational and $\Omega_t$ is supposed to be simply connected thus the velocity $v$ field derives
   from some potential $\phi$ i.e $v=\nabla \phi$ and:
\[\begin{cases}
\Delta \phi=0  \text{ in }  \Omega,\\  \partial_n \phi=0  \text{ on } \Gamma.
\end{cases}
\]

The boundary condition on $\phi$ becomes:
 \begin{align}\label{geometric proof quasi WW_introduction_ subsection the equations_boundary cond 2}
 \begin{cases}
 \partial_n \phi =0 &  \text{on } \Gamma, \\
 \partial_t \eta=\partial_y \phi -\nabla \eta \cdot \nabla \phi & \text{on } \Sigma_t,\\
 \partial_t \phi=-g\eta+\kappa H(\eta)-\frac{1}{2} \abs{\nabla_{x,y} \phi}^2 &  \text{on }  \Sigma_t.
 \end{cases}
 \end{align}

Following Zakharov \cite{Zakharov68} and Craig-Sulem \cite{Craig93} we reduce the analysis to a system on the free surface $\Sigma_t$. If $\psi$
is defined by 
\[\psi(t,x)=\phi(t,x,\eta(t,x)),\]
then $\phi$ is the unique variational solution of
\[\Delta \phi =0  \text{ in } \Omega_t,  \ \phi_{|y=\eta}=\psi,  \  \partial_n \phi =0  \text{ on } \Gamma.\]
Define the Dirichlet-Neumann operator by 
\begin{align*}
(G(\eta)\psi)(t,x)&=\sqrt{1+|\nabla \eta|^2}\partial_n \phi_{|y=\eta}\\
&=(\partial_y \phi)(t,x,\eta(t,x))-\nabla \eta(t,x) \cdot (\nabla \phi)(t,x,\eta(t,x)).
\end{align*}
For the case with rough bottom we refer to \cite{Alazard09},  \cite{Alazard11} and \cite{Alazard14} for the well posedness of the variational problem and the Dirichlet-Neumann operator.
Now $(\eta,\psi)$ (see for example \cite{Craig93}) solves:
  \begin{align}\label{geometric proof quasi WW_introduction_ subsection the equations_WW eq}
 \partial_t \eta&=G(\eta)\psi ,\\
 \partial_t \psi&=-g\eta+\kappa H(\eta)+\frac{1}{2} \abs{\nabla \psi}^2  +\frac{1}{2}\frac{\nabla \eta \cdot \nabla \psi+G(\eta)\psi}{1+|\nabla \eta|^2}.\nonumber
 \end{align}
\subsection{Gravity water waves: Pressure and Taylor Coefficients} 
Here we give a quick review of the ideas in \cite{Alazard13}. Recall that by definition for gravity water waves we work with $\kappa=0$ and we define
the Taylor coefficient
\[a(t,x)=-(\partial_yP)(t,x,\eta(t,x)).\]

The stability of the waves is dictated by the Taylor sign condition, which is the assumption that there exists a positive constant c such that
\begin{align}\label{geometric proof quasi WW_introduction_ Gravity water waves Pressure and Taylor Coefficients_Taylor condition}
a(t,x)\geq c>0.
\end{align}
In \cite{Alazard14} this condition is needed in the proof of the well posedness of the Cauchy problem and it is shown to be locally propagated by the flow.\\

Now we will show how to define $P$ from the Zakharov formulation. Let R be the variational solution of 
\[\Delta R=0  \text{ in } \Omega_t,\ \ R_{|y=\eta}=\eta g+\frac{1}{2}\abs{\nabla_{x,y}\phi}^2_{|y=\eta}.\]
We define the pressure P in the domain $\Omega$ by 
\[P(x,y)=R(x,y)-gy-\frac{1}{2}\abs{\nabla_{x,y} \phi(x,y)}^2.\]

 In \cite{Alazard13}  Alazard, Burq, and Zuily show that to a solution: $$(\eta,\psi) \in C([0,T],H^{s+\frac{1}{2}(\r^d)}\times H^{s+\frac{1}{2}(\r^d))},$$ for $s >\frac{d}{2}+\frac{1}{2}$ of the Zakharov/Craig-Sulem system \eqref{geometric proof quasi WW_introduction_ subsection the equations_WW eq} corresponds a unique solution $v$ of the Euler system.
 
\subsection{Quasi-linearity of the water Wave system }
 In \cite{Alazard11} and \cite{Alazard14},  Alazard, Burq, and Zuily perform a paralinearization and symmetrization of the the water waves system that take the form:
\[
\partial_tu+ T_V.\nabla u+i T_{\gamma}u=f,
\]
where $\gamma$ is of order $\frac{3}{2}$ in the case with surface tension and $\frac{1}{2}$ in the case without. The terms $V$ and $\gamma$ verify the conditions required by Theorem \ref{geometric proof quasi WW_a tech gen_main theorem} and thus the paralinearization of the water-waves system are quasi-linear in the considered thresholds of regularity. From this we will deduce the following two theorems.\\

First in the case of water waves with surface tension, i.e $\kappa=1$, where the well-posedness of the Cauchy problem is proved in \cite{Alazard11} we complete it by the following.
\begin{theorem}\label{geometric proof quasi WW_introduction_Quasi-linearity of the water Wave system_theorem with surface tension}
Fix the dimension $d \geq 1$ and consider two real numbers $r>0$,\\
 $s \in]2+ \frac{d}{2},+\infty[$ and $(\eta_0,\psi_0)\in H^{s+\frac{1}{2}}(\r^d) \times H^{s}(\r^d) $ such that 
 $$\forall (\eta'_0,\psi'_0) \in \b((\eta_0,\psi_0),r)\subset H^{s+\frac{1}{2}}(\r^d) \times H^{s}(\r^d)$$
  the assumption $H_{t=0}$ is satisfied. Then there exists $T>0$ such that the Cauchy problem \eqref{geometric proof quasi WW_introduction_ subsection the equations_WW eq} with initial data $(\eta'_0,\psi'_0)\in \b((\eta_0,\psi_0),r)$ has a unique solution
\[(\eta',\psi') \in C^0([0,T];H^{s+\frac{1}{2}}(\r^d) \times H^{s}(\r^d) )\]
and such that the assumption $H_t$ is satisfied for $t \in [0,T]$.

Moreover $\forall R>0$ the flow map: 
	\begin{align*}
										\b(0,R)  \rightarrow &C([0,T],H^{s+\frac{1}{2}}(\r^d) \times H^{s}(\r^d))\\
										(\eta'_0,\psi'_0) \mapsto &(\eta',\psi')
										\end{align*}
				is not uniformly continuous.
								
We show that at least a loss of $\frac{1}{2}$ derivative is necessary to have Lipschitz control over the flow map, i.e for all $\eps'>0$ the flow map
	\begin{align*}
										\b(0,R)  \rightarrow &C([0,T],H^{s+\eps'}(\r^d) \times H^{s-\frac{1}{2}+\eps'}(\r^d))\\
										(\eta'_0,\psi'_0) \mapsto &(\eta',\psi')
										\end{align*}

				is not $C^1$.
			
\end{theorem} 

\begin{remark}
In our work \cite{Ayman19} we prove that for the Gravity Capillary equation in dimension one, the loss of $\frac{1}{2}$ is sufficient to have Lipschitz control over the flow map, which is an improvement compared to the non linearity of order $\frac{3}{2}$ in the dispersive term.
\end{remark}

Now we turn to gravity water waves, i.e $\kappa=0$ where the well posedness of the Cauchy problem is proved in \cite{Alazard14}. It is well known that the vertical and horizontal traces of the velocity on the free boundary play an important role in the well posedness of the Cauchy problem and are given by:
\begin{align}\label{geometric proof quasi WW_introduction_Quasi-linearity of the water Wave system_equations on the traces of the velocity}
B&=(\partial_y \phi)_{|y=\eta}=\frac{\nabla \eta \cdot \nabla \psi +G(\eta) \psi}{1+|\nabla \eta|^2},\\
V&=(\nabla_x \phi)_{|y=\eta}=\nabla\psi-B\nabla \eta.\nonumber
\end{align}
\begin{theorem}\label{geometric proof quasi WW_introduction_Quasi-linearity of the water Wave system_theorem without surface tension}
Fix the dimension $d \geq 1$ and consider two real numbers $r>0$,\\
 $s \in]2+ \frac{d}{2},+\infty[$\footnote{Here we are slightly above the threshold of well-posedness of $1+\frac{d}{2}$ proved in \cite{Alazard14}.} and $(\eta_0,\psi_0)\in H^{s+\frac{1}{2}}(\r^d) \times H^{s+\frac{1}{2}}(\r^d) $ and consider $$(\eta'_0,\psi'_0) \in \b((\eta_0,\psi_0),r)\subset H^{s+\frac{1}{2}}(\r^d) \times H^{s+\frac{1}{2}}(\r^d)$$ such that we have:
\begin{enumerate}
\item $V'_0 \in H^s(\r^d), \ \ B'_0 \in H^s(\r^d),$
\item $H_{t=0}$ is satisfied,
\item there exits a positive constant c such that, $\forall x \in \r^d, a'_0(x)\geq c>0$.
\end{enumerate}
Then there exists $T>0$ such that the Cauchy problem \eqref{geometric proof quasi WW_introduction_ subsection the equations_WW eq} with initial data $(\eta'_0,\psi'_0)$ has a unique solution
\[(\eta',\psi') \in C^0([0,T];H^{s+\frac{1}{2}}(\r^d) \times H^{s+\frac{1}{2}}(\r^d) )\]
such that for $t \in [0,T]$ the assumption $H_t$ is satisfied, $\forall x \in \r^d, a'(t,x)\geq \frac{c}{2}$ and 
\[(V',B') \in C^0([0,T];H^{s}(\r^d) \times H^{s}(\r^d) ).\]

Moreover $\forall R>0$, the flow map: 	\begin{align*}
										\b(0,R) 
										 \rightarrow &C([0,T],H^{s+\frac{1}{2}}(\r^d) \times H^{s+\frac{1}{2}}(\r^d))\\
										(\eta'_0,\psi'_0) \mapsto &(\eta',\psi')
										\end{align*}
				is not uniformly continuous.\\
Considering a weaker control norm we get: For all $\eps'>0$, the flow map:
	\begin{align*}
										\b(0,R) 
										 \rightarrow &C([0,T],H^{s-\frac{1}{2}+\eps'}(\r^d) \times H^{s-\frac{1}{2}+\eps'}(\r^d))\\
										(\eta'_0,\psi'_0) \mapsto &(\eta',\psi')
										\end{align*}

				is not $C^1$.	
\end{theorem}

\begin{remark}
It is worth noticing that a previous result was obtained on the regularity of the flow map for the two dimensional gravity-capillary water waves (i.e with surface tension) in \cite{Marzuola13} where they have proved that the flow is not $C^3$ with respect to initial data $(\eta_0,\psi_0)\in H^{s+\frac{1}{2}}(\r^2) \times H^{s}(\r^2) $ for $s<3$. 

This result is in contrast with our result which holds for $s>3$ and this can indeed be seen in the fact that in \cite{Marzuola13} the lack of regularity of the flow is shown to be primarily due to the influence of surface tension. Though in our work the lack of regularity of the flow is shown to be due to the hydrodynamic term (the non-linear transport term).
\end{remark}
\begin{remark}
As the Cauchy problem for the water waves system on $\t^d$ is solved by the same particularization and symmetrization (see \cite{Alazard16}) and our technical generalization in Section \ref{geometric proof quasi WW_a tech gen} is proved on $\d^d$ the previous results for the water waves on $\r^d$ extend tautologically to $\t^d$.
\end{remark}
\subsection{Strategy of the proof}
We explain the key ideas at the level of the equation \eqref{geometric proof quasi WW_introduction_ theorem on dispersive Burgers eq_eq dispersive Burgers}, 
\[\partial_t v+v\partial_xv+\partial_x \abs{D}^{\alpha-1} v=0.\]
 
 The point of start is to adapt the classic proof of the quasi-linearity of the Burgers equation, presented to me in a personal note of C. Zuily \cite{Zuily}, that we will recall here.
\subsubsection{Quasi-linearity of the Burgers equation}
 The result of quasi-linearity of the Burgers equation is that the flow map taken point-wise in time fails to be uniformly continuous. Such a result is obtained by constructing two families of solutions $u $ and $v $ from some initial data $u^0 $and $v^0 $ depending on parameters $\lambda$ and $\epsilon$ such that 
 $$ \lim_{\substack{\lambda \to +\infty \\ \eps \to 0 }} \norm{u^0 -v^0 }_{H^s}=0 \text{ and } \norm{(u -v )(t,\cdot)}_{H^s}\geq c>0, \text{ with $t >0$.}$$ 
 
 To show how to construct such families we start by recalling the usual geometric construction of the graph of a function $u(t,\cdot)$ solution to the Burgers equation with initial data $u^0$. Put $$\chi(t,x)=x+tu^0(x)$$ the characteristic flow associated to the problem, which is a diffeomorphism in the $x$ variable. Then,  $$ u(t,\cdot)=u^0\circ \chi(t,x)^{-1}.$$
 The action of $\chi^{-1}$ on the graph of $u^0$ is given by the following Figure \ref{geometric proof quasi WW_introduction_sketch of the proof_figure 1} that also shows the shock formation phenomena.
 \begin{figure}[h!]
\begin{tikzpicture}[xscale=1.3,yscale=1.5,dot/.style={circle,inner sep=1pt,fill,label={#1},name=#1},
 extended line/.style={shorten >=-#1,shorten <=-#1},
 extended line/.default=1cm]
\draw [->] (-3,5) -- (-3,9);
\draw [->] (-5,7) -- (-1,7);
\node[right] at (-3,8.8) {y};
\node[below right] at (-1,7) {x};
\path [name path=C0] (-4.5,7) to [out=0,in=180] (-3,8) to [out=0,in=135] (-2,7+1/3)
        to [out=-45,in=180] (-1.5,7) ;
\path [name path=D0] (-2,2)--(-2,9);
\coordinate (Q0) at (-2,7);
\coordinate (P0) at (-3,7);
\coordinate (Pin) at (-3,8);
\node [fill=black,inner sep=1pt,label=225:$P^{in}_0$] at (P0) {};
\node [fill=black,inner sep=1pt,label=-45:$Q^{in}_0$] at (Q0) {};
\node [fill=black,inner sep=1pt,label=120:$P^{in}$] at (Pin) {};
\path [name intersections={of=C0 and D0,by=Qin}];
\node [fill=black,inner sep=1pt,label=45:$Q^{in}$] at (Qin) {};
\draw [dotted] (Qin) -- (Q0);
\draw [dotted] (Pin) -- (P0);
\draw [thick]  (-4.5,7) node[above] {$y=u^0$}  to [out=0,in=180] (-3,8) to  [out=0,in=135] (-2,7+1/3)
        to [out=-45,in=180] (-1.5,7) ;
\draw [->] (0,3) -- (0,7);
\draw [->] (-2,5) -- (2,5);
\node[right] at (0,6.8) {y};
\node[below right] at (2,5.2) {x};
\coordinate (Q1) at (1+1/3,5);
\coordinate (Qin1) at (1+1/3,5+1/3);
\coordinate (P1) at (1,5);
\coordinate (Pin1) at (1,6);
\node [fill=black,inner sep=1pt,label=260:$P_0(t_1)$] at (P1) {};
\node [fill=black,inner sep=1pt,label=-45:$Q_0(t_1)$] at (Q1) {};
\node [fill=black,inner sep=1pt,label=120:$P(t_1)$] at (Pin1) {};
\node [fill=black,inner sep=1pt,label=45:$Q(t_1)$] at (Qin1) {};
\draw [dotted] (Qin1) -- (Q1) ;
\draw [dotted] (Pin1) -- (P1);
\draw [thick] (-1.5,5) node[above] {$y=u(t_1)$}  to [out=0,in=180] (Pin1) to  [out=0,in=100] (Qin1)
        to [out=-80,in=180] (1.5,5) ;
\draw [->] (3,1) -- (3,5);
\draw [->] (1,3) -- (6,3);
\node[right] at (3,4.8) {y};
\node[below right] at (6,3) {x};
\coordinate (Q2) at (4+2/3,3);
\coordinate (Qin2) at (4+2/3,3+1/3);
\coordinate (P2) at (5,3);
\coordinate (Pin2) at (5,4);
\node [fill=black,inner sep=1pt,label=-45:$P_0(t_2)$] at (P2) {};
\node [fill=black,inner sep=1pt,label=225:$Q_0(t_2)$] at (Q2) {};
\node [fill=black,inner sep=1pt,label=45:$P(t_2)$] at (Pin2) {};
\node [fill=black,inner sep=1pt,label=120:$Q(t_2)$] at (Qin2) {};
\draw [dotted] (Qin2) -- (Q2);
\draw [dotted] (Pin2) -- (P2);
\draw [thick] (1.5,3) node[above] {$y=u(t_2)$}  to [out=0,in=180] (Pin2) to  [out=0,in=50] (Qin2)
        to [out=-130,in=180] (4.5,3) ;
\draw[->,thin](-5,7+4/3)--(5,3-4/3);
\coordinate (t_1) at (0,5);
\coordinate (t_2) at (3,3);
\coordinate (0) at (-3,7);
\node[above right] at (0) {$0$};
\node[above right] at (t_1) {$t_1$};
\node[above right] at (t_2) {$t_2$};
\node[above right] at (4.8,3-4/3) {$t$};
\draw [extended line=2cm, dashed] (Q0) -- (Q2);
\node[below] at (6,2.2) {(1)};
\node[below] at (6.8,2.4) {(2)};
\draw [extended line=2cm, dashed]  (P0) -- (P2);
\path [name path=Q0Q2](Q0) -- (Q2);
\path [name path=P0P2](P0) -- (P2);
\draw[dotted] (1.5,4) -- (3,4);
\node [fill=black,inner sep=1pt,label=225:$T$] at (1.5,4) {};
\path [name intersections={of=C0 and D0,by=Qin}];
\end{tikzpicture}
\caption{The lines (1) and (2) are the characteristic curves from $Q^{in}_0$ and $P^{in}_0$. T is the time of formation of the shock wave.}
\label{geometric proof quasi WW_introduction_sketch of the proof_figure 1}
 \end{figure}
\\ 
Then $u^0 $ and $v^0$ are chosen as a high frequency compactly supported ansatz depending on $(\lambda,\eps) $:
$$ u^0(x)=\lambda^{\frac{1}{2}-s}\omega(\lambda x),  \ \ v^0(x) =u^0(x) +\eps \omega(x), \ \ \text{with} \ \ \omega \in C^\infty_0,$$
where $\eps $ represents a change in the initial speed of transport,
 and $(\eps ,\lambda )$ verify:
 \begin{itemize}
 \item $\eps\rightarrow 0$ insuring that the difference in the $H^s$ norm of the sequences of initial data goes to $0$.
 \item $\lambda \rightarrow +\infty$ is the usual ansatz parameter hypothesis.
 \item $\eps \lambda \rightarrow +\infty$ insuring that the change of transport speed is enough to have disjoint supports at positive time.
 \end{itemize} 
 \begin{figure}[h!]
\begin{tikzpicture}[xscale=1,yscale=1,dot/.style={circle,inner sep=1pt,fill,label={#1},name=#1},
 extended line/.style={shorten >=-#1,shorten <=-#1},
 extended line/.default=1cm]
 \draw [->] (-3,0) -- (3,0);
\draw [->] (0,0) -- (0,4);
\node[above] at (0,4) {y};
\node[below right] at (3,0) {x};
\path [name path=C0] (-3,0) to [out=0,in=220] (-0.3,3.6)
 	 to [out=40,in=180] (0,4) 
        to [out=0,in=180]  (0.3,3.6)
        to [out=0,in=180] (3,0) ;
 \draw [thick] (-3,0) to [out=0,in=220] (-0.3,3.6)
 	 to [out=40,in=180] (0,4) 
        to [out=0,in=140]  (0.3,3.6)
        to [out=-40,in=180]  node[right] {$v^0$}  (3,0) ;
 \path [name path=C1] (-0.3,0) to [out=0,in=180] (0,0.4) 
        to [out=0,in=180]  (0.3,0) ;
 \draw [thick] (-0.3,0) to [out=0,in=180] (0,0.4) node[above left] {$u^0$}
        to [out=0,in=180]  (0.3,0) ;
   \node [fill=black,inner sep=1pt,label=0:$\sim \lambda^{\frac{1}{2}-s}$] at (0,0.4) {};
      \node [fill=black,inner sep=1pt,label=0:$\sim \eps$] at (0,4) {};
 \coordinate (0) at (0,0);
  \node[below ] at (0) {$0$};
 \end{tikzpicture}
\caption{Graph of the ansatz.}
\label{geometric proof quasi WW_introduction_sketch of the proof_figure 2}
 \end{figure}
Now if we put $\chi $ and $\tilde{\chi }$ to be the characteristic flows associated to the solutions $u^0 $ and $v^0$ then:
  \begin{align*}
 (u -v )(t,x)&=u^0 (\chi (t,x)^{-1})-v^0 (\tilde{\chi} (t,x)^{-1})\\
 &=u^0 (\chi (t,x)^{-1})-u^0 (\tilde{\chi} (t,x)^{-1})+O_{H^s}(\eps ).
  \end{align*}
 
 Then using the compactly supported property of $u^0 $ and the change of speed we prove that $u^0 (\chi (t,x)^{-1})$ and $u^0 (\tilde{\chi} (t,x)^{-1})$ have disjoint supports which is illustrated by Figure \ref{geometric proof quasi WW_introduction_sketch of the proof_figure 3}.
  \begin{figure}[h]
\begin{tikzpicture}[xscale=1,yscale=0.9,dot/.style={circle,inner sep=1pt,fill,label={#1},name=#1},
 extended line/.style={shorten >=-#1,shorten <=-#1},
 extended line/.default=1cm]
 \draw [->] (-8,0) -- (-2,0);
\draw [->] (-5,0) -- (-5,4);
\node[above] at (-5,4) {y};
\node[below right] at (-2,0) {x};
\path [name path=C0] (-8,0) to [out=0,in=220] (-5.3,3.6)
 	 to [out=40,in=180] (-5,4) 
        to [out=0,in=140]  (-4.7,3.6)
        to [out=-40,in=180]  (-2,0) ;
 \draw [thick] (-8,0) to [out=0,in=220] (-5.3,3.6)
 	 to [out=40,in=180] (-5,4) 
        to [out=0,in=140]  (-4.7,3.6)
        to [out=-40,in=180]  node[right] {$v^0$}  (-2,0) ;
 \path [name path=C1] (-5.3,0) to [out=0,in=180] (-5,0.4) 
        to [out=0,in=180]  (-4.7,0) ;
 \draw [thick] (-5.3,0) to [out=0,in=180] (-5,0.4) node[above left] {$u^0$}
        to [out=0,in=180]  (-4.7,0) ;
   \node [fill=black,inner sep=1pt,label=0:$\sim \lambda^{\frac{1}{2}-s}$] at (-5,0.4) {};
      \node [fill=black,inner sep=1pt,label=0:$\sim \eps$] at (-5,4) {};
 \coordinate (0) at (-5,0);
  \node[below ] at (0) {$0$};
   \draw [->] (-2,-3) -- (6,-3);
\draw [->] (1,-4) -- (1,0);
\node[above] at (1,0) {y};
\node[right] at (6,-3) {x};
 \coordinate (t) at (1,-3);
  \node[below left] at (t) {$t$};
 \path [name path=C2] (1.1,-3) to [out=0,in=180] (1.4,-2.6) 
        to [out=0,in=180]  (1.7,-3) ;
 \draw [thick] (1.1,-3) to [out=0,in=180] (1.4,-2.6) node[right] {$u^0\circ \chi$} to  [out=0,in=100] (1.5,-2.85)
        to [out=-80,in=180]  (1.7,-3) ;
\path [name path=C3] (3.7,-3) to [out=0,in=180] (4,-2.6) 
        to [out=0,in=180]  (4.3,-3) ;
 \draw [thick] (4.7,-3) to [out=0,in=180] (5,-2.6) node[right] {$u^0\circ \tilde{\chi}$} to  [out=0,in=100] (5.1,-2.9)
        to [out=-80,in=180]  (5.3,-3) ;
  \draw [extended line=2cm, thin,->]  (0) -- (t);
  \draw [extended line=2cm, dashed]  (0) -- (1.4,-3);
  \draw [extended line=2cm, dashed]  (0) -- (5,-3);
 \end{tikzpicture}
\caption{Transport of the ansatz.}
\label{geometric proof quasi WW_introduction_sketch of the proof_figure 3}
 \end{figure}
 We then prove that $\norm{u^0 (\chi (t,x)^{-1})}_{H^s}\geq c>0$ which finishes the proof of the non uniform continuity of the flow map.
 For the control in a weaker norm, that is the flow map cannot be $C^1(H^s(\d),C^0([0,T],H^{s-1+\eps}(\d)))$, we get it from the estimate $\norm{u^0 (\chi (t,x)^{-1})}_{H^{s-\mu}}\geq c\lambda^{-\mu}$.
 
\subsubsection{Quasi-linearity of problem \eqref{geometric proof quasi WW_study model pbm_subsec prereq cauchy pbm_eq1}}
Now if we adapt the proof to our current problem \eqref{geometric proof quasi WW_introduction_ theorem on dispersive Burgers eq_eq dispersive Burgers} we get: 
 \begin{align*}
 (u -v )(t,x)&=f (t,\chi (t,x)^{-1})-g(t, \tilde{\chi} (t,x)^{-1})\\
 &=f (t,\chi (t,x)^{-1})-f (t,\tilde{\chi} (t,x)^{-1})+O_{H^s}\big(\eps+t^2\eps\lambda^\alpha\big),
 \end{align*}
 where $f $ and $g $ are solutions to
  \begin{align} 
 \partial_t f+(\partial_x \abs{D}^{\alpha-1})^*  f=0 \label{geometric proof quasi WW_introduction_sketch of the proof_equation on f}\\
 \partial_t g+\widetilde{(\partial_x \abs{D}^{\alpha-1})}^* g=0 \label{geometric proof quasi WW_introduction_sketch of the proof_equation on g} 
\end{align}
and $(\cdot)^*  $ and $\widetilde{(\cdot)}^* $ are the change of variables by the characteristic flows defined for a symbol $a$ by
\[
\op(a)^*(u\circ \chi)=(\op(a)u)\circ \chi \text{ i.e } \op(a)^*(u)=(\op(a)[u\circ \chi^{-1}])\circ \chi,
\]
and analogously for $\widetilde{(\cdot)}^* $, which we prove that they are well posed in Appendix \ref{geometric proof quasi WW_Appendix energy est of pulled back eq}. 
  
  The first immediate problem we face is the extra term $t^2 \eps\lambda ^{\alpha} $ which diverges, to remedy this we give up control of the flow map punctually in time and use a conveniently chosen sequence of small time $(\tau)$ to control $\tau^2 \eps \lambda ^{\alpha}$:
  $$\tau \rightarrow 0,\text{ but still insure } \lambda \eps \tau \rightarrow +\infty.$$
  
  The second, deeper problem we face is that we lose control over the support of the solution. 
  Indeed  \eqref{geometric proof quasi WW_introduction_sketch of the proof_equation on f} and \eqref{geometric proof quasi WW_introduction_sketch of the proof_equation on g} are obtained by pull-back of the linear equation
\begin{equation}\label{geometric proof quasi WW_introduction_sketch of the proof_linear equation}
  \partial w +\partial_x \abs{D}^{\alpha-1} w=0
  \end{equation}
  which is a non local-dispersive equation that is expected to disperse the support of the solution and the $L^\infty$ norm. This phenomena is thus expected to oppose the phenomena illustrated by the previous Figures \eqref{geometric proof quasi WW_introduction_sketch of the proof_figure 1} and \eqref{geometric proof quasi WW_introduction_sketch of the proof_figure 2} and indeed does so for the KdV equation on $\r$.\\
  
  To remedy this, the idea is not to use $u^0 $ and $v^0 $ as initial data but by profiting of the time reversibility \footnote{This idea fundamentally depends on the local reversibility in time of the linearised equations and thus fails for the fractional Burgers equation.} of the equations use the backward in time solutions $u^1 $ and $v^1 $ defined by:
  \begin{align*}
  \begin{cases}
  \omega \text{ solution of \eqref{geometric proof quasi WW_introduction_sketch of the proof_linear equation}},\\
  \omega(\tau,\cdot)=u^0,\\
  \omega(0,\cdot)=u^1,
  \end{cases}
    &\begin{cases}
  \omega' \text{ solution of \eqref{geometric proof quasi WW_introduction_sketch of the proof_linear equation}},\\
  \omega'(\tau,\cdot)=v^0,\\
  \omega'(0,\cdot)=v^1.
  \end{cases}
  \end{align*}
  
   This gives us:
 \[
 (u -v )(\tau,x)=u^0 (\chi (0,\tau,x))-u^0 (\tilde{\chi} (0,\tau,x))+O_{H^s}\big(\eps +\tau^2\eps \lambda^{\alpha}+\tau^2 \lambda^{\alpha-1}\big).
 \]

We then prove that this gives the desired result, in the threshold $\alpha \in [0,2[$, by proving analogously to the Burgers equation: $\norm{u^0 (\chi^{-1} (t,x))}_{H^s}\geq c>0$ and then using the compactly supported property of $u^0 $ and the change of speed we prove that $u^0 (\chi (0,\tau,x))$ and $u^0 (\tilde{\chi} (0,\tau,x))$ have disjoint supports.
\subsection{Acknowledgement}
I would like to express my sincere gratitude to my thesis advisor Thomas Alazard. 
I would also like to thank Claude Zuily for his insightful note on the Burgers equation that helped me understand the problem.

\section{Study of the model equation}
In this section we give a full proof of Theorem \ref{geometric proof quasi WW_introduction_ theorem on dispersive Burgers eq}.
\subsubsection{Prerequisites on the Cauchy Problems}\label{geometric proof quasi WW_study model pbm_subsec prereq cauchy pbm} 
For a real number $\alpha \in [0,2[$, we consider the Cauchy problem\footnote{Recall that $D=\op(\abs{\xi})$.}:
\begin{equation} \label{geometric proof quasi WW_study model pbm_subsec prereq cauchy pbm_eq1} 
\begin{cases} 
\partial_t u+u\partial_xu+\partial_x \abs{D}^{\alpha-1} u=0 \\
u(0,\cdot)=u_0(\cdot) \in H^s(\d), \ s>\frac{3}{2},
\end{cases}
\end{equation}
It is well known that the problem is well posed in Sobolev spaces, this can be summarized in the following Theorem:
\begin{theorem} \label{geometric proof quasi WW_study model pbm_subsec prereq cauchy pbm_thm cauchy pbm}
Consider two real numbers, $ s\in ]\frac{3}{2},+\infty[$ and $r>0$. Fix $u_0\in H^{s}(\d)$. 
Then there exists $T>0$, such that for all $v_0 \in \b(u_0,r) \subset H^{s}(\d)$, the problem \eqref{geometric proof quasi WW_study model pbm_subsec prereq cauchy pbm_eq1} with initial data $v_0$ has a unique solution $v \in C^0([0,T],H^{s}(\d))$, the map $v_0 \mapsto v$ is continuous from $\b(u_0,r)$ to $C^0([0,T],H^{s}(\d))$ and maps real functions into real functions.
Moreover we have the estimates:
\begin{equation} \label{geometric proof quasi WW_study model pbm_subsec prereq cauchy pbm_thm cauchy pbm_eq1} 
\forall 0 \leq \mu \leq s, \norm{v(t)}_{H^{\mu}(\d)} \leq C_\mu  \norm{v_0}_{H^{\mu}(\d)}. 
\end{equation}
Taking two different solutions $u,v$, assuming moreover $u_0 \in H^{s+1}(\d)$ then we have:
\begin{align} \label{geometric proof quasi WW_study model pbm_subsec prereq cauchy pbm_thm cauchy pbm_eq2} 
\norm{(u-v)(t)}_{H^{s}(\d)} &\leq C_s(\norm{(u,v)}_{H^s(\d)} ; t\norm{u}_{H^{s+1}(\d)})  \norm{u_0-v_0}_{H^{s}(\d)}
\end{align}
We will also need to remark that fixing the initial data at 0 is an arbitrary choice, that is all of the previous conclusions hold for the Cauchy problem defined for $t_0\leq T$:
\begin{equation} 
\begin{cases} 
\partial_t v+v\partial_xv+\partial_x \abs{D}^{\alpha-1} v=0 \\
v(t_0,\cdot)=v_0(\cdot) \in H^s(\d), \ s>\frac{3}{2}.
\end{cases}
\end{equation}
\end{theorem}

\begin{remark}\label{geometric proof quasi WW_study model pbm_subsec prereq cauchy pbm_remark on link to Burgers eq}
Note that the previous Theorem holds for the Cauchy problem associated to the Burgers equation:
\begin{equation} \label{geometric proof quasi WW_study model pbm_subsec prereq cauchy pbm_remark on link to Burgers eq_eq1} 
\begin{cases} 
\partial_t u+u\partial_xu=0 \\
u(0,\cdot)=u_0(\cdot) \in H^s(\d), \ s>\frac{3}{2},
\end{cases}
\end{equation}
 Though we have some extra estimates in H\"older type spaces:
 \begin{align}\label{geometric proof quasi WW_study model pbm_subsec prereq cauchy pbm_remark on link to Burgers eq_eq2} 
 \forall 0 \leq k < s-\frac{1}{2}, \norm{u(t)}_{W^{k,\infty}(\d)} &\leq C_k  \norm{u_0}_{W^{k,\infty}(\d)},
  \intertext{Taking two different solution $u,v$, assuming moreover $u_0 \in H^{s+1}(\d)$ then we have:}
 \forall 1 \leq k < s-\frac{1}{2},\norm{(u-v)(t)}_{W^{k,\infty}(\d)} &\leq  \norm{u_0-v_0}_{W^{k,\infty}(\d)}e^{C_k \int_0^t \norm{u(s)}_{W^{k+1,\infty}(\d)}ds}.\nonumber
 \end{align}
\end{remark}

\begin{remark}\label{geometric proof quasi WW_study model pbm_subsec prereq cauchy pbm_remark on the scaling}
We compute the change of scale for the evolution PDE \eqref{geometric proof quasi WW_study model pbm_subsec prereq cauchy pbm_eq1}:
 \[
 u_0 \mapsto \lambda^{\alpha-1}u_0(\lambda x)\]
  gives the solution
   \[ \lambda^{\alpha-1}u(\lambda^\alpha t,\lambda x).\]
    Thus giving the critical scaling in Sobolev spaces: $s_c=1+\frac{1}{2}-\alpha$, thus we prove quasi-linearity in the subcritical regime of the problem.
\end{remark}
\begin{notation} In order not to be confused with the pull-back symbol, henceforth the conjugate of a symbol $a$ will be written as $a^\top$.
\end{notation}
As the linearized equation is a hyperbolic pseudo-differential equation we recall the result on the Cauchy problem associated to this type of equations:
\begin{theorem}\label{geometric proof quasi WW_study model pbm_subsec prereq cauchy pbm_thm cauchy pbm linear pseudo}
Consider $(a_t)_{t\in \r}$ a family of symbols in $S^\beta(\d^d)$,$\beta \in \r$, such that\\
 $t \mapsto a_t$ is continuous and bounded from $\r$ to $S^\beta(\d^d)$
  and such that $ \RE(a_t)=\frac{a_t+a_t^\top}{2}$ is bounded in $S^0(\d^d)$, and take $T>0$.
   Then for all $s \in \r$, $u_0 \in H^s(\d^d)$ and $f \in  C^0([0,T];H^s(\d^d))$ the Cauchy problem:
 \begin{equation}
\begin{cases}
 \partial_t u+\op(a)u=f\\
 \forall x \in \d^d, u(0,x)=u_0(x)
\end{cases}
\end{equation}
has a unique solution $u \in C^0([0,T];H^s(\d^d))\cap C^1([0,T];H^{s-\beta}(\d^d))$ which verifies the estimates:
\[\norm{u(t)}_{H^s(\d^d)}\leq e^{Ct}\norm{u_0}_{H^s(\d^d)}+2\int_0^te^{C(t-t')}\norm{f(t')}_{H^s(\d^d)}dt',\]
where C depends on a finite symbol semi-norm of $ \RE(a_t) $.
We will also need to remark that fixing the initial data at 0 is an arbitrary choice.
More precisely, $\forall 0 \leq t_0\leq T$ and all data $u_0 \in H^s(\d^d)$  the Cauchy problem:
 \begin{equation} 
\begin{cases}
 \partial_t u+\op(a)u=f\\
 \forall x \in \d^d, u(t_0,x)=u_0(x)
\end{cases}
\end{equation}
has a unique solution $u \in C^0([0,T];H^s(\d^d))\cap C^1([0,T];H^{s-\beta}(\d^d))$ which verifies the estimate:
\[\norm{u(t)}_{H^s(\d^d)}\leq e^{C\abs{t-t_0}}\norm{u_0}_{H^s(\d^d)}+2\abs{\int_{t_0}^te^{C(t-t')}\norm{f(t')}_{H^s(\d^d)}dt'}.\]
\end{theorem}

\subsection{Proof of Theorem \ref{geometric proof quasi WW_introduction_ theorem on dispersive Burgers eq}}
To prove the theorem we will show that there exists a positive constant $C$ and two sequences $(u^\lambda_{\eps,\tau})$ and $(v^\lambda_{\eps,\tau})$ solutions of \ref{geometric proof quasi WW_introduction_ theorem on dispersive Burgers eq_eq dispersive Burgers} on $[0,1]$ such that for every $t\in [0,1]$,
\[
\sup_{\lambda,\eps,\tau} \norm{u^\lambda_{\eps,\tau}}_{L^\infty([0,1],H^s(\d))}+
\norm{v^\lambda_{\eps,\tau}}_{L^\infty([0,1],H^s(\d))}\leq C,
\]
$(u^\lambda_{\eps,\tau})$ and $(v^\lambda_{\eps,\tau})$ satisfy initially
\[
\lim_{\substack{\lambda \rightarrow +\infty \\ \eps,\tau\rightarrow 0}} \norm{u^\lambda_{\eps,\tau}(0,\cdot)-v^\lambda_{\eps,\tau}(0,\cdot)}_{H^s(\d)}=0,
\]
but,
\[
\liminf_{\substack{\lambda \rightarrow +\infty \\ \eps,\tau\rightarrow 0}} \norm{u^\lambda_{\eps,\tau}-v^\lambda_{\eps,\tau}}_{L^\infty([0,1],H^s(\d))}\geq c>0.
\]

 Considering a weaker control norm we want to get, for all $\delta>0$,
 \[
\liminf_{\substack{\lambda \rightarrow +\infty \\ \eps,\tau\rightarrow 0}} \frac{\norm{u^\lambda_{\eps,\tau}-v^\lambda_{\eps,\tau}}_{L^\infty([0,1],H^{s-1+(\alpha-1)^+ +\delta}(\d))}}{\norm{u^\lambda_{\eps,\tau}(0,\cdot)-v^\lambda_{\eps,\tau}(0,\cdot)}_{H^s(\d)}}=+\infty.
\]
\subsubsection{Definition of the Ansatz}
\begin{itemize}
\item For $\d=\r$, take $\omega \in C_0^{\infty}(\r), \omega(x)=1  \text{ if } \abs{x}\leq \frac{1}{2}, \omega(x)=0 \text{ if } \abs{x} \geq 1$.
\item For $\d=\t$, we see functions on $\t=\r/2\pi \z$ as $2\pi$ periodic function on $\r$ and we take $\omega\in C_0^{\infty}(\t)$ as the periodic extension of the function defined above.
\end{itemize}

 Let $(\lambda ,\eps )$ be two positive real sequences such that
\begin{equation} \label{geometric proof quasi WW_study model pbm_subsec def ansatz_proof eq 1} 
  \lambda  \rightarrow + \infty, \  \eps  \rightarrow  0, \   \lambda \eps  \rightarrow  + \infty.
\end{equation}
Put 
\begin{itemize}
\item for $\d=\r$, \[u ^0(x)=\lambda ^{\frac{1}{2}-s}\omega(\lambda x), \ \ v ^0(x)=u ^0(x)+ \eps  \omega(x), \]
\item for $\d=\t$, $u^0$ and $v^0$ as the periodic extensions of the functions defined above.
\end{itemize}
Take $t_0>0$ smaller than a harmless constant which will be fixed later, and $(\tau),
 0 <\tau \leq t_0$ and $\tau \rightarrow 0$. \\
Now let $l, l'  $ be the solutions to the Cauchy problem on $[0,t_0]$:
\begin{align*}
  \begin{cases}
  \partial_t l +\partial_x \abs{D}^{\alpha-1} l=0,\\
  l(\tau,\cdot)=u^0,
  \end{cases}
    &\begin{cases}
  \partial_t l' +\partial_x \abs{D}^{\alpha-1} l'=0,\\
 l'(\tau,\cdot)=v^0.
  \end{cases}
  \end{align*}
Put $u ^1(x)=l(0,x)$ and define analogously $v ^1(x)=l'(0,x)$.\\

Define $u $ and $v $ as the solution given by Theorem \ref{geometric proof quasi WW_study model pbm_subsec prereq cauchy pbm_thm cauchy pbm}with initial data $u ^1$ and $v ^1$ on the intervals $[0,T]$ and $[0,T']$. 
Taking $0<\delta<s-\frac{3}{2}$, $u ^0$ and $v ^0$ are uniformly bounded in $H^{\frac{3}{2}+\delta}(\d) $
 when $\lambda  \rightarrow + \infty$ and thus by Theorem \ref{geometric proof quasi WW_study model pbm_subsec prereq cauchy pbm_thm cauchy pbm linear pseudo}, $u ^1$ and $v ^1$ are also
uniformly bounded in $H^{\frac{3}{2}+\delta}(\d) $
 and thus by the Sobolev injection Theorem they are bounded in $\dot{W}^{1,\infty}(\d) $. 
Thus we can take a uniform $0<T$ on which all the solutions are well defined and we take $0<t_0\leq T$
 \footnote{Heuristically, if the existence time of the solution with initial data $\omega$ is $[0,T]$ 
 then the existence time of the solution with initial data $u_0$ is $\sim T \lambda ^{s-\frac{3}{2}}$ which tends to infinity with $\lambda$, 
 thus we are "dilating" the time scale of the problem with initial data $\omega$ and  "zooming" for short time and in the $\dot{H}^s(\d) $ norm. 
 In this part of the evolution, we prove that
the Burgers transport term is more important and gives this quasi-linear character to the PDE.}.

\subsubsection{Change of variables by transport}

Put 
\[
\begin{cases}
	\frac{d}{dt}\chi (t,s,x)=u(t,\chi (t,s,x)) \\
	\chi (s,s,x)=x
\end{cases},
\]
and define analogously $\tilde{\chi} $ from $v $.

We recall that from the Cauchy-Lipschitz Theorem we have  as $u ^0$ and $v ^0$ are $H^{+\infty}(\d) $ functions, 
then $u ^1$, $v ^1$ are  $H^{+\infty}(\d) $ and $u $ and $v $ are  $H^{+\infty}(\d) $ with respect to the $x$ variable thus $\chi ,\tilde{\chi} \in C^1([0,T]^2,C^{\infty} )$.
And they are both diffeomorphisms in the $x$ variable.\\

By the estimate \eqref{geometric proof quasi WW_study model pbm_subsec prereq cauchy pbm_thm cauchy pbm_eq1} $u$ and $v $ are uniformly bounded in $\dot{W}^{1,\infty}(\d) $ because
their Sobolev norms are dominated by those of $u ^1$ and $v ^1$ thus by those of $u ^0$ and $v ^0$ by Theorem \ref{geometric proof quasi WW_study model pbm_subsec prereq cauchy pbm_thm cauchy pbm linear pseudo}.  
 By classic manipulations of ODEs we get the estimates:
\begin{equation}\label{geometric proof quasi WW_study model pbm_subsec change of var by trans_proof eq 1}
\begin{cases}
\exists C>0, \forall t',t \leq t_0, \forall x,  C^{-1}\leq \abs{\partial_x \chi (t,t',x)}\leq C \\
 \forall 2 \leq k< \lfloor s-\frac{1}{2}\rfloor , \norm{\partial^k_x \chi (t,t',x)}_{L^\infty } \leq C \norm{u}_{W^{k,\infty} }. 
 \end{cases}
\end{equation}
Analogous estimates hold for $\tilde{\chi} $ using $v $.\\
The classic transport computation reads:
\[
\begin{cases}
\partial_t(u (t, \chi (t,0,x)))=(\partial_t u)(t, \chi (t,0,x))+\partial_t( \chi (t,0,x) )(\partial_x u)(t, \chi (t,0,x)) \\
		\hspace{3cm}			=-(\partial_x \abs{D}^{\alpha-1} u)(t, \chi (t,0,x))\\
	\hspace{3cm}		=-(\partial_x \abs{D}^{\alpha-1})^*  (u(t, \chi (t,0,x))),\\
 u (0, \chi (0,0,x))=u (0,x)=u^1 (x).
 \end{cases}					
\]
where $(\cdot)^*  $ is the change of variables by $\chi (t,0,x)$ as presented in Theorem \ref{paracomposition_section Pull-back of pseudo and para- differential operators_theorem change of variable pseudo}.\\
Thus if we put $f $ the solution to the following Cauchy problem, which is well posed by Appendix \ref{geometric proof quasi WW_Appendix energy est of pulled back eq}:
 \begin{equation} \label{geometric proof quasi WW_study model pbm_subsec change of var by trans_proof eq 1} 
\begin{cases}
 \partial_t f+(\partial_x \abs{D}^{\alpha-1})^*  f=0\\
 \forall x \in \d, f(0,x)=u ^1(x)
\end{cases}
\end{equation}
 we get:
\begin{align} \label{geometric proof quasi WW_study model pbm_subsec change of var by trans_proof eq 2} 
u (t,\chi (t,0,x))=f (t,x)  \Leftrightarrow  u (t,x)=f (t, \chi (0,t,x)).
\end{align}
Analogously, if we put $g $ the solution to the well posed Cauchy problem,
\begin{equation} \label{geometric proof quasi WW_study model pbm_subsec change of var by trans_proof eq 3} 
\begin{cases}
 \partial_t g+\widetilde{(\partial_x \abs{D}^{\alpha-1})}^* g=0\\
 \forall x \in \d, g(0,x)=v ^1(x)
\end{cases}
\end{equation}
where $\widetilde{(\cdot)}^* $ is the change of variables by $\tilde{\chi} (t,0,x)$, we get
\begin{align} \label{geometric proof quasi WW_study model pbm_subsec change of var by trans_proof eq 4} 
v (t,x)=g (t, \tilde{\chi} (0,t,x))  \Leftrightarrow  v (t,\tilde{\chi} (t,0,x))=g (t, x).
\end{align}

Returning to the ODEs defining $\chi $ and $\tilde{\chi} $, for a generic initial time $0\leq t' \leq t_0$ we get:
\begin{align} \label{geometric proof quasi WW_study model pbm_subsec change of var by trans_proof eq 5} 
\begin{cases}
\chi (t,t',x)=x+\int_{t'}^t f (s,x) ds, \\
\tilde{\chi} (t,t',x)=x+\int_{t'}^t g (s,x) ds,\\
\end{cases}
\end{align}

\begin{proposition}\label{geometric proof quasi WW_study model pbm_subsec change of var by trans_proof prop 1}
There exists $C>0$ independent of $(\tau,\eps,\lambda)$ such that:\\
$\forall h \in H^s(\d),\forall (t,t')\leq t_0,$
\[ C^{-1}\norm{h}_{H^s}\leq\norm{h\circ \chi (t,t',x) }_{H^s}\leq C\norm{h}_{H^s}, \]
\[C^{-1}\norm{h}_{H^s}\leq\norm{h\circ \tilde{\chi} (t,t',x) }_{H^s}\leq C\norm{h}_{H^s}.\]
\end{proposition}

\begin{proof}
We will start by proving the upper bound for the estimate on the composition with $\chi$.
As $u$ is bounded in $(\tau,\eps,\lambda)$ on $C([0,T],H^s(\d))$ then there exists a unique solution $H\in C([0,T],H^s(\d))$ to
\[
\begin{cases}
\partial_t H+u\partial_x H=0,\\
H(t,x)=h(x),
\end{cases}
\]
and $H$ is bounded in $(\tau,\eps,\lambda)$ on $C([0,T],H^s(\d))$. The desired bound come from the fact that we have the explicit formula for $H$:
\[H(t',x)=h\circ \chi (t,t',x).\]
Now to get the lower bound it suffices to write by the upper bound computations:
\begin{align*}
\norm{h}_{H^s }&=\norm{h\circ \chi (t,t',x) \circ \chi (t',t,x)}_{H^s}\\
			&\leq C \norm{h\circ \chi (t,t',x)}_{H^s}.\\			
\end{align*}
We get analogously the estimates on the composition with $\tilde{\chi}$.
\end{proof}

\subsubsection{Key Lemma and proof of the Theorem}

\begin{lemma} \label{geometric proof quasi WW_study model pbm_subsec key lem_lem 1}
Take $\eps'>0$ sufficiently small, as $0\leq\alpha< 2$ we can find a sequence $(\tau,\eps,\lambda)$ such that:
\begin{align}\label{geometric proof quasi WW_study model pbm_subsec key lem_lem 1_eq 1}
\begin{cases}
 \tau\rightarrow 0,\\
  \eps \rightarrow 0,\\
\lambda \rightarrow +\infty,
\end{cases}
&
\begin{cases}
 \tau \lambda ^{(\alpha-1)^+}\rightarrow0,\\
\eps^{-1}\lambda^{-1+(\alpha-1)^++\eps'}\rightarrow +\infty,\\
\lambda \eps \tau \rightarrow +\infty,\\
\lambda^{\alpha} \eps \tau^2 \rightarrow 0.
\end{cases}
\end{align}
Then there exists $c>0$ such that:
\begin{enumerate} 
\item For $\nu \geq 0$ and $\forall (\tau,\eps,\lambda), \norm{u ^0\circ \chi (0,\tau,x)-u ^0\circ \tilde{\chi} (0,\tau,x)}_{H^{s-\nu}}>c\lambda^{-\nu}. $
\item For $\nu \geq 0$
 \begin{multline}
  u (\tau,x)-v (\tau,x)=u ^0\circ \chi (0,\tau,x)-u ^0\circ \tilde{\chi} (0,\tau,x)\\+ O_{H^{s-\nu}}\big(\eps+(\tau\lambda^{(\alpha-1)^+}+\tau\lambda^{\alpha-(s-\nu)})\lambda^{-\nu}+\tau^2\eps\lambda^{\alpha-\nu} \big) .
  \end{multline}
\end{enumerate}
\end{lemma}
We will now show that this Lemma implies the Theorem \ref{geometric proof quasi WW_introduction_ theorem on dispersive Burgers eq}.
We have by combining the estimates $(1)$ and $(2)$ for $\nu=s$:
\[
\forall (\tau,\eps,\lambda), \norm{ u (\tau,x)-v (\tau,x)}_{H^s }>\frac{c}{2}>0 \text{ thus } \sup_{\tau,\eps,\lambda} \norm{ u (\tau,x)-v (\tau,x)}_{H^s }>\frac{c}{2}>0.
\]
Also by Theorem \ref{geometric proof quasi WW_study model pbm_subsec prereq cauchy pbm_thm cauchy pbm linear pseudo}: 
\[
\exists C>0,\norm{ u ^1(x)-v ^1(x)}_{H^s }\leq C\eps, \text{ thus } \norm{ u ^1(x)-v ^1(x)}_{H^s } \rightarrow 0,
\] 
which gives the non uniform continuity in the desired norms.\\
Now for the control in a weaker norm we write:
\[ \frac{\norm{ u (\tau,x)-v (\tau,x)}_{H^{s -1+(\alpha-1)^++\eps'}}}{\norm{ u ^1(x)-v ^1(x)}_{H^s }}\geq c\eps^{-1}\lambda^{-1+(\alpha-1)^++\eps'}\rightarrow +\infty, \]
which gives the desired result.
\subsubsection{Proof of point 1 of Lemma \ref{geometric proof quasi WW_study model pbm_subsec key lem_lem 1}}
 We first prove that there exists $c>0$ such that
 $\norm{u ^0\circ \chi (0,\tau,x)}_{H^{s-\nu} }>c\lambda^{-\nu}$, indeed by Proposition \ref{geometric proof quasi WW_study model pbm_subsec change of var by trans_proof prop 1} and change of variable:
 \begin{equation}\label{geometric proof quasi WW_study model pbm_subsec proof key lem point 1_eq 1}
 \norm{u ^0\circ \chi (0,\tau,x)}_{H^{s-\nu} }\geq C^{-1}\norm{u ^0}_{H^{s-\nu} } \geq C^{-1}\lambda^{-\nu}\norm{\omega}_{H^{s-\nu} }.
 \end{equation}
Now we will show that $u ^0\circ \chi (0,\tau,x)$ and $u ^0\circ \tilde{\chi} (0,\tau,x)$ have disjoint supports
 which will suffice to conclude given \eqref{geometric proof quasi WW_study model pbm_subsec proof key lem point 1_eq 1}.
 Put $y=\chi (0,\tau,x)$, thus $x=\chi (\tau,0,y)$. On the support of $u ^0\circ \chi (0,\tau,x)$ we have:
 \begin{itemize}
\item If $\d=\r$,  $\lambda \abs{y}\leq 1$. 
  \item If $\d=\t$,  $\forall k\in \n,2\pi k-1\leq \lambda  \abs{y}\leq 2\pi k+1$. 
  \end{itemize}
 We compute by the Taylor formula, since $x=\tilde{\chi} (\tau,0,y)$:
\begin{align}\label{geometric proof quasi WW_study model pbm_subsec proof key lem point 1_eq 2}
\tilde{\chi} (0,\tau,x)&=\tilde{\chi} (0,\tau,\tilde{\chi} (\tau,0,y))\\
&+(\chi (\tau,0,y)-\tilde{\chi} (\tau,0,y)) \int_0^1 \partial_y\tilde{\chi} (0,\tau,r \chi (\tau,0,y)+(1-r)\tilde{\chi} (\tau,0,y)) dr \nonumber \\
				&=y+(\chi (\tau,0,y)-\tilde{\chi} (\tau,0,y)) \int_0^1 \partial_y\tilde{\chi} (0,\tau,r \chi (\tau,0,y)+(1-r)\tilde{\chi} (\tau,0,y)) dr. \nonumber
\end{align}
First by \eqref{geometric proof quasi WW_study model pbm_subsec change of var by trans_proof eq 5},
\begin{multline*}
\partial_y\tilde{\chi} (0,\tau,r \chi (\tau,0,y)+(1-r)\tilde{\chi} (\tau,0,y))\\
=1+\int_0^{\tau} \partial_y [g (t,r\chi (\tau,0,y)+(1-r)\tilde{\chi} (\tau,0,y))] dt.
\end{multline*}
Thus by estimates of Theorem \ref{geometric proof quasi WW_Appendix energy est of pulled back eq_theorem on para eq}, taking $0<\delta<s-\frac{3}{2}$\footnote{Recall the notation $O_{\norm{ \ }}$ in \ref{paracomposition_section Notations and functional analysis}.}:
\begin{align*}
\partial_y\tilde{\chi} (0,\tau,r \chi (\tau,0,y)+(1-r)\tilde{\chi} (\tau,0,y))&=1+O_{L^\infty}\big(\tau[1+\norm{v^1 }_{H^{\frac{3}{2}+\delta} }+\norm{u^1 }_{H^{\frac{3}{2}+\delta} }]\big)\\
&=1+O_{L^\infty}(\tau),
\end{align*}
Which gives
\begin{equation}\label{geometric proof quasi WW_study model pbm_subsec proof key lem point 1_eq 3}
 \int_0^1 \partial_y\tilde{\chi} (0,\tau,r \chi (\tau,0,y)+(1-r)\tilde{\chi} (\tau,0,y)) dr=1+O_{L^\infty}\big(\tau\big).
\end{equation}
Now we estimate $\chi (\tau,0,y)-\tilde{\chi} (\tau,0,y)$, by \eqref{geometric proof quasi WW_study model pbm_subsec change of var by trans_proof eq 5} :
\begin{equation}\label{geometric proof quasi WW_study model pbm_subsec proof key lem point 1_eq 4}
\chi (\tau,0,y)-\tilde{\chi} (\tau,0,y)=\int_0^{\tau} f (t,y)-g (t,y)dt.\\
\end{equation}
Taking $0<\delta<s-\frac{3}{2}$, by estimates of Theorem \ref{geometric proof quasi WW_Appendix energy est of pulled back eq_theorem on para eq}:
\begin{align*}
f (t,y)&=f (0,y)+\int_0^t\partial_tf (r,y)dr=u^1 (y)+tO_{L^\infty}(\norm{u^1 }_{H^{\frac{1}{2}+\alpha+\delta} })\\&=u^1 (y)+O_{L^\infty}\big(t\eps\big).
\end{align*}
Analogously we get:
\[g (t,y)=v^1 (y)+O_{L^\infty}\big(t\eps\big).\]

Consider $\mu$ the solution to the Cauchy problem:
\begin{equation} \label{geometric proof quasi WW_study model pbm_subsec proof key lem point 1_eq 5} 
\begin{cases}
 \partial_t \mu +\partial_x \abs{D}^{\alpha-1} \mu =0\\
 \forall y \in \d, \mu(\tau,y)=\omega(y).
\end{cases}
\end{equation}
By definition:
\begin{align*}
u^1 (y)-v^1 (y)&=-\eps  \mu(0,y)=-\eps \omega(y)+\eps \int_{\tau}^0\partial_t \mu(t,y)dt\\&=-\eps \omega(y)+O_{L^\infty}\big(\eps \tau\big).
\end{align*}
Thus,
\[\chi (\tau,0,y)-\tilde{\chi} (\tau,0,y)=- \eps \tau\omega(y)+O_{L^{\infty}}\big(\tau^2\eps\big),\]
and finally we get in \eqref{geometric proof quasi WW_study model pbm_subsec proof key lem point 1_eq 2},
\begin{align*}			
	\tilde{\chi} (\tau,0,x)-y			
				&=- \eps \tau\omega(y)+O_{L^{\infty}}\big(\tau^2\eps\big).
\end{align*}
We get for $x \in \supp\ u ^0\circ \chi (0,\tau,\cdot) $:
\begin{itemize}
\item For $\d=\r$:
\[\lambda \abs{\tilde{\chi} (0,\tau,x)}\geq \tau\eps  \lambda -1+o_{L^\infty}\big(\tau\eps  \lambda\big) \geq 2 ,\]
 by hypothesis $  \tau\eps  \lambda \rightarrow+ \infty$, which gives the desired result.
\item For $\d=\t$ given an adequate choice of  $\tau, \eps $ and $\lambda $:
\[2 n\pi +1\leq \lambda \abs{\tilde{\chi} (0,\tau,x)} \leq 2(n+1)\pi -1,\]
Which again gives the desired result.
\end{itemize}

\subsubsection{Proof of point 2 of Lemma \ref{geometric proof quasi WW_study model pbm_subsec key lem_lem 1}}
We start by writing:
\begin{align*}
u (t,x)-v (t,x)&=f (t, \chi (0,t,x)) -g (t, \tilde{\chi} (0,t,x))\\
			&=\underbrace{f (t, \chi (0,t,x))-f (t, \tilde{\chi} (0,t,x))}_{(1)}+(f -g )(t, \tilde{\chi} (0,t,x)).
\end{align*}
Term $(1)$ resembles the main term in the usual transport estimates we used in point 1 of the Lemma
 \footnote{Like the ones used in proving the quasi-linearity of the Burgers equation.} 
 but with a main difference of $f $ being some dispersed data and not compactly supported. 
The main trick here was to construct from $u^0 , v ^0$ the defocused data in the past $u^1 ,v^1 $ and use this as the initial data for $f $ and $g $.
\begin{align*}
u (\tau,x)-v (\tau,x)
			&=u ^0\circ \chi (0,\tau,x)-u ^0\circ \tilde{\chi} (0,\tau,x)\\
			&+(f -u^0 )(\tau, \chi (0,\tau,x))-(f -u^0 )(\tau, \tilde{\chi} (0,t,x))+(f -g )(\tau, \tilde{\chi} (0,\tau,x)).
\end{align*}
The idea is then to see that by definition of $l$:
$l (\tau,x)=u ^0(x)$ and we get:
\begin{align*}
u (\tau,x)-v (\tau,x)
			&=u ^0\circ \chi (0,\tau,x)-u ^0\circ \tilde{\chi} (0,\tau,x)\\
			&+\underbrace{(f -l )(\tau, \chi (0,\tau,x))-(f -l )(\tau, \tilde{\chi} (0,t,x))}_{(1)}+\underbrace{(f -g )(\tau, \tilde{\chi} (0,\tau,x))}_{2}.
\end{align*}
We start by estimating $(1)$, by Proposition \ref{geometric proof quasi WW_study model pbm_subsec change of var by trans_proof prop 1}:
\begin{align*}
 \norm{(f -l )(\tau, \chi (0,\tau,x))}_{H^s} \leq C  \norm{(f -l )(\tau, \cdot)}_{H^s}.			
\end{align*}
Now $f -l $ solve: 
\begin{equation}\label{geometric proof quasi WW_study model pbm_subsec proof key lem point 2_eq1}
\begin{cases}
 \partial_t (f -l )+\partial_x \abs{D}^{\alpha-1} (f -l )=(\partial_x \abs{D}^{\alpha-1}-{\partial_x \abs{D}^{\alpha-1}}^* )f \\
 \forall x \in \d, (f -l )(0,x)=0.
\end{cases}
\end{equation}
Thus we have the estimates:
\begin{align} \label{geometric proof quasi WW_study model pbm_subsec proof key lem point 2_eq2} 
\norm{f -l (\tau,\cdot)}_{H^{\nu} }&\leq C \norm{(\partial_x \abs{D}^{\alpha-1}-{\partial_x \abs{D}^{\alpha-1}}^* )f }_{L^1([0,\tau],H^{\nu}) } \nonumber\\
					& \leq C\tau\norm{(\partial_x \abs{D}^{\alpha-1}-{\partial_x \abs{D}^{\alpha-1}}^* )f }_{L^\infty([0,\tau],H^{\nu}) }\nonumber\\
\intertext{By Theorem \ref{paracomposition_section Pull-back of pseudo and para- differential operators_theorem change of variable pseudo} and the Kato-Ponce commutator estimates \ref{paracomposition_Notations and functional analysis_KatoPonce commutator estimate},}
	\norm{f -l (\tau,\cdot)}_{H^{\nu} }& \leq C\tau\norm{(Id-D\chi (0,t,\chi (t,0,x)))}_{L^\infty}\norm{f}_{L^\infty([0,\tau],H^{\nu+\alpha})}\nonumber\\
					&+ C\tau\norm{Id-D\chi (0,t,\chi (t,0,x))}_{{L^\infty([0,\tau],W^{\nu,\infty}) }}\norm{ f }_{L^\infty([0,\tau],H^\alpha) }\nonumber\\
					&+C\tau \norm{Id-D\chi (0,t,\chi (t,0,x))}_{{L^\infty([0,\tau],W^{1,\infty}) }}\norm{ f }_{L^\infty([0,\tau],H^{\nu+\alpha-1}) },\nonumber\\
\intertext{Using Theorem \ref{geometric proof quasi WW_Appendix energy est of pulled back eq_theorem on regularization of elliptic evolution pde} and applying the Sobolev embedding Theorem with $\delta>0$ and $\delta<s-\alpha- \frac{1}{2}$, we get:} 
\norm{f -l (\tau,\cdot)}_{H^{\nu} }&\leq C (\tau\lambda^{(\alpha-1)^+}+\tau\lambda^{\alpha-\nu})\lambda^{\nu-s}.
\end{align}
Thus we get
\begin{equation*}
 \norm{(f -l )(\tau, \chi (0,\tau,x))}_{H^\nu }\leq C(\tau\lambda^{(\alpha-1)^+}+\tau\lambda^{\alpha-\nu})\lambda^{\nu-s}.
 \end{equation*}
 Analogously we get 
 \begin{equation*}
 \norm{(f -l )(\tau, \tilde{\chi} (0,\tau,x))}_{H^\nu }\leq C(\tau\lambda^{(\alpha-1)^+}+\tau\lambda^{\alpha-\nu})\lambda^{\nu-s},
 \end{equation*}
 which gives
  \begin{equation}\label{geometric proof quasi WW_study model pbm_subsec proof key lem point 2_eq3} 
 \norm{(1)}_{H^\nu }\leq C(\tau\lambda^{(\alpha-1)^+}+\tau\lambda^{\alpha-\nu})\lambda^{\nu-s}.
 \end{equation}
Now we estimate $(2)$ in the same manner, by Proposition \ref{geometric proof quasi WW_study model pbm_subsec change of var by trans_proof prop 1}:
\begin{align*}
 \norm{(f -g )(\tau, \tilde{\chi} (0,\tau,x))}_{H^\nu } \leq  \norm{(f -g )(\tau, \cdot)}_{H^\nu }
\end{align*}
$f -g $ solve: 
\begin{equation}\label{geometric proof quasi WW_study model pbm_subsec proof key lem point 2_eq4}
\begin{cases}
 \partial_t (f -g )+H{\partial_x \abs{D}^{\alpha-1}}^*  (f -g )+({\partial_x \abs{D}^{\alpha-1}}^* -\widetilde{\partial_x \abs{D}^{\alpha-1}}^* )g=0 \\
 \forall x \in \d, (f -g )(0,x)=(u^1 -v^1 )(x).
\end{cases}
\end{equation}
By Theorem \ref{paracomposition_section Pull-back of pseudo and para- differential operators_theorem change of variable pseudo} and the Kato-Ponce commutator estimates \ref{paracomposition_Notations and functional analysis_KatoPonce commutator estimate},
\begin{align*}
&\norm{f -g (\tau,\cdot)}_{H^{\nu} }\\ 
&\leq C \norm{u_1-v_1}_{H^{\nu}}\\
&+ C\tau\norm{(D\tilde{\chi} (0,t,\tilde{\chi} (t,0,x))-D\chi (0,t,\chi (t,0,x)))}_{L^\infty}\norm{g}_{L^\infty([0,\tau],H^{\nu+\alpha})}\nonumber\\
					&+ C\tau\norm{D\tilde{\chi} (0,t,\tilde{\chi} (t,0,x))-D\chi (0,t,\chi (t,0,x))}_{{L^\infty([0,\tau],W^{\nu,\infty}) }}\norm{ g }_{L^\infty([0,\tau],H^\alpha) }\nonumber\\
					&+C\tau \norm{D\tilde{\chi} (0,t,\tilde{\chi} (t,0,x))-D\chi (0,t,\chi (t,0,x))}_{{L^\infty([0,\tau],W^{1,\infty}) }}\norm{ g }_{L^\infty([0,\tau],H^{\nu+\alpha-1}) }.
\end{align*}
Using Theorem \ref{geometric proof quasi WW_Appendix energy est of pulled back eq_theorem on regularization of elliptic evolution pde} and taking $0<\delta<s-\alpha-\frac{1}{2}$:
\begin{align*}
\norm{f -g (\tau,\cdot)}_{H^{\nu} }&\leq C(\eps+\tau^2 \eps \lambda^{\alpha}\lambda^{\nu-s}+\eps\tau+\tau\lambda^{(\alpha-1)^+}\lambda^{\nu-s}),
\end{align*}
 which gives
  \begin{equation}\label{geometric proof quasi WW_study model pbm_subsec proof key lem point 2_eq5} 
 \norm{(2)}_{H^\nu }\leq   C(\eps+\tau^2 \eps \lambda^{\alpha}\lambda^{\nu-s}+\eps\tau+\tau\lambda^{(\alpha-1)^+}\lambda^{\nu-s}),
 \end{equation}
  finishing the proof of Lemma \ref{geometric proof quasi WW_study model pbm_subsec key lem_lem 1} and Theorem  \ref{geometric proof quasi WW_introduction_ theorem on dispersive Burgers eq}.

\section{A technical generalization} \label{geometric proof quasi WW_a tech gen}
The techniques used in the previous proof will be generalized but with some care in the estimates due to the non linearity we add to the dispersive term. This extra "complication" is crucial for our application to the Water Waves system.
\begin{theorem}\label{geometric proof quasi WW_a tech gen_main theorem}
Consider five real numbers $ \alpha\in[0, 2[$, $s \in ]2+\frac{d}{2},+\infty[$ and $T>0$ and $(\beta,k) \in \r^+$ verifying: 
\[
\begin{cases}
k\geq 1, \ \ \ \ \beta \leq \alpha,\\
\beta<(k+1)\alpha-2k+1.
\end{cases}
\]
Consider a elliptic skew symmetric\footnote{Recall the notation $a^\top$ for the adjoin of an operator $a$.} $C^1$ symbol $a:[0,T]\times H^s(\d^d) \rightarrow \Gamma^{\alpha}_1(\d^d)$,
\[\text{i.e such that }\RE(a_t)=\frac{a_t+a_t^\top}{2} \text{ is bounded in } \Gamma^0_1(\d^d),\]
\[\exists C>0, \forall (t,u,x)\in [0,T]\times H^s(\d^d)\times\d^d,\forall \xi, \abs{\xi}\geq \frac{1}{2}, \abs{a(t,u,x,\xi)}\geq C\abs{\xi}^{\alpha}.\]

Moreover we suppose the following bounds on the nonlinearity in $a$:
\begin{equation}\label{geometric proof quasi WW_a tech gen_main theorem_est Dg 1}
\forall \mu \in  \r, \forall g \in H^\mu(\d^d), 
\norm{T_{D_u a}g+ \overline{T_{D_u a}g}}_{H^\mu}\leq M_0^{\beta}(D_{u} a+\overline{D_{u} a}) \norm{g}_{H^{\mu+\beta}}.
\end{equation}
\begin{equation}\label{geometric proof quasi WW_a tech gen_main theorem_est Dg 2}
\forall (t,u), M_1^\alpha(a)\leq C(1+\norm{u}_{W^{1,\infty}}) \ , \ 
M_0^{\beta}(D_{u} a+\overline{D_{u} a}(u,\cdot))\leq C\norm{u}_{L^\infty}^{k-1}.
\end{equation}
Consider a $C^1$ function $V(t,x,u):[0,T]\times\d^d\times H^s(\d^d) \rightarrow H^s(\d^d;\r^d)$ and a function $F \in L^\infty\big([0,T], W^{1,\infty}\big(H^s(\d^d), H^s(\d^d)\big)\big)$. 
 
Suppose that the following hypothesis \ref{geometric proof quasi WW_a tech gen_main theorem_H1} is verified,
there exists $ \omega \in C^\infty_c(\d^d)$ supported in $\b(0,1)$ such that
\[ \forall (t,x) \in [0,T]\times \supp \omega,C_x^{-1}t \leq  \abs{ \int_0^t D_uV(s,x,0)[\omega(x)]ds} \leq C_xt.
\tag{\textbf{H1}}\label{geometric proof quasi WW_a tech gen_main theorem_H1}
\]
for a constant $C_x>0$ when $\omega(x)\neq 0$.

Fix $u_0 \in H^{s}(\d^d)$ and take $r>0$, then there exists $T>0$ such that for all $v_0$ in the ball $ \b(u_0,r) \subset  H^{s}(\d^d)$ the Cauchy problem:
 \begin{equation} \label{geometric proof quasi WW_a tech gen_main theorem_equation 1}
\begin{cases} 
\partial_t v+T_{V(t,x,v)}\cdot \nabla v+T_{a(t,v)} v=F(t,v) \\
v(0,\cdot)=v_0(\cdot),
\end{cases}
 \end{equation}
has a unique solution $v \in C([0,T], H^{s}(\d^d))$. Moreover $\forall R>0$, the flow map: 	\begin{align*}
										\b(0,R) \rightarrow &C([0,T],H^s(\d^d))\\
										v_0 \mapsto &v
										\end{align*}
				is not uniformly continuous.		
					
Considering a weaker control norm we get, for all $\eps>0$ the flow map:
	\begin{align*}
										\b(0,R) \rightarrow &C([0,T],H^{s-1+(\alpha-1)^+ +\eps}(\d^d))\\
										v_0 \mapsto &v
										\end{align*}
				is not $C^1$.	
\end{theorem}

In the proof of quasi-linearity of the water waves systems we will need the following slight generalization to systems given by the following corollary.

\begin{corollary}\label{geometric proof quasi WW_a tech gen_cor theorem sys}
Consider a positive integer $n \geq 1$ and five real numbers $ \alpha\in[0, 2[$, $s \in ]2+\frac{d}{2},+\infty[$, $T>0$ and $(\beta,k) \in \r^+$ verifying: 
\[
\begin{cases}
k\geq 1, \ \ \ \ \beta \leq \alpha,\\
\beta<(k+1)\alpha-2k+1.
\end{cases}
\]
Consider a $C^1$ skew symmetric elliptic symbol $a:[0,T]\times H^s(\d^d;\r^n) \rightarrow \Gamma^{\alpha}_1(\d^d;M_n(\r))$.

Moreover we suppose the following bounds on the nonlinearity in $a$:
\begin{equation}\label{geometric proof quasi WW_a tech gen_main theorem_est Dg 1}
\forall \mu \in  \r, \forall g \in H^\mu(\d^d), 
\norm{T_{D_u a}g+ \overline{T_{D_u a}g}}_{H^\mu}\leq M_0^{\beta}(D_{u} a+\overline{D_{u} a}) \norm{g}_{H^{\mu+\beta}}.
\end{equation}
\begin{equation}\label{geometric proof quasi WW_a tech gen_main theorem_est Dg 2}
\forall (t,u), M_1^\alpha(a)\leq C(1+\norm{u}_{W^{1,\infty}}) \ , \ 
M_0^{\beta}(D_{u} a+\overline{D_{u} a}(u,\cdot))\leq C\norm{u}_{L^\infty}^{k-1}.
\end{equation}

Consider a $C^1$ function
  $V(t,x,u):[0,T]\times\d^d\times H^s(\d^d;\c^n) \rightarrow H^s(\d^d;\r^n)$
 and
a function $F \in L^\infty\big([0,T], W^{1,\infty}\big(H^s(\d^d;\r^n), H^s(\d^d;\r^n)\big)\big)$.  
  
Suppose that the following hypothesis \ref{geometric proof quasi WW_a tech gen_cor theorem sys_H1} is verified,
there exists $ \omega \in C^\infty_c(\d^d;\r^n)$ supported in $\b(0,1)$ such that
\[ \forall (t,x) \in [0,T]\times \supp \omega,C_x^{-1}t \leq  \abs{ \int_0^t D_uV(s,x,0)[\omega(x)]ds} \leq C_xt,
\tag{\textbf{H1}}\label{geometric proof quasi WW_a tech gen_cor theorem sys_H1}
\] 
for a constant $C_x>0$ when $\omega(x)\neq 0$.

Fix $u_0 \in H^s(\d^d;\r^n) $ and take $r>0$, then there exists $T'>0$ such that for all $v_0$ in the ball $ \b(u_0,r) \subset  H^s(\d^d;\r^n) $ the Cauchy problem:
\[
\begin{cases}
\forall i \in [1,..,n], \ \partial_t v_i+V(t,x,v)\cdot \nabla v_i+(T_{a(t,v)} v)_i=F_i(t,v), \\
v(0,\cdot)=v_0(\cdot),
\end{cases}
\]
has a unique solution $v \in C([0,T'], H^s(\d^d;\r^n))$. Moreover $\forall R>0$, the flow map: 	\begin{align*}
										\b(0,R) \rightarrow &C([0,T'],H^s(\d^d;\r^n))\\
										v_0 \mapsto &v
										\end{align*}
				is not uniformly continuous.		
						
Considering a weaker control norm we get, for all $\eps'>0$ the flow map:
	\begin{align*}
										\b(0,R) \rightarrow &C([0,T'],H^{s-1+(\alpha-1)^+ +\eps'}(\d^d;\r^n))\\
										v_0 \mapsto &v
										\end{align*}
				is not $C^1$.
\end{corollary}

\subsection{Prerequisites on the Cauchy problem}
  We consider the Cauchy problem associated to Theorem \ref{geometric proof quasi WW_a tech gen_main theorem}:
\begin{equation}
\begin{cases} 
\partial_t u+T_{V(t,x,u)}\cdot \nabla u+T_a u=F(t,u)\\
u(0,\cdot)=u_0(\cdot) \in H^s(\d^d), \ s>1+\frac{d}{2},
\end{cases}
 \label{geometric proof quasi WW_a tech gen_subsec prereq CP eq1}
\end{equation}
\begin{theorem} \label{geometric proof quasi WW_a tech gen_subsec prereq CP_th1} 
Consider $0 \leq \alpha< 2$, $T>0$, an elliptic $C^1$ symbol $a:[0,T]\times H^s(\d^d) \rightarrow \Gamma^{\alpha}_1(\d^d)$ skew symmetric
\[\text{i.e such that} \ \ \RE(a_t)=\frac{a_t+a_t^\top}{2} \text{ is bounded in }  \Gamma^0_1(\d^d).\]

Moreover we suppose the following bounds on the nonlinearity in $a$:
\begin{equation}\label{geometric proof quasi WW_a tech gen_main theorem_est Dg 1}
\forall \mu \in  \r, \forall g \in H^\mu(\d^d), 
\norm{T_{D_u a}g+ \overline{T_{D_u a}g}}_{H^\mu}\leq M_0^{\beta}(D_{u} a+\overline{D_{u} a}) \norm{g}_{H^{\mu+\beta}}.
\end{equation}

Consider a $C^1$ function $V(t,x,u):[0,T]\times\d^d\times H^s(\d^d) \rightarrow H^s(\d^d;\r^d)$ and a function $F \in L^\infty\big([0,T], W^{1,\infty}\big(H^s(\d^d), H^s(\d^d)\big)\big)$.

Consider $\dps s>1+\frac{d}{2}$, $r>0$ and $u_0\in H^{s}(\d^d)$ such that :
\[\forall  v_0 \in \b(u_0,r), T\norm{\nabla_x v_0}_{L^\infty}<1.\]
 Then the problem \eqref{geometric proof quasi WW_a tech gen_subsec prereq CP eq1} with initial data $v_0$ has a unique solution $v \in C^0([0,T],H^{s}(\d^d))$ and the map $v_0 \mapsto v$ is continuous from $\b(u_0,r)$ to $C^0([0,T],H^{s}(\d^d))$.
Moreover we have the estimates:
\begin{equation} \label{geometric proof quasi WW_a tech gen_subsec prereq CP_th1_eq1} 
\forall 0 \leq \mu \leq s, \norm{v(t)}_{H^{\mu}(\d^d)} \leq C_\mu  \norm{v_0}_{H^{\mu}(\d^d)}. 
\end{equation}
Taking Two different solution $v,v'$, assuming moreover $v_0 \in H^{s+\alpha}(\d^d)$ then we have:
\begin{align} \label{geometric proof quasi WW_a tech gen_subsec prereq CP_th1_eq2}
\norm{(v-v')(t)}_{H^{s}(\d^d)} &\leq  \norm{v_0-v'_0}_{H^{s}(\d^d)}e^{C_s \int_0^t \norm{v(s)}_{H^{s+\beta}(\d^d)}}ds.
\end{align}
\end{theorem}

We will also work with hyperbolic paradifferential  equations and we summarize the properties needed in the following Theorem:
\begin{theorem} \label{geometric proof quasi WW_a tech gen_subsec prereq CP_th2} 
Consider$(a_t)_{t\in \r}$ a family of symbols in $\Gamma^\beta_1(\d^d)$ with $\beta \in \r$, such that
 $t \mapsto a_t$ is continuous and bounded from $\r$ to $\Gamma^\beta_1(\d^d)$
  and such that $ \RE(a_t)=\frac{a_t+a_t^\top}{2}$ is bounded in $\Gamma^0_1(\d^d)$, and take $T>0$.
   Then for all initial data $u_0 \in H^s(\d^d)$, and $f \in  C^0([0,T];H^s(\d^d))$ the Cauchy problem:
 \begin{equation} \label{geometric proof quasi WW_a tech gen_subsec prereq CP_th2_eq1}
\begin{cases}
 \partial_t u+T_au=f\\
 \forall x \in \d^d, u(0,x)=u_0(x)
\end{cases}
\end{equation}
has a unique solution $u \in C^0([0,T];H^s(\d^d))\cap C^1([0,T];H^{s-\beta}(\d^d))$ which verifies the estimates:
\[\norm{u(t)}_{H^s(\d^d)}\leq e^{Ct}\norm{u_0}_{H^s(\d^d)}+2\int_0^te^{C(t-t')}\norm{f(t')}_{H^s(\d^d)}dt',\]
where C depends on a finite symbol semi-norm $M_{1}^0(\RE(a_t))$.
\begin{remark}
We will also need to remark that fixing the initial data at 0 is an arbitrary choice.
More precisely, $\forall 0 \leq t_0\leq T$ and all data $u_0 \in H^s(\d^d)$  the Cauchy problem:
 \begin{equation} \label{geometric proof quasi WW_Appendix energy est of pulled back eq_theorem on para eq_eq 4} 
\begin{cases}
 \partial_t u+T_au=f\\
 \forall x \in \d^d, u(t_0,x)=u_0(x)
\end{cases}
\end{equation}
has a unique solution $u \in C^0([0,T];H^s(\d^d))\cap C^1([0,T];H^{s-\beta}(\d^d))$ which verifies the estimate:
\[\norm{u(t)}_{H^s(\d^d)}\leq e^{C\abs{t-t_0}}\norm{u_0}_{H^s(\d^d)}+2\abs{\int_{t_0}^te^{C(t-t')}\norm{f(t')}_{H^s(\d^d)}dt'}.\]
\end{remark}
\end{theorem} 

\subsection{Proof of Theorem \ref{geometric proof quasi WW_a tech gen_main theorem}} \label{geometric proof quasi WW_a tech gen}
As for Theorem \ref{geometric proof quasi WW_introduction_ theorem on dispersive Burgers eq}, for the proof we will show that there exists a positive constant $C$ and two sequences $(u^\lambda_{\eps,\tau})$ and $(v^\lambda_{\eps,\tau})$ solutions of \eqref{geometric proof quasi WW_a tech gen_main theorem_equation 1} on $[0,1]$ such that for every $t\in [0,1]$,
\[
\sup_{\lambda,\eps,\tau} \norm{u^\lambda_{\eps,\tau}}_{L^\infty([0,1],H^s(\d^d))}+
\norm{v^\lambda_{\eps,\tau}}_{L^\infty([0,1],H^s(\d^d))}\leq C,
\]
$(u^\lambda_{\eps,\tau})$ and $(v^\lambda_{\eps,\tau})$ satisfy initially
\[
\lim_{\substack{\lambda \rightarrow +\infty \\ \eps,\tau\rightarrow 0}} \norm{u^\lambda_{\eps,\tau}(0,\cdot)-v^\lambda_{\eps,\tau}(0,\cdot)}_{H^s(\d^d)}=0,
\]
but,
\[
\liminf_{\substack{\lambda \rightarrow +\infty \\ \eps,\tau\rightarrow 0}} \norm{u^\lambda_{\eps,\tau}-v^\lambda_{\eps,\tau}}_{L^\infty([0,1],H^s(\d^d))}\geq c>0.
\]

 Considering a weaker control norm we want to get, for all $\eps'>0$,
 \[
\liminf_{\substack{\lambda \rightarrow +\infty \\ \eps,\tau\rightarrow 0}} \frac{\norm{u^\lambda_{\eps,\tau}-v^\lambda_{\eps,\tau}}_{L^\infty([0,1],H^{s-1+(\alpha-1)^+ +\eps'}(\d^d))}}{\norm{u^\lambda_{\eps,\tau}(0,\cdot)-v^\lambda_{\eps,\tau}(0,\cdot)}_{H^s(\d^d)}}=+\infty.
\]

\subsubsection{Definition of the Ansatz}

 Let $(\lambda ,\eps )$ be two positive real sequences such that
\begin{equation} \label{geometric proof quasi WW_a tech gen_subsec def ansatz_eq1} 
 \lambda  \rightarrow + \infty, \  \eps  \rightarrow 0, \ \lambda \eps   \rightarrow + \infty,
\end{equation}
and put 
\begin{itemize}
\item on $\r^d$,
\[u ^0(x)=\lambda ^{\frac{d}{2}-s}\omega(\lambda x), \ v ^0(x)=u ^0(x)+ \eps  \omega(x), \]
\item on $\t^d$, $u ^0$ and $v^0$ as the periodic extensions of the functions defined above.
\end{itemize}
Take $t_0>0$ smaller than a harmless constant which will be fixed later, and $(\tau),
\\ 0 <\tau \leq t_0$. \\
Now let $l  $ be the solutions to the Cauchy problem on $[0,t_0]$:
\begin{equation} \label{geometric proof quasi WW_a tech gen_subsec def ansatz_eq2}
\begin{cases}
 \partial_t l +T_{a(t,l)} l =F(t,l)\\
 \forall x \in \d^d, l(\tau,x)=u ^0(x).
\end{cases}
\end{equation}
Put $u ^1(x)=l (0,x)$ and define $l' $
to be the solutions to the Cauchy problem on $[0,t_0]$:
\begin{equation} \label{geometric proof quasi WW_a tech gen_subsec def ansatz_def l'} 
\begin{cases}
 \partial_t l' +T_{a(t,l)} l' =F(t,l')\\
 \forall x \in \d^d, l(\tau,x)=v ^0(x).
\end{cases}
\end{equation}
 and put $v ^1=l(0,x)$.
\begin{remark}\label{geometric proof quasi WW_a tech gen_subsec def ansatz_semi lin intial datum}
It's important to notice that we use the same term $T_{a(t,l)}$ in \eqref{geometric proof quasi WW_a tech gen_subsec def ansatz_eq2}and \eqref{geometric proof quasi WW_a tech gen_subsec def ansatz_def l'} and thus $(l,l')$ have Lipschitz dependence on the data $(u^0,v^0)$.
\end{remark}
Define $u $ and $v $ as the solution given by Theorem \ref{geometric proof quasi WW_a tech gen_subsec prereq CP_th1} with initial data $u ^1$ and $v ^1$ on the intervals $[0,T]$, $[0,T']$. 
Taking $0<\delta<s-1-\frac{d}{2}$, $u ^0$ and $v ^0$ are uniformly bounded in $H^{1+\frac{d}{2}+\delta}(\d^d) $ and thus by Theorem \ref{geometric proof quasi WW_a tech gen_subsec prereq CP_th2}, $u ^1$ and $v ^1$ are also
uniformly bounded in $H^{1+\frac{d}{2}+\delta}(\d^d) $ and thus by the Sobolev injection Theorems they are bounded in $\dot{W}^{1,\infty}(\d^d) $. 
Thus we can take a uniform $0<T$ on which all the solutions are well defined and we take $0<t_0\leq T$.

\subsubsection{Change of variables by transport}

Put 
\[
\begin{cases}
	\frac{d}{dt}\chi (t,s,x)=V\big(t,\chi (t,s,x),u(t,\chi (t,s,x))\big) , \\
	\chi (s,s,x)=x,
\end{cases}
\]
and define analogously $\tilde{\chi} $ from $v $. 
We recall that from the Cauchy-Lipschitz Theorem as $u ^0$ and $v ^0$ are $H^{+\infty}(\d^d) $ functions, 
then $u ^1$, $v ^1$ are  $H^{+\infty} $ and $u $ and $v $ are  $H^{+\infty} (\d^d)$ with respect to the $x$ variable thus $\chi ,\tilde{\chi} \in C^1([0,T]^2,C^{\infty}(\d^d) )$. 
And they are both diffeomorphisms in the $x$ variable.\\

By the estimate \eqref{geometric proof quasi WW_study model pbm_subsec prereq cauchy pbm_thm cauchy pbm_eq1} $u $ and $v $ are uniformly bounded in $\dot{W}^{1,\infty}(\d^d) $ say by $M>0$
 and their Sobolev norms are dominated by those of $u ^1$ and $v ^1$ thus by those of $u ^0$ and $v ^0$ by Theorem \ref{geometric proof quasi WW_a tech gen_subsec prereq CP_th2}.  
 By classic manipulations of ODEs we get the estimates:
\begin{equation}\label{geometric proof quasi WW_a tech gen_subsec change of var by trans_eq1}
\begin{cases}
\exists C>0, \forall (t',t)\in [0, t_0], \forall x \in \d^d,  C^{-1}\leq \abs{D  \chi (t,t',x)}\leq C \\
 \forall 2 \leq k< \lfloor s -\frac{d}{2}\rfloor , \norm{D^k  \chi (t,t',x)}_{L^\infty } \leq C \norm{ u }_{W^{k,\infty} } 
 \end{cases}
\end{equation}
Analogous estimates hold for $\tilde{\chi} $ using $v $.\\
Now we compute the analogue of the classic transport computation but with the paracomposition operator which reads:
\begin{align*}
\partial_t(\chi(t,0,x)^* u(t,x))&=\chi(t,0,x)^*\partial_tu+T_{\partial_t \chi(t,0,x)}\cdot\chi(t,0,x)^* \nabla u(t,x)+R(t,u) \\
	&=-\chi(t,0,x)^*(T_{a(t,u)}u)(t, x)+\chi(t,0,x)^*F(t,u)+R(t,u)\\
 &=-T_{a(t,u)^*}  \chi(t,0,x)^*u(t, x)+\chi(t,0,x)^*F(t,u)+R(t,u)+R'(t,u),\\
 \chi (0,0,x)^*u (0, x)&=u (0,x)=u^1 (x).
\end{align*}
where $(\cdot)^*  $ is the change of variables by $\chi (t,0,x)$ as presented in Theorem \ref{paracomposition_section Paracomposition_subsec paracomp on the euclidean space_theorem paralinearisation of composition}. We can assemble the terms $R,R'$ and $F$ in a new term $F'$ verifying the same hypothesis as F, thus without loss of generality henceforth we will keep the generic notation F for all the terms verifying the same hypothesis. \\
Thus if we put $f $ the solution to the Cauchy problem, which is well posed by Appendix \ref{geometric proof quasi WW_Appendix energy est of pulled back eq}:
 \begin{equation} \label{geometric proof quasi WW_a tech gen_subsec change of var by trans_eq2} 
\begin{cases}
 \partial_t f+ T_{a(t,u)^*}   f=\chi (t,0,x)^*F(t,u)\\
 \forall x \in \d^d, f(0,x)=u ^1(x)
\end{cases}
\end{equation}
 we get:
\begin{align} \label{geometric proof quasi WW_a tech gen_subsec change of var by trans_eq3}
\chi (t,0,x)^*u (t,x)=f (t,x).
\end{align}
Analogously, if we put $g $ the solution to the well posed Cauchy problem,
\begin{equation} \label{geometric proof quasi WW_a tech gen_subsec change of var by trans_eq4}
\begin{cases}
 \partial_t g+ T_{\widetilde{a(t,v)}^*}  g=\tilde{\chi} (t,0,x)^*F(t,v)\\
 \forall x \in \d^d, g(0,x)=v ^1(x)
\end{cases}
\end{equation}
where $\widetilde{(\cdot)}^* $ is the change of variables by $\tilde{\chi} (t,0,x)$, we get
\begin{align} \label{geometric proof quasi WW_a tech gen_subsec change of var by trans_eq5}
 \tilde{\chi} (t,0,x)^*v (t,x)=g (t, x).
\end{align}
Returning to the ODEs defining $\chi $ and $\tilde{\chi} $ we get:
\begin{equation} \label{geometric proof quasi WW_a tech gen_subsec change of var by trans_eq6} 
\begin{cases}
\chi (t,t',x)=x+\int_{t'}^t V(s,\chi (s,t',x),f (s,x)))  ds, \\
\tilde{\chi} (t,t',x)=x+\int_{t'}^t V(s,\tilde{\chi} (s,t',x),g (s,x))  ds.
\end{cases}
\end{equation}
\begin{proposition}\label{geometric proof quasi WW_a tech gen_subsec change of var by trans_ prop 1}
There exists $C>0$ independent of $(\tau,\eps,\lambda)$ such that:\\
$\forall h \in H^s(\d^d),\forall (t,t')\leq t_0,$
\[ C^{-1}\norm{h}_{H^s}\leq\norm{h\circ \chi (t,t',x) }_{H^s}\leq C\norm{h}_{H^s}, \]
\[C^{-1}\norm{h}_{H^s}\leq\norm{h\circ \tilde{\chi} (t,t',x) }_{H^s}\leq C\norm{h}_{H^s}.\]
\end{proposition}

\begin{proof}
We will start by proving the upper bound for the estimate on the composition with $\chi$.
As $u$ is bounded in $(\tau,\eps,\lambda)$ on $C([0,T],H^s(\d^d))$ then there exists a unique solution $H\in C([0,T],H^s(\d))$ to
\[
\begin{cases}
\partial_s H(s,x)+V(s,x,u)\cdot \nabla H(s,x)=0\\
H(t,x)=h(x)
\end{cases}
\]
and $H$ is bounded in $(\tau,\eps,\lambda)$ on $C([0,T],H^s(\d^d))$. The desired bounds come from the fact that we have the explicit formula for $H$:
\[H(t',x)=h\circ \chi (t,t',x).\]
Now to get the lower bound it suffices to write by the upper bound computations:
\begin{align*}
\norm{h}_{H^s }&=\norm{h\circ \chi (t,t',x) \circ \chi (t',t,x)}_{H^s}\\
			&\leq C \norm{h\circ \chi (t,t',x)}_{H^s}.\\			
\end{align*}
We get analogously the estimates on the composition with $\tilde{\chi}$.
\end{proof}
\subsubsection{Key Lemma and proof of the Theorem}

\begin{lemma} \label{geometric proof quasi WW_a tech gen_subsec key lem}
 As $0\leq\alpha< 2$ we can find a sequence $(\tau,\eps,\lambda)$ such that for all $\eps'>0$ sufficiently small:
\begin{align}\label{geometric proof quasi WW_study model pbm_subsec key lem_lem 1_eq 1}
\begin{cases}
\tau\rightarrow 0, \\
\tau= \lambda ^{1-\alpha}, \text{ for }\alpha>1,\\
\tau\eps^k\lambda^{\beta}\rightarrow 0,
\end{cases}
&
\begin{cases}
\tau^2\eps \lambda^\alpha\rightarrow 0\\
 \eps^{-1}\lambda^{-1+(\alpha-1)^+-\eps'}\rightarrow +\infty,\\
 \tau \lambda \eps \rightarrow +\infty.
\end{cases}
\end{align}

Then there exists $c>0$ such that:
\begin{enumerate} 
\item $\forall (\tau,\eps,\lambda,\nu), \norm{u ^0\circ \chi (0,\tau,x)-u ^0\circ \tilde{\chi} (0,\tau,x)}_{H^{s-\nu}}>c\lambda^{-\nu}. $
\item For $\delta$ such that $0<\delta<s-1-\frac{d}{2}$:
\[ u (\tau,x)-v (\tau,x)=u ^0\circ \chi (0,\tau,x)-u ^0\circ \tilde{\chi} (0,\tau,x)+ O_{H^{s-\nu}}\big(\eps+[\tau^2\lambda^{\alpha-1}+\tau^2\eps \lambda^\alpha+\tau \eps^k \lambda^\beta ]\lambda^{-\nu}\big) .
\]
\end{enumerate}
\end{lemma}
We will now show that this Lemma implies the Theorem.
We have by combining the estimates $(1)$ and $(2)$ for $\nu=s$:
\[
\forall (\tau,\eps,\lambda), \norm{ u (\tau,x)-v (\tau,x)}_{H^s }>\frac{c}{2}>0  \text{ thus } \sup_{\tau,\eps,\lambda} \norm{ u (\tau,x)-v (\tau,x)}_{H^s }>\frac{c}{2}>0
\]
Also by Theorem \ref{geometric proof quasi WW_a tech gen_subsec prereq CP_th2} and Remark \ref{geometric proof quasi WW_a tech gen_subsec def ansatz_semi lin intial datum}: 
\[
\exists C>0,\norm{ u ^1(x)-v ^1(x)}_{H^s }\leq C\eps \text{ thus } \norm{ u ^1(x)-v ^1(x)}_{H^s } \rightarrow 0,
\] 
which gives the non uniform continuity in the desired norms.
Now for the control in a weaker norm we write:
\[ \frac{\norm{ u (\tau,x)-v (\tau,x)}_{H^{s -1+(\alpha-1)^+-\eps'}}}{\norm{ u ^1(x)-v ^1(x)}_{H^s }}\geq c\eps^{-1}\lambda^{-1+(\alpha-1)^+-\eps'}\rightarrow +\infty, \]
which gives the desired result.

\subsubsection{Proof of point 1 of Lemma \ref{geometric proof quasi WW_a tech gen_subsec key lem}}

 We first prove that $\exists c>0$ such that
 $$\norm{u ^0\circ \chi (0,\tau,x)}_{H^s }>c\lambda^{-\nu},$$ indeed by Proposition \ref{geometric proof quasi WW_a tech gen_subsec change of var by trans_ prop 1}  and change of variable:
 \begin{equation}\label{geometric proof quasi WW_a tech gen_subsec proof point 1 key lem_eq1}
 \norm{u ^0\circ \chi (0,\tau,x)}_{H^{s-\nu} }\geq C^{-1}\norm{u ^0}_{H^{s-\nu} } \geq C^{-1}\lambda^{-\nu}\norm{\omega}_{H^{s-\nu} }.
 \end{equation}
Now we will show that $u ^0\circ \chi (0,\tau,x)$ and $u ^0\circ \tilde{\chi} (0,\tau,x)$ have disjoint supports
 which will suffice to conclude given \eqref{geometric proof quasi WW_a tech gen_subsec proof point 1 key lem_eq1}.
 Put $y=\chi (0,\tau,x)$, thus $x=\chi (\tau,0,y)$. On the support of $u ^0\circ \chi (0,\tau,x)$ we have:
 \begin{itemize}
\item If $\d^d=\r^d$,  $\lambda \abs{y}\leq 1$. 
  \item If $\d^d=\t^d$,  $\forall k\in \n,2\pi k-1\leq \lambda  \abs{y}\leq 2\pi k+1$. 
  \end{itemize}
 We then compute by the Taylor formula:
\begin{align}\label{geometric proof quasi WW_a tech gen_subsec proof point 1 key lem_eq2}
\tilde{\chi} (0,\tau,x)&=\tilde{\chi} (0,\tau,\tilde{\chi} (\tau,0,y)) \nonumber\\
&+ \int_0^1 D_y\tilde{\chi} (0,\tau,r \chi (\tau,0,y)+(1-r)\tilde{\chi} (\tau,0,y)) dr[\chi (\tau,0,y)-\tilde{\chi} (\tau,0,y)] \\
				&=y+ \int_0^1 D_y\tilde{\chi} (0,\tau,r \chi (\tau,0,y)+(1-r)\tilde{\chi} (\tau,0,y)) dr[\chi (\tau,0,y)-\tilde{\chi} (\tau,0,y)]. \nonumber
\end{align}
First,
\begin{multline*}
D_y\tilde{\chi} (0,\tau,r \chi (\tau,0,y)+(1-r)\tilde{\chi} (\tau,0,y))\\
=Id+\int_0^{\tau} D_y [V(t,\tilde{\chi} (0,\tau,r \chi (\tau,0,y)+(1-r)\tilde{\chi} (\tau,0,y)),g (t,r\chi (\tau,0,y)+(1-r)\tilde{\chi} (\tau,0,y)))] dt.
\end{multline*}
Thus by estimates of Theorem \ref{geometric proof quasi WW_Appendix energy est of pulled back eq_theorem on para eq}taking $0<\delta<s-\frac{d}{2}-1$:
\begin{align*}
D_y\tilde{\chi} (0,\tau,r \chi (\tau,0,y)+(1-r)\tilde{\chi} (\tau,0,y))&=Id+O_{L^\infty}(\tau(\norm{v^1 }_{H^{1+\frac{d}{2}+\delta} }
+\norm{u^1 }_{H^{1+\frac{d}{2}+\delta} }))\\
&=Id+O_{L^\infty}(\tau),
\end{align*}
Which gives
\begin{equation}\label{geometric proof quasi WW_a tech gen_subsec proof point 1 key lem_eq3}
 \int_0^1 D_y\tilde{\chi} (0,\tau,r \chi (\tau,0,y)+(1-r)\tilde{\chi} (\tau,0,y)) dr=Id+O_{L^\infty}(\tau).
\end{equation}
Now we estimate $\chi (\tau,0,y)-\tilde{\chi} (\tau,0,y)$, by \eqref{geometric proof quasi WW_a tech gen_subsec change of var by trans_eq6} :
\begin{align}\label{geometric proof quasi WW_a tech gen_subsec proof point 1 key lem_eq4}
&\chi (\tau,0,y)-\tilde{\chi} (\tau,0,y)=\int_0^{\tau} V(t,\chi (t,0,y),f (t,y))-V(t,\tilde{\chi} (t,0,y),g (t,y))dt\\
&=\int_0^{\tau} \int_0^1 D_uV(t,\chi (t,0,y),r f (t,y)+(1-r)g (t,y))[f (t,y)-g (t,y)]dtdr\nonumber\\
&+\int_0^{\tau} \int_0^1 D_x V(t,r\chi (t,0,y)+(1-r)\tilde{\chi} (t,0,y),g (t,y))[\chi (t,0,y)-\tilde{\chi} (t,0,y)]dtdr. \nonumber
\end{align}
Taking $0<\delta<s-\alpha-\frac{d}{2}$,by estimates of Theorem \ref{geometric proof quasi WW_Appendix energy est of pulled back eq_theorem on para eq}:
\begin{multline*}
f (t,y)=f (0,y)+\int_0^t\partial_tf (r,y)dr\\=u^1 (y)+O_{L^\infty}(t(\norm{u^1 }_{H^{\frac{d}{2}+\alpha+\delta} }))=u^1 (y)+O_{L^\infty}(t\eps).
\end{multline*}
Analogously we get:
\[g (t,y)=v^1 (y)+O_{L^\infty}(t\eps).\]

Now $(u^1-v^1)(y)= (l-l')(0,y)$ is the evaluation of the solution of the following Cauchy problem at $t=0$:
\begin{equation} \label{geometric proof quasi WW_a tech gen_subsec proof point 1 key lem_eq5} 
\begin{cases}
 \partial_t (l-l') +T_{a(t,l)} (l-l') =F(t,l)-F(t,l')\\
 \forall y \in \d^d, (l-l')(\tau,y)=-\eps\omega(y).
\end{cases}
\end{equation}
thus by estimates of Theorem \ref{geometric proof quasi WW_Appendix energy est of pulled back eq_theorem on para eq}:
\begin{align*}
u^1 (y)-v^1 (y)&=(l-l')(0,y)=-\eps \omega(y)+ \int_{\tau}^0\partial_t (l-l')(t,y)dt\\
&=-\eps \omega(y)+O_{L^\infty}( \tau(\norm{v^1 }_{H^{\alpha+\frac{d}{2}+\delta}}+\norm{u^1 }_{H^{\alpha+\frac{d}{2}+\delta}}))\\
&=-\eps \omega(y)+O_{L^\infty}(\tau\eps).
\end{align*}
Thus,
\begin{align*}
&\chi (\tau,0,y)-\tilde{\chi} (\tau,0,y)\\
&=-\eps [\int_0^{\tau} \int_0^1 D_uV(t,\chi (t,0,y),r f (t,y)+(1-r)g (t,y))dtdr] [\omega(y)]+O_{L^{\infty}}(\tau^2\eps)\\
&+\int_0^{\tau} \int_0^1 D_x V(t,r\chi (t,0,y)+(1-r)\tilde{\chi} (t,0,y),g (t,y))dr[\underbrace{\chi (t,0,y)-\tilde{\chi} (t,0,y)}_{*}]dt.
\end{align*}
Iterating the computation in (*):
\begin{align*}
&\chi (\tau,0,y)-\tilde{\chi} (\tau,0,y)\\
&=-\eps [\int_0^{\tau} \int_0^1 D_uV(t,\chi (t,0,y),r f (t,y)+(1-r)g (t,y))dtdr] [\omega(y)]
+O_{L^{\infty}}(\tau^2\eps)\\
&=-\eps [\int_0^{\tau} D_uV(t,y,0)dt] [\omega(y)]
+O_{L^{\infty}}(\tau^2\eps+\eps^2\tau).
\end{align*}
and finally we get in \eqref{geometric proof quasi WW_a tech gen_subsec proof point 1 key lem_eq2},
\begin{align*}			
	&\tilde{\chi} (\tau,0,x)-y\\			
				&=-\eps [\int_0^{\tau} D_uV(t,y,0)dt] [\omega(y)]
				+O_{L^{\infty}}(\tau^2\eps+\eps^2\tau).	
\end{align*}
We get for $x \in \ \supp\ u ^0\circ \chi (0,\tau,\cdot) $:
\begin{itemize}
\item For $\d^d=\r^d$:
\begin{align*}
&\lambda \abs{\tilde{\chi} (0,\tau,x)}\\
& \geq \eps  \lambda  \abs{\int_0^\tau \int_0^1 D_uV(t,y,0)drdt[\omega(y)]}-1+o_{L^\infty}(\tau \lambda \eps) 
\\&
\geq 2 ,
\end{align*}
 which gives the desired result.
\item For $\d^d=\t^d$ given an adequate choice of  $\tau, \eps $ and $\lambda $ we get:
\[2 n\pi +1\leq \lambda \abs{\tilde{\chi} (0,\tau,x)} \leq 2(n+1)\pi -1,\]
Which again gives the desired result.
\end{itemize}

\subsubsection{Proof of point 2 of Lemma \ref{geometric proof quasi WW_a tech gen_subsec key lem}}\label{geometric proof quasi WW_a tech gen_subsec proof point 2 key lem}
We start by writing:
\begin{align*}
u (t,x)-v (t,x)&=\chi (0,t,x)^*f (t, x) -\tilde{\chi} (0,t,x)^*g (t, x)+R(f)-R(g)
\intertext{where $R$ is a regularizing operator of order 2, }
u (t,x)-v (t,x)&=\underbrace{\chi (0,t,x)^*f (t, x)-\tilde{\chi} (0,t,x)^*f (t, x)}_{(1)}\\
&+\tilde{\chi} (0,t,x)^*(f -g )(t, x)+R(f)-R(g).
\end{align*}
Term $(1)$ resembles the main term in the usual transport estimates we used in point 1 of the Lemma
 \footnote{Like the ones used in proving the quasi-linearity of the Burgers equation.} 
 but with a main difference is $f $ being some dispersed data and not compactly supported and the use of the paracomposition operator. 
Again, the main trick here was to construct from $u^0 , v ^0$ the defocused data in the past $u^1 ,v^1 $ and use this as the initial data for $f $ and $g $. 
\begin{align*}
u (\tau,x)-v (\tau,x)
			&=u ^0\circ \chi (0,\tau,x)-u ^0\circ \tilde{\chi} (0,\tau,x)\\
			&+T_{(u ^0)'\circ \chi (0,\tau,x)}\chi (0,\tau,x)-T_{(u ^0)'\circ \tilde{\chi} (0,\tau,x)}\tilde{\chi} (0,\tau,x)\\
			&+\chi (0,\tau,x)^*(f -u^0 )(\tau, x)-\tilde{\chi} (0,t,x)^*(f -u^0 )(\tau, x)\\
			&+\tilde{\chi} (0,\tau,x)^*(f -g )(\tau, x)+R(f)-R(g).\\
			&=u ^0\circ \chi (0,\tau,x)-u ^0\circ \tilde{\chi} (0,\tau,x)\\
			&+\underbrace{\chi (0,\tau,x)^*(f -l )(\tau, x)-\tilde{\chi} (0,t,x)^*(f -l )(\tau, x)}_{(1)}\\
			&+\underbrace{\tilde{\chi} (0,\tau,x)^*(f -g )(\tau, x)}_{(2)}+R(f)-R(g)\\
			&+\underbrace{T_{(u ^0)'\circ \chi (0,\tau,x)}\chi (0,\tau,x)-T_{(u ^0)'\circ \tilde{\chi} (0,\tau,x)}\tilde{\chi} (0,\tau,x)}_{(3)}.
\end{align*}
Where $l$ is defined by \eqref{geometric proof quasi WW_a tech gen_subsec def ansatz_eq2}and $R$ was modified to contain other regularizing operators of order 2 that appear by symbolic calculus rules.
The easiest part to estimate is the remainder one because of the gain of derivatives and Theorem \eqref{Pre eq8}:
\[\norm{R(f)-R(g)}_{H^s}\leq C \eps. \]
We turn to estimating $(1)$, by Theorem \ref{paracomposition_section Paracomposition_subsec paracomp on the euclidean space_theorem paralinearisation of composition}:
\[ \norm{\chi (0,\tau,x)^*(f -l )(\tau, x)}_{H^s } \leq C  \norm{(f -l )(\tau, \cdot)}_{H^s}.\]				
Now $f -l $ solve: 
\begin{equation}\label{geometric proof quasi WW_a tech gen_subsec proof point 2 key lem_eq1}
\begin{cases}
 \partial_t (f -l )+T_{a(t,l)} (f -l )=(T_{a(t,l)}-T_{a(t,u)^*} )f-F(t,l)+\chi (t,0,x)^*F(t,u)F(t,f) \\
 \forall x \in \d^d, (f -l )(0,x)=0.
\end{cases}
\end{equation}

Writing 
\[\chi (t,0,x)^*F(t,u)-F(t,l)=G_1(f-l)\]
where $G_1$ is a continuous linear operator on $H^s$ we get the estimates by Theorem \ref{geometric proof quasi WW_Appendix energy est of pulled back eq_theorem on para eq}:
\begin{align*} \label{geometric proof quasi WW_a tech gen_subsec proof point 2 key lem_eq2}
&\norm{f -l (\tau,\cdot)}_{H^{\nu} }\\
&\leq C[ \norm{(T_{a(\tau,l)}-T_{a(t,l)^*} )f}_{L^1([0,\tau],H^{\nu}) }+\norm{(T_{a(\tau,l)^*}-T_{a(\tau,u)^*} )f}_{L^1([0,\tau],H^{\nu}) }]\\
&\leq C[ \tau^2\norm{Id-D\chi^{-1}}_{L^\infty}\norm{f}_{H^{\nu+\alpha}}+\tau\norm{u-l}_{L^\infty}\norm{f}_{H^{\nu+\alpha}}].
\end{align*}
As $s>2+\frac{d}{2}$,
\[
\norm{f -l (\tau,\cdot)}_{H^{\nu} } \leq C[\tau^2\lambda^{\alpha-1}+\tau^2\lambda^{\alpha-1}]\lambda^{\nu-s},
\]		 
which gives
\begin{equation*}
 \norm{\chi (0,\tau,x)^*(f -l )(\tau, x)}_{H^\nu }
  \leq C\tau\lambda^{\alpha-1}\lambda^{\nu-s},
 \end{equation*}
and
 \begin{equation*}
 \norm{\tilde{\chi} (0,\tau,x)^*(f -l )(\tau, x)}_{H^\nu }\leq C\tau\lambda^{\alpha-1}\lambda^{\nu-s}.
 \end{equation*}
Thus we finally get
  \begin{equation}\label{geometric proof quasi WW_a tech gen_subsec proof point 2 key lem_eq3} 
 \norm{(1)}_{H^\nu }\leq  C\tau\lambda^{\alpha-1}\lambda^{\nu-s}.
 \end{equation}
Now we estimate $(2)$ and $(3)$ in the same manner, by Theorem \ref{paracomposition_section Paracomposition_subsec paracomp on the euclidean space_theorem paralinearisation of composition}:
\begin{align*}
 \norm{\tilde{\chi} (0,\tau,x)^*(f -g )(\tau, x)}_{H^\nu } \leq C  \norm{(f -g )(\tau, \cdot)}_{H^\nu },
 \end{align*}
 And as $s>2+\frac{d}{2}$ and by \eqref{geometric proof quasi WW_a tech gen_subsec change of var by trans_eq6}:
 \begin{align*}
 \norm{T_{(u ^0)'\circ \chi (0,\tau,x)}\chi (0,\tau,x)-T_{(u ^0)'\circ \tilde{\chi} (0,\tau,x)}\tilde{\chi} (0,\tau,x) } \leq C  \norm{(f -g )(\tau, \cdot)}_{H^\nu }.
 \end{align*}
Now $f -g $ solve: 
\begin{align}\label{geometric proof quasi WW_a tech gen_subsec proof point 2 key lem_eq4}
 \begin{cases}
 &\partial_t (f -g )+ T_{a(t,u)^*}(f -g )-(T_{a(t,v)^*}-T_{\widetilde{a(t,v)}^*} )g\\
 &-\chi (t,0,x)^*F(t,u)+\tilde{\chi} (t,0,x)^*F(t,v)=(T_{a(t,u)^*}-T_{a(t,v)^*} )g \\
& \forall x \in \d^d, (f -g )(0,x)=(u^1 -v^1 )(x).
\end{cases}
\end{align}
Here will need to be more careful as the nonlinearity in the dispersive term can be more "harmful" than the transport term when $\alpha \geq 1$, which was not there in the treatment of the model problem.
 More precisely we write:
\[\chi (t,0,x)^*F(t,u)-F(t,l)=G_2(f-g),\]
where $G_2$ is a continuous linear operator on $H^s$ and we get by Theorem \ref{geometric proof quasi WW_a tech gen_subsec prereq CP_th1}:
\begin{align*}
&\norm{f -g (\tau,\cdot)}_{H^{\nu} }\\
&\leq C\bigg[ \norm{(T_{a(t,v)^*}-T_{\widetilde{a(t,v)}^*} )g}_{L^1([0,\tau],H^{\nu}) }+\norm{(T_{a(t,u)^*}-T_{a(t,v)^*} )g}_{L^1([0,\tau],H^{\nu}) }+\eps\bigg]\\
&\leq C[ \tau \norm{D\chi^{-1}-D\tilde{\chi}^{-1}}_{L^\infty}\norm{g}_{H^{\nu+\alpha}}+\tau \norm{f-g}_{L^\infty}\norm{(f,g)}_{L^\infty}^k\norm{g}_{H^{\nu+\beta}}+\eps]\\
&\leq C[ \tau^2\eps \lambda^\alpha \lambda^{\nu-s}+\tau \eps^k\lambda^{\beta}\lambda^{\nu-s}+\eps],
\end{align*}
 which gives
  \begin{equation}\label{geometric proof quasi WW_a tech gen_subsec proof point 2 key lem_eq5}
 \norm{(2)}_{H^\nu }\leq  C(\tau^2\eps \lambda^\alpha \lambda^{\nu-s}+\tau \eps^k\lambda^{\beta}\lambda^{\nu-s}+\eps),
 \end{equation}
 and
 \begin{equation}
 \norm{(3)}_{H^\nu }\leq  C(\tau^2\eps \lambda^\alpha \lambda^{\nu-s}+\tau \eps^k\lambda^{\beta}\lambda^{\nu-s}+\eps),
 \end{equation}
  finishing the proof of Lemma \ref{geometric proof quasi WW_a tech gen_subsec key lem}and Theorem  \ref{geometric proof quasi WW_a tech gen_main theorem}.
\begin{remark}\label{geometric proof quasi WW_a tech gen_subsec proof point 2 key lem_rem water waves remark cancellation}
For the application to the water waves system we need to remark that the restriction on $\beta$ comes from the $T_{a(t,u)^*}-T_{a(t,v)^*}g$. Naively estimating this term we see that in the case of water waves with surface tension we are working in the limit case $\beta=\alpha=\frac{3}{2}$ and $k=2$ which is barely missed by Theorem \ref{geometric proof quasi WW_a tech gen_main theorem}. We will will show that this can be avoided in the special case of water waves system by carefully choosing the ansatz. For this we notice that one slightly modify the last two estimates and we write:
  \begin{equation}\label{geometric proof quasi WW_a tech gen_subsec proof point 2 key lem_rem water waves remark cancellation_modif WW eq 1}
 \norm{(2)}_{H^\nu }\leq  C(\tau^2\eps \lambda^\alpha \lambda^{\nu-s}+\tau^2 \eps^k\lambda^{\beta}\lambda^{\nu-s}+\eps+\norm{(T_{a(t,u_0)^*}-T_{a(t,v_0)^*} )g}_{L^1([0,\tau],H^{\nu}) }),
 \end{equation}
 and
 \begin{equation}
 \norm{(3)}_{H^\nu }\leq  C(\tau^2\eps \lambda^\alpha \lambda^{\nu-s}+\tau^2 \eps^k\lambda^{\beta}\lambda^{\nu-s}+\eps+\norm{(T_{a(t,u_0)^*}-T_{a(t,v_0)^*} )g}_{L^1([0,\tau],H^{\nu}) }).
 \end{equation}
\end{remark}
\section{Quasi-linearity of the Water-Waves system with surface tension}
In this section we always have $\kappa=1$.
\subsection{Prerequisites from the Cauchy problem}
We start by recalling the apriori estimates given by Proposition $5.2$ of \cite{Alazard11}. We keep the notations of Theorem \ref{geometric proof quasi WW_introduction_Quasi-linearity of the water Wave system_theorem with surface tension}.
\begin{proposition} \label{geometric proof quasi WW_quasi WW ST CP} (From \cite{Alazard11}) Let $d\geq 1$ be the dimension and consider a real number $s> 2+\frac{d}{2}$. Then there exists a non decreasing function C such that, for all $T \in ]0,1]$ and all solution $(\eta,\psi)$ of \eqref{geometric proof quasi WW_introduction_ subsection the equations_WW eq} such that
\[(\eta,\psi) \in C^0([0,T];H^{s+\frac{1}{2}}(\r^d)\times H^{s}(\r^d) ) \text{ and $H_t$ is verified for $t\in [0,T]$,}\]  
we have \[\norm{(\eta,\psi)}_{L^\infty(0,T;H^{s+\frac{1}{2}}\times H^{s})} \leq C({(\eta_0,\psi_0)}_{H^{s+\frac{1}{2}}\times H^{s}})+TC(\norm{(\eta,\psi)}_{L^\infty(0,T;H^{s+\frac{1}{2}}\times H^{s})}). \]
\end{proposition}
The proof will rely on the para-linearised and symmetrized version of \eqref{geometric proof quasi WW_introduction_ subsection the equations_WW eq} given by Proposition $4.8$ and corollary $4.9$ of \cite{Alazard11}. Before we recall this, for clarity as in \cite{Alazard11} we introduce a special class of operators $\Sigma^m \subset \Gamma^m_0$ given by:
\begin{definition}(From \cite{Alazard11})
Given $m \in \r$, $\Sigma^m$ denotes the class of symbols $a$ of the form
\[a=a^{(m)}+a^{(m-1)}\]
with 
\[a^{(m)}=F(\nabla \eta (t,x),\xi)\]
\[a^{(m-1)}=\sum_{\abs{k}=2}G_\alpha(\nabla \eta (t,x),\xi)\partial^k_x \eta(t,x),\]
such that 
\begin{enumerate}
\item $T_a$ maps real valued functions to real-valued functions;
\item F is of class $C^\infty$ real valued function of $(\zeta,\xi) \in \r^d \times (\r^d \setminus 0),$ homogeneous of order m in $\xi$; and such that there exists a continuous function $K=K(\zeta)>0$ such that
\[F(\zeta,\xi)\geq K(\zeta)\abs{\xi}^m,\]   
for all $(\zeta,\xi)\in \r^d\times (\r^d \setminus 0)$; 
\item $G_\alpha$ is a $C^\infty$ complex valued function of $(\zeta,\xi)\in \r^d\times (\r^d \setminus 0)$, homogeneous of order $m-1$ in $\xi$.
\end{enumerate}
\end{definition}
$\Sigma^m$ enjoys all the usual symbolic calculus properties modulo acceptable reminders that we define by the following:
\begin{definition-notation}(From \cite{Alazard11})
Let $m \in \r$ and consider two families of operators of order m,
\[\set{A(t): t \in [0,T]}, \ \ \ \set{B(t):t \in [0,T]}.\]
We shall say that $A \sim B$ if $A-B$ is of order $m -\frac{3}{2}$ and satisfies the following estimate: for all $\mu \in \r$, there exists a continuous function C such that for all $t \in [0,T]$,
\[\norm{A(t)-B(t)}_{H^{\mu}\rightarrow H^{\mu -m+\frac{3}{2}}} \leq C(\norm{\eta(t)}_{H^{s+\frac{1}{2}}}).\]
\end{definition-notation}
In the next Proposition we recall the different symbols that appear in the para-linearisation and symmetrisation of the equations.
\begin{proposition}\label{geometric proof quasi WW_quasi WW ST para symbols}(From \cite{Alazard11})
We work under the hypothesis of Proposition \ref{geometric proof quasi WW_quasi WW ST CP}.
Put 
\[\lambda=\lambda^{(1)}+\lambda^{(0)}, \ l=l^{(2)}+l^{(1)} \text{ with,}\]
\begin{align}\label{geometric proof quasi WW_quasi WW ST para symbols_eq1}
&\begin{cases}
\lambda^{(1)}=\sqrt{(1+|\nabla \eta|^2)\abs{\xi}^2-(\nabla \eta \cdot \xi)^2},\\
\lambda^{(0)}=\frac{1+|\nabla \eta|^2}{2\lambda^{(1)}}\set{div \bigg(\alpha^{(1)}\nabla \eta\bigg)+i\partial_\xi \lambda^{(1)}\cdot \nabla \alpha^{(1)} },\\
\alpha^{(1)}=\frac{1}{\sqrt{1+|\nabla \eta|^2}}\bigg( \lambda^{(1)}+i\nabla \eta \cdot \xi \bigg).
\end{cases}
\\
&\begin{cases}
l^{(2)}=(1+|\nabla \eta|^2)^{-\frac{1}{2}}\bigg(\abs{\xi}^2-\frac{(\nabla \eta \cdot \xi)^2}{1+|\nabla \eta|^2} \bigg),\\
l^{(1)}=-\frac{i}{2}(\partial_x \cdot \partial_\xi)l^{(2)}.\\
\end{cases}
\end{align}
Now let $q\in \Sigma^0, p \in \Sigma^{\frac{1}{2}}, \gamma \in \Sigma^{\frac{3}{2}}$ be defined by
\begin{align*}
q&=(1+|\nabla \eta|^2)^{-\frac{1}{2}},\\
p&=(1+|\nabla \eta|^2)^{-\frac{5}{4}}\sqrt{\lambda^{(1)}}+p^{(-\frac{1}{2})},\\
\gamma&=\sqrt{l^{(2)}\lambda^{(1)}}+\sqrt{\frac{l^{(2)}}{\lambda^{(1)}}}\frac{Re \lambda^{(0)}}{2}-\frac{i}{2}(\partial_\xi \cdot \partial_x)\sqrt{l^{(2)}\lambda^{(1)}},\\
p^{(-\frac{1}{2})}&=\frac{1}{\gamma^{(\frac{3}{2})}}\set{ql^{(1)}-\gamma^{(\frac{1}{2})}p^{(\frac{1}{2})}+i\partial_\xi \gamma^{(\frac{3}{2})}\cdot \partial_x p^{(\frac{1}{2})}}.
\end{align*}
Then
\[T_qT_\lambda \sim T_\gamma T_q, \  T_q T_l \sim T_\gamma T_p, \ T_\gamma \sim (T_\gamma)^\top. \]
\end{proposition}
Now we can write the para-linearization and symmetrization of the equations \eqref{geometric proof quasi WW_introduction_ subsection the equations_WW eq} after a change of variable:
\begin{corollary}\label{geometric proof quasi WW_quasi WW ST_cor paralin sym system}(From \cite{Alazard11})
Under the hypothesis of Proposition \ref{geometric proof quasi WW_quasi WW ST CP}, introduce the unknowns
\[U=\psi-T_B \eta \footnote{U is commonly called the "good" unknown of Alinhac.}, \ \Phi_1= T_p \eta  \text{ and }  \Phi_2=T_q U,\]
where we recall,
\[
\begin{cases}
B=(\partial_y \phi)_{|y=\eta}=\frac{\nabla \eta \cdot \nabla \psi +G(\eta) \psi}{1+|\nabla \eta|^2},\\
V=(\nabla_x \phi)_{|y=\eta}=\nabla \psi -B\nabla \eta.
\end{cases}\]
Then $\Phi_1, \Phi_2 \in C^0([0,T];H^s(\r^d))$ and 
\begin{equation}
\begin{cases}
\partial_t \Phi_1 +T_V \cdot \nabla \Phi_1-T_\gamma \Phi_2=f_1,\\
\partial_t \Phi_2+ T_V \cdot \nabla \Phi_2+ T_\gamma \Phi_1=f_2,
\end{cases}
\end{equation}
with $f_1,f_2 \in L^\infty(0,T;H^s(\r^d)),$ and $f_1,f_2$ have $C^1$ dependence on $(U,\theta)$ verifying:
\[
\norm{(f_1,f_2)}_{L^\infty(0,T;H^s(\r^d))} \leq C(\norm{(\eta,\psi)}_{L^\infty(0,T;H^{s+\frac{1}{2}}\times H^s(\r^d))}).
\]
\end{corollary}
\subsection{Proof of Theorem \ref{geometric proof quasi WW_introduction_Quasi-linearity of the water Wave system_theorem with surface tension}}
Corollary \ref{geometric proof quasi WW_quasi WW ST_cor paralin sym system} shows that the para-linearization and symmetrization of the equations \eqref{geometric proof quasi WW_introduction_ subsection the equations_WW eq} are of the form of the equations treated in Theorem \ref{geometric proof quasi WW_a tech gen_main theorem}. The goal of the proof is thus to mainly show that the previous change of unknowns preserves the quasi-linear structure of the equations. This we will be proved but with a slightly different change of unknowns that will satisfy the same type of equations.\\
\subsubsection{Reducing the problem around 0}
{\ } \\
Fix $T>0$, $r>0$ as in the proof of Theorem \ref{geometric proof quasi WW_a tech gen_main theorem}, given the local nature of the result we see that we can work on balls with radius $r$ small. Henceforth we will be working on $ \b (0,r) \subset C^0([0,T];H^{s+\frac{1}{2}}(\r^d)\times H^{s}(\r^d) )$ and without loss of generality we suppose that $H_t$ is always verified on $[0,T]$ on that set.
\subsubsection{New change of unknowns}
\begin{lemma}\label{geometric proof quasi WW_quasi WW ST_subsec New chnage of unknowns_lem1}
Under the hypothesis of Proposition \ref{geometric proof quasi WW_quasi WW ST CP}, fix $\eps>0$ and introduce the unknowns
\[U=\psi-T_B \eta, \  \tilde{\Phi}_1=[ T_p+\eps (I-T_1) ]\eta  \text{ and }  \tilde{\Phi}_2=[T_q +\eps (I-T_1)] U.\]
Then $ \tilde{\Phi}_1,  \tilde{\Phi}_2\in C^0([0,T];H^s(\r^d))$ and 
\begin{equation} \label{geometric proof quasi WW_quasi WW ST_subsec New chnage of unknowns_lem1_eq1}
\begin{cases}
\partial_t  \tilde{\Phi}_1 +T_V \cdot \nabla  \tilde{\Phi}_1-T_\gamma  \tilde{\Phi}_2=\tilde{f}_1,\\
\partial_t  \tilde{\Phi}_2+ T_V \cdot \nabla  \tilde{\Phi}_2+ T_\gamma  \tilde{\Phi}_1=\tilde{f}_2,
\end{cases}
\end{equation}
with $\tilde{f}_1,\tilde{f}_2 \in L^\infty(0,T;H^s(\r^d)),$ and $\tilde{f}_1,\tilde{f}_2 $ have $C^1$ dependence on $(U,\theta)$ verifying:
\[
\norm{(\tilde{f}_1,\tilde{f}_2)}_{L^\infty(0,T;H^s(\r^d))} \leq C(\norm{(\eta,\psi)}_{L^\infty(0,T;H^{s+\frac{1}{2}}\times H^s(\r^d))}).
\]
\end{lemma}
\begin{proof}
 The Lemma simply follows from the fact that $I-T_1$ is a regularizing operator.
\end{proof}
\subsubsection{The new change of unknowns locally preserves the structure of the equations:} 
To apply Theorem \ref{geometric proof quasi WW_a tech gen_main theorem} we simply note that $DV(0,0)(h,k)=\nabla h$.
Thus proof of Theorem \ref{geometric proof quasi WW_introduction_Quasi-linearity of the water Wave system_theorem with surface tension} in the threshold $s>2+\frac{d}{2}$ will then follow  from Theorem \ref{geometric proof quasi WW_a tech gen_main theorem} combined with Lemma \ref{geometric proof quasi WW_quasi WW ST_subsec New chnage of unknowns_lem1} and the following Lemma.
\begin{lemma}\label{geometric proof quasi WW_quasi WW ST_subsec New change of unknowns local struc_lem1}
Let $d\geq 1$ and $s> 2+\frac{d}{2}$. There exists $r,\eps>0$ such that:
\begin{align*}
\tilde{\Phi}: \b (0,r) &\rightarrow C^0([0,T];H^s(\r^d))\\
(\eta, \psi) &\mapsto (\tilde{\Phi}_1,  \tilde{\Phi}_2)
\end{align*}
is a $C^\infty$ diffeomorphism upon it's image and $\tilde{\Phi}(0)$=0.
\end{lemma}
\begin{proof}
\[
\tilde{\Phi}(\eta,\psi)=\underbrace{\begin{pmatrix}
  T_p +\eps(I-T_1) & 0 \\
  0 & T_q +\eps(I-T_1) 
 \end{pmatrix}}_{(1)}
 \underbrace{ \begin{pmatrix}
  I  & 0 \\
 -T_B& I
 \end{pmatrix}}_{(2)}
  \begin{pmatrix}
  \eta \\
 \psi
 \end{pmatrix} 
 \]
$(2)$ being clearly a diffeomorphism we will concentrate on $(1)$.\\
First we see that for $r$ small enough $T_q +\eps(I-T_1) $ is a perturbation of the $T_1 +\eps(I-T_1)$, indeed by symbolic calculus rules:
\begin{align*}
\norm{T_q +\eps(I-T_1)-T_1 -\eps(I-T_1)}_{\mathcal{L}(H^s)}&=\norm{T_q-T_1}_{\mathcal{L}(H^s)}\\
								&\leq M_0^0(q-1)\\
								&\leq C(\norm{\eta}_{W^{1,\infty}})\norm{\eta}_{W^{1,\infty}}\\
								&\leq C(\norm{\eta}_{H^s})\norm{\eta}_{H^s}
\end{align*}
which gives the desired result.\\
 Now we turn to $T_p +\eps(I-T_I) $. First notice that for $\eps>0$:
 \[T_{\abs{\xi}^{\frac{1}{2}}} +\eps(I-T_I):C^0([0,T];H^{s+\frac{1}{2}}(\r^d) ) \rightarrow C^0([0,T];H^s(\r^d))\] is a $C^\infty$ diffeomorphism. And now we see that $T_p +\eps(I-T_I) $ is a perturbation of $T_{\abs{\xi}^{\frac{1}{2}}} +\eps(I-T_I) $ indeed by symbolic calculus rules:
 \begin{align*}
\norm{T_p -T_{\abs{\xi}^{\frac{1}{2}}} }_{\mathcal{L}(H^{s+\frac{1}{2}},H^s)}	&\leq C(\norm{\eta}_{W^{1,\infty}})\norm{\eta}_{W^{1,\infty}}\\
								&\leq C(\norm{\eta}_{H^s})\norm{\eta}_{H^s}
\end{align*}
\end{proof}
Now to conclude the proof of Theorem \ref{geometric proof quasi WW_introduction_Quasi-linearity of the water Wave system_theorem with surface tension}, we want to apply Corollary \ref{geometric proof quasi WW_a tech gen_cor theorem sys} but as remarked previously we  find ourselves in the limit case $\beta=\alpha=\frac{3}{2}$ and $k=2$ which is not apriori covered by the Corollary. The key observation is that we have:
\[
V(\eta,\psi)=\nabla \psi -B\nabla \eta,\ \gamma=\gamma(\eta),
\]
and that $V(0,\psi)$ and $\abs{\xi}^{\frac{3}{2}}=\gamma(0)$ do verify the hypothesis of Corollary \ref{geometric proof quasi WW_a tech gen_cor theorem sys} which is sufficient in order to apply the Corollary by Remark \ref{geometric proof quasi WW_a tech gen_subsec proof point 2 key lem_rem water waves remark cancellation}. Thus by Lemma \ref{geometric proof quasi WW_quasi WW ST_subsec New change of unknowns local struc_lem1}, the equations \eqref{geometric proof quasi WW_quasi WW ST_subsec New chnage of unknowns_lem1_eq1} verify the hypothesis of Corollary \ref{geometric proof quasi WW_a tech gen_cor theorem sys} in the threshold $s>2+\frac{d}{2}$ with the choice $\eta_0=0$ thus we have two sequences:
\[
\begin{cases}
\exists ( \tilde{\Phi}_1^0, \tilde{\Phi}_2^0)\in C^0([0,T];H^s(\r^d)) \ \ \text{solution of \eqref{geometric proof quasi WW_quasi WW ST_subsec New chnage of unknowns_lem1_eq1}},\\
\exists ( \tilde{\Phi}_1^1, \tilde{\Phi}_2^1)\in C^0([0,T];H^s(\r^d)) \ \ \text{solution of \eqref{geometric proof quasi WW_quasi WW ST_subsec New chnage of unknowns_lem1_eq1}},
\end{cases}
\]
such that $\exists c>0$,
\[
\begin{cases}
\norm{( \tilde{\Phi}_1^0, \tilde{\Phi}_2^0)(0,\cdot)-( \tilde{\Phi}_1^1, \tilde{\Phi}_2^1)(0,\cdot)}_{H^s}=\norm{( 0, \tilde{\Phi}_2^0)(0,\cdot)-( 0, \tilde{\Phi}_2^1)(0,\cdot)}_{H^s}\rightarrow 0,\\
\norm{( \tilde{\Phi}_1^0, \tilde{\Phi}_2^0)-( \tilde{\Phi}_1^1, \tilde{\Phi}_2^1)}_{L^\infty([0,T], H^s)}>c.
\end{cases}
\]

Now putting $(\eta^0,\psi^0)=\tilde{\Phi}^{-1}( \tilde{\Phi}_1^0, \tilde{\Phi}_2^0)$ and $(\eta^1,\psi^1)=\tilde{\Phi}^{-1}( \tilde{\Phi}_1^1, \tilde{\Phi}_2^1)$ we get from Lemmas \ref{geometric proof quasi WW_quasi WW ST_subsec New chnage of unknowns_lem1} and \ref{geometric proof quasi WW_quasi WW ST_subsec New change of unknowns local struc_lem1}:
\[
\begin{cases}
(\eta^0,\psi^0)\in C^0([0,T];H^{s+\frac{1}{2}}(\r^d)\times H^{s}(\r^d) ) \ \ \text{is a solution of \eqref{geometric proof quasi WW_introduction_ subsection the equations_WW eq}},\\
(\eta^1,\psi^1)\in C^0([0,T];H^{s+\frac{1}{2}}(\r^d)\times H^{s}(\r^d) ) \ \ \text{is a solution of \eqref{geometric proof quasi WW_introduction_ subsection the equations_WW eq}},
\end{cases}
\]

such that 
\[
\begin{cases}
\norm{(\eta^0,\psi^0)(0,\cdot)-(\eta^1,\psi^1)(0,\cdot)}_{H^{s+\frac{1}{2}}\times H^s}=\norm{(0,\psi^0)(0,\cdot)-(0,\psi^1)(0,\cdot)}_{H^{s+\frac{1}{2}}\times H^s}\rightarrow 0,\\
\norm{(\eta^0,\psi^0)-(\eta^1,\psi^1)}_{L^\infty ([0,T],H^{s+\frac{1}{2}}\times H^s)}>c.
\end{cases}
\]
thus giving us the desired result. As the change of unknowns is a diffeomorphism (thus is Lipschitz) we get analogously the result on the control in weaker norms.
\section{Quasi-Linearity of the Gravity Water Waves}
In this section we always have $\kappa=0$. The proof will follow as in the previous section but with some extra care, taking into account the lower regularity framework.
\subsection{Prerequisites from the Cauchy problem}
We start by recalling the apriori estimates given by Proposition $4.1$ of \cite{Alazard14}, we keep the notations of Theorem \ref{geometric proof quasi WW_introduction_Quasi-linearity of the water Wave system_theorem without surface tension}.
\begin{proposition} \label{geometric proof quasi WW_quasi WW without ST_subsec prereq CP}(From \cite{Alazard14}) Let $d\geq 1$ be the dimension and consider a real number $s> 1+\frac{d}{2}$. Then there exists a non decreasing function C such that, for all $T \in ]0,1]$ and all solution $(\eta,\psi)$ of \eqref{geometric proof quasi WW_introduction_ subsection the equations_WW eq} such that:
\[
\begin{cases}
(\eta,\psi) \in C^0([0,T];H^{s+\frac{1}{2}}(\r^d) \times H^{s+\frac{1}{2}}(\r^d)),\\  
\text{$H_t$ is verified for $t\in [0,T]$,}\\
 \exists c_0>0, \forall t \in [0,T], a(t,x)\geq c_0,
 \end{cases}
 \]
 we have \footnote{Recall B and V are defined by \eqref{geometric proof quasi WW_introduction_Quasi-linearity of the water Wave system_equations on the traces of the velocity}.} 
 \begin{align*}
 &\norm{(\eta,\psi,V,B)}_{L^\infty([0,T];H^{s+\frac{1}{2}}\times H^{s+\frac{1}{2}}\times H^{s}\times H^{s})}\\
 &\leq C(\norm{(\eta_0,\psi_0,V_0,B_0)}_{H^{s+\frac{1}{2}}\times H^{s+\frac{1}{2}}\times H^{s}\times H^{s}})\\
 &+TC(\norm{(\eta,\psi,V,B)}_{L^\infty(0,T;H^{s+\frac{1}{2}}\times H^{s+\frac{1}{2}}\times H^{s}\times H^{s})}).
\end{align*}
\end{proposition}
The proof will rely on the para-linearised and symmetrized version of \eqref{geometric proof quasi WW_introduction_ subsection the equations_WW eq} given by Proposition $4.8$ and $4.10$ of \cite{Alazard14}. Given the low regularity threshold, $\eta$ and thus $\Omega_t$ are in $W^{\frac{3}{2},\infty}(\r^d)$ for the gravity water waves by contrast to $W^{\frac{5}{2},\infty}(\r^d)$ frame work for the case with surface tension, the para-linearisation of \eqref{geometric proof quasi WW_introduction_ subsection the equations_WW eq} is done with the variables V and B. This will only add a technical level to our proof of quasi-linearity. 
\begin{proposition}\label{geometric proof quasi WW_quasi WW without ST_subsec prereq paralin sym eq}(From \cite{Alazard14})
Under the hypothesis of Proposition \ref{geometric proof quasi WW_quasi WW without ST_subsec prereq CP}, suppose moreover that $ \norm{(V_0,B_0)}_{ H^{s}\times H^{s}}<+\infty$ thus by Proposition \eqref{geometric proof quasi WW_quasi WW without ST_subsec prereq CP}this regularity is propagated on $[0,T]$. Now introduce the unknowns
\[
\begin{cases}
\zeta=\nabla \eta, 
\\ U=V+T_\zeta B, 
\end{cases}
\text{ where, } 
\begin{cases}
B=(\partial_y \phi)_{|y=\eta}=\frac{\nabla \eta \cdot \nabla \psi +G(\eta) \psi}{1+|\nabla \eta|^2},\\
V=(\nabla_x \phi)_{|y=\eta}=\nabla \psi -B\nabla \eta.
\end{cases}\]
Now define the symbols:
\[
\begin{cases}
\lambda=\sqrt{(1+|\nabla \eta|^2)\abs{\xi}^2-(\nabla \eta \cdot \xi)^2},\\
\gamma=\sqrt{a \lambda},\\
q=\sqrt{\frac{a}{\lambda}}.
\end{cases}
\]
Set $\theta=T_q \zeta.$
Then $\theta, U \in C^0([0,T];H^s(\r^d) )$ and 
\begin{equation}
\begin{cases}
\partial_t U +T_V \cdot \nabla U+T_\gamma \theta=f_1,\\
\partial_t \theta+ T_V \cdot \nabla \theta- T_\gamma U=f_2,
\end{cases}
\end{equation}
with $f_1,f_2 \in L^\infty(0,T;H^s(\r^d) ),$ and $f_1,f_2$ have  $C^1$ dependence on $(U,\theta)$ verifying:
\[
\norm{(f_1,f_2)}_{L^\infty(0,T;H^s )} \leq C (\norm{(\eta,\psi,V,B)}_{L^\infty(0,T;H^{s+\frac{1}{2}}\times H^{s+\frac{1}{2}}\times H^{s}\times H^{s})})
\]
\end{proposition}
\subsection{Proof of Theorem \ref{geometric proof quasi WW_introduction_Quasi-linearity of the water Wave system_theorem without surface tension}}
As in the proof of Theorem \ref{geometric proof quasi WW_introduction_Quasi-linearity of the water Wave system_theorem with surface tension}, Proposition \eqref{geometric proof quasi WW_quasi WW without ST_subsec prereq paralin sym eq} shows that the para-linearisation and symmetrisation of the Equations \eqref{geometric proof quasi WW_introduction_ subsection the equations_WW eq} are of the form of the equations treated in Theorem \ref{geometric proof quasi WW_a tech gen_main theorem}. Thus again, the goal of the proof is thus to mainly show that the previous change of unknowns preserves the quasi-linear structure of the equations. This we will be proved but with a slightly different change of unknowns that will satisfy the same type of equations but where we take into account the low frequencies. For concision we will omit the $(\r^d) $ when writing the functional spaces.
\subsubsection{Reducing the problem around 0}
{\ } \\
Fix $T>0$, $r>0$ as in the proof of Theorem \ref{geometric proof quasi WW_a tech gen_main theorem} and \ref{geometric proof quasi WW_introduction_Quasi-linearity of the water Wave system_theorem with surface tension}, given the local nature of the result we see that first we can work on balls centered at $0$ with radius $r$ small. Put
\begin{align*}
I_{s,T}&=\set{(\eta, \psi) \in C^0([0,T];H^{s+\frac{1}{2}} \times H^{s+\frac{1}{2}}  ), (V, B) \in C^0([0,T];H^{s} \times H^{s}), \exists c>0, a\geq c  },\\
I_{s,0}&=\set{(\eta_0, \psi_0) \in H^{s+\frac{1}{2}} \times H^{s+\frac{1}{2}}  , (V_0, B_0) \in H^{s} \times H^{s}, \exists c>0, a\geq c },
\end{align*}
henceforth we will be working on $ \b (0,r) \subset I_{s,T}$ and without loss of generality we suppose that $H_t$ is always verified on $[0,T]$, on that set.
\subsubsection{New change of unknowns}
\begin{lemma}\label{geometric proof quasi WW_quasi WW without ST_subsec new change of unkniwns_lem1}
Consider $\eps>0$ and $\omega \in C^\infty_0(\r^d)$ such that $\omega=1$ on $\b(0,1)$ and $\omega=0$ on $\r^d \setminus \b(0,2)$. Under the hypothesis of Proposition \eqref{geometric proof quasi WW_quasi WW without ST_subsec prereq paralin sym eq}, introduce the unknowns
\[
\begin{cases}
\tilde{\zeta}=(1-\omega(D))\nabla \eta, \\
 \tilde{U}=(1-\omega(D))(V+T_\zeta B), \\
 aux_1=\omega(D) \psi,\\
aux_2=\omega(D) \eta,
\end{cases}
 \text{ where, } 
\begin{cases}
B=(\partial_y \phi)_{|y=\eta}=\frac{\nabla \eta \cdot \nabla \psi +G(\eta) \psi}{1+|\nabla \eta|^2},\\
V=(\nabla_x \phi)_{|y=\eta}=\nabla\psi-B\nabla \eta.
\end{cases}\]
and set $\tilde{\theta}=T_q \tilde{\zeta}+\eps(I-T_1),$ where $q$ is defined in Proposition \eqref{geometric proof quasi WW_quasi WW without ST_subsec prereq paralin sym eq}.\\
Then $\tilde{\theta}, U,aux_1,aux_2 \in C^0([0,T];H^s )$ and 
\begin{equation}
\begin{cases}
\partial_t  \tilde{U} +T_V \cdot \nabla  \tilde{U}+T_\gamma \tilde{\theta}=f'_1,\\
\partial_t \tilde{\theta}+ T_V \cdot \nabla \tilde{\theta}- T_\gamma  \tilde{U}=f'_2,
\end{cases}
\end{equation}
with $f'_1,f'_2 \in L^\infty(0,T;H^s ),$ and $f'_1,f'_2$ have $C^1$ dependence on $(U,\theta)$ verifying:
\[
\norm{(f'_1,f'_2)}_{L^\infty(0,T;H^s )} \leq C (\norm{(\eta,\psi,V,B)}_{L^\infty(0,T;H^{s+\frac{1}{2}}\times H^{s+\frac{1}{2}}\times H^{s}\times H^{s})})
\]
\end{lemma}
\begin{proof}
Again the lemma simply follows from the fact that $I-T_1$ and $\omega(D)$ are regularizing operators.
\end{proof}
\subsubsection{Decomposing the change of variable:} 
Set 
\begin{align*}
\Phi:I_{s,T} \times H^{s} &\rightarrow C^0([0,T];H^s  )&\Phi:I_{s,0} \times H^{s} &\rightarrow H^s \\
(\eta,\psi) & \mapsto ( \tilde{U},\tilde{\theta},aux_1,aux_2)&
(\eta,\psi) & \mapsto ( \tilde{U},\tilde{\theta},aux_1,aux_2)
\end{align*}
The goal is to prove that $\Phi$ is locally invertible and then the proof will follow from Theorem \ref{geometric proof quasi WW_a tech gen_main theorem}. \\
We write $\Phi=\Phi_1 \circ \Phi_2$ with
\begin{align*}
\Phi_1:I_{s,T}&\rightarrow C^0([0,T];H^s \times H^{s-\frac{1}{2}} \times H^s \times H^s )\\
(\eta,\psi) & \mapsto ( \tilde{U}, \tilde{\zeta},aux_1,aux_2)
\end{align*}
and,
\begin{align*}
\Phi_2:C^0([0,T];H^s \times H^{s-\frac{1}{2}} \times H^s \times H^s ) &\rightarrow C^0([0,T];H^s)\\
( \tilde{U}, \tilde{\zeta},aux_1,aux_2) & \mapsto ( \tilde{U}, \tilde{\theta},aux_1,aux_2)
\end{align*}
We define $\Phi_1$ and $\Phi_2$ analogously when $\Phi$ is defined on $I_{s,0}$.
\begin{lemma}\label{geometric proof quasi WW_quasi WW without ST_subsec new change of unkniwns_lem2}
There exists $r,r_1,\eps>0$ such that:
\begin{align*}
\Phi_1: \b (0,r) \cap I_{s,T}  &\rightarrow C^0([0,T];H^s \times H^{s-\frac{1}{2}} \times H^s \times H^s )
\end{align*}
is a $C^\infty$ diffeomorphism upon it's image.
\begin{align*}
\Phi_2: \b (0,r_1) \cap C^0([0,T];H^s \times H^{s-\frac{1}{2}} \times H^s \times H^s) &\rightarrow C^0([0,T];H^s)
\end{align*}
is a $C^\infty$ diffeomorphism upon it's image.\\
Analogous result hold when $\Phi$ is defined on $I_{s,0}$.
\end{lemma}
The proof of Theorem \ref{geometric proof quasi WW_introduction_Quasi-linearity of the water Wave system_theorem without surface tension} follows as in the previous section from Corollary \ref{geometric proof quasi WW_a tech gen_cor theorem sys} and the previous Lemma combined with the fact that $\Phi_1(0)=0$ thus we have
\[\b(0,r_1) \cap C^0([0,T];H^s \times H^{s-\frac{1}{2}} \times H^s \times H^s )  \subset \Phi_1 \bigg( \b (0,r) \cap C^0([0,T];H^{s+\frac{1}{2}}\times H^{s+\frac{1}{2}} ) \bigg).\]
Also $\Phi_2(0)=0$ thus there exists $r_2$:
\[\b(0,r_2) \cap C^0([0,T];H^s ) \subset \Phi_2 \bigg(\b (0,r_1) \cap C^0([0,T];H^s \times H^{s-\frac{1}{2}} \times H^s \times H^s\bigg).\]
We now turn to the proof of the lemma.
\begin{proof}
As all of the estimates used are punctual in time thus the proof is the same for $I_{s,T}$ and $I_{s,0}$ and we only write the one for $I_{s,T}$.
We start by $\Phi_1$, first the part $\eta \mapsto (\tilde{\zeta},aux_2)$ is invertible with inverse
\[\fr[\Phi^{-1}_1(\tilde{\zeta},aux_2)](\xi)=\frac{1}{d}\sum_j(1-\omega(\xi))\frac{\fr[\partial_i{\tilde{\zeta}}](\xi)}{i\xi_j}+\omega(\xi)\fr[aux_2].\] 
By the same argument $\psi\mapsto( (1-\omega(D))\nabla \psi,\omega(D)\psi)$ is invertible and we see that $(\tilde{U},aux_1)$ is a perturbation of that map indeed:
\begin{align*}
\norm{( (1-\omega(D))\nabla \psi,\omega(D)\psi)-(\tilde{U},aux_1)}_{\mathcal{L}(H^{s+\frac{1}{2}},H^s)}
&\leq C(\norm{B}_{W^{\frac{1}{2},\infty}})\norm{\eta}_{H^{s+\frac{1}{2}}}\\
&\leq C(\norm{(\eta,\psi)}_{H^{s+\frac{1}{2}}})\norm{\eta}_{H^{s+\frac{1}{2}}}
\end{align*}
thus for $r$ small enough we get the desired result.\\
 Now we turn to $\Phi_2$. This operator is the identity on $\tilde{U},aux_1,aux_2$ thus we only have to work on $\tilde{\theta}$. Put $a_0$ as the Taylor coefficient associated to the solution of the problem (0,0).  Now notice that for $\eps>0$:
 \[T_{\sqrt{a_0}\abs{\xi}^{-\frac{1}{2}}} +\eps(I-T_I):C^0([0,T];H^{s-\frac{1}{2}}  ) \rightarrow C^0([0,T];H^s )\] is a $C^\infty$ diffeomorphism. And now we see that $T_q +\eps(I-T_1) $ is a perturbation of $T_{\sqrt{a_0}\abs{\xi}^{-\frac{1}{2}}}+\eps(I-T_1) $ indeed by symbolic calculus rules:
 \begin{align*}
\norm{T_q -T_{\sqrt{a_0}\abs{\xi}^{-\frac{1}{2}}} }_{\mathcal{L}(H^{s-\frac{1}{2}},H^s)}
								&\leq C(\norm{\eta}_{H^s})\norm{\eta}_{H^s},
\end{align*}
which gives the result by taking $r$ small.
\end{proof}

\appendix
\section{Pseudodifferential and Paradifferential operators}\label{paracomposition_section Notions of microlocal analysis}
In this paragraph we review classic notations and results about pseudodifferential and paradifferential calculus that we need in this paper.
We follow the presentations in \cite{Hormander71} , \cite{Taylor07}, and \cite{Metivier08} 
 which give an accessible presentation. 
\subsection{Notations and functional analysis}\label{paracomposition_section Notations and functional analysis}
We present the definitions of the functional spaces that will be used.\\
We will use the usual definitions and standard notations for the regular functions $C^k$, $C^k_0$ for those with compact support, the distribution space $\d'$,$\er'$ for those with compact support, $\d'^k$,$\er'^k$ for distributions of order k, Lebesgue spaces ($L^p$), Sobolev spaces ($H^s,W^{p,q}$) and the Schwartz class $\sr$ and it's dual $\sr'$. All of those spaces are equipped with their standard topologies.

We also recall the \textit{Landau notation} the expression $O_{\norm{ \ }}(X)$ is used 
to denote any quantity bounded in $\norm{ \ }$ by $C X$, thus $Y=O_{\norm{ \ }}(X)$ is equivalent to $\norm{Y}\leq C X$.

For the definition of the periodic symbol classes we will need the following definitions and notations.
\begin{notation}
We will use $\d$ to denote $\t$ or $\r$ and $\hat{\d}$ to denote their duals that is $\z$ in the case of $\t$ and $\r$ in the case of $\r$. For concision an integral on on $\z^d$ i.e $\displaystyle \int_{\z^d}$ should be understood as $\displaystyle \sum_\z^d$. A function $a$ is said to be in $ C^\infty(\t^d \times \z^d)$ if for every $\xi \in \z$ $a(\cdot,\xi) \in C^\infty(\t^d)$. For $\xi\in \z^d$ and $i \in \set{1,\cdots,d}$, $\partial_{\xi_i}$ should be understood as the partial forward difference operator, i.e
\[\partial_{\xi_i} a(\xi_1,\cdots,\xi_i,\cdots,\xi_d)=a(\xi_1,\cdots,\xi_i+1,\cdots,\xi_d)-a(\xi_1,\cdots,\xi_i,\cdots,\xi_d),\ \xi \in \z^d.\]
We recall the following simple identities for the Fourier transform on the Torus:
\[
\begin{cases}
\fr_{\t^d}(\partial_x^\alpha f)(\xi)=\xi^\alpha \fr_{\t^d}(f)(\xi), \xi \in \z^d,\\
\fr_{\t^d}((e^{-2i\pi x}-1)^\alpha f)(\xi)=\xi^\alpha \fr_{\t^d}(f)(\xi), \xi \in \z^d, \ x\in \t^d.
\end{cases}
\]
\end{notation}
\begin{definition}[Littlewood-Paley decomposition]
Pick $P_0\in C^\infty_0(\r^d)$ so that $P_0(\xi)=1$ for $\abs{\xi}<1$ and 0 for $ \abs{\xi}>2 $. We define a dyadic decomposition of unity by:
\[ \text{ for} k \geq 1, P_{\leq k}(\xi)=\Phi_0(2^{-k}\xi), \ P_k(\xi)=P_{\leq k}(\xi)-P_{\leq k-1}(\xi). \]
 Thus,\[ P_{\leq k}(\xi)=\sum_{0\leq j \leq k}P_j(\xi) \text{ and } 1=\sum_{j=0}^\infty P_j(\xi). \]
 Introduce the operator acting on $\mathscr S '(\r^d)$: 
 \[P_{\leq k}u=\fr^{-1}(P_{\leq k}(\xi)u) \text{ and } u_k=\fr^{-1}(P_k(\xi)u).\]
 Thus,
 \[u=\sum_k u_k.\]
 Finally put $\set{k\geq 1, C_k=\supp  P_k}$ the set of rings associated to this decomposition.
\end{definition}

\begin{remark}
An interesting property of the Littlewood-Paley decomposition is that even if the decomposed function is merely a distribution the terms of the decomposition are regular, indeed they all have compact spectrum and thus are entire functions. On classical functions spaces this regularization effect can be "measured" by the following inequalities due to Bernstein.
\end{remark}

\begin{proposition}[Bernstein's inequalities]\label{paracomposition_Notations and functional analysis_bernstein1}
Suppose that $a\in L^p(\r^d)$ has its spectrum contained in the ball $\set{\abs{\xi}\leq \lambda}$. Then $a\in C^\infty$ and for all $\alpha \in  \n^d$ and $q \geq p$, there is $C_{\alpha,p,q}$ (independent if $\lambda$) such that 
\[\norm{\partial^{\alpha}_x a}_{L^q} \leq C_{\alpha,p,q} \lambda^{\abs{\alpha}+\frac{d}{p}-\frac{d}{q}}\norm{a}_{L^p}.\]
In particular,
\[\norm{\partial^{\alpha}_x a}_{L^q} \leq C_{\alpha} \lambda^{\abs{\alpha}}\norm{a}_{L^p}, \text{ and for $p=2$, $p=\infty$}\]
\[\norm{a}_{L^\infty}\leq C \lambda^{\frac{d}{2}} \norm{a}_{L^2}.\]
\end{proposition}

\begin{proposition}\label{paracomposition_Notations and functional analysis_bernstein2}
For all $\mu >0$, there is a constant $C$ such that for all $\lambda>0$ and for all $\alpha \in W^{\mu,\infty}$ with spectrum contained in $\set{\abs{\xi}\geq \lambda}$. one has the following estimate: 
\[\norm{a}_{L^\infty}\leq C \lambda^{-\mu} \norm{a}_{W^{\mu,\infty}}.\]
\end{proposition}

\begin{definition-proposition}[Sobolev spaces on $\r^d$]
It is also a classical result that for $s\in \r$ :
\[H^s(\r^d)=\set{u\in\sr'(\r^d),\abs{u}_s= \bigg(\sum_q 2^{2qs} {\norm{u_q}_{L^2}}^2 \bigg)^{\frac{1}{2}}<\infty}\]
 with the right hand side equipped with its canonical topology giving it a Hilbert space structure and $\abs{\ }_s$ is equivalent to the usual norm on $\norm{\ }_{H^s}$ .\\
\end{definition-proposition}
\begin{proposition} \label{paracomposition_Notations and functional analysis_proposition Sobolev spaces on balls}
Let $\b$ be a ball with center 0. There exists a constant C such that for all $s>0$ and for all $(u_q)_{\in \n}\in \sr'(\r^d)$ verifying:
\[\forall q ,\supp \hat{u_q} \subset  2^q \b \text{ and } (2^{qs}\norm{u_q}_{L^2})_{q\in \n} \text{ is in} \ L^2(\n) \]
\[\text{then}, u=\sum_q u_q \in H^s(\r^d) \text{ and } \abs{u}_s \leq \frac{C}{1-2^{-s}} \bigg(\sum_q 2^{2qs} {\norm{u_q}_{L^2}}^2 \bigg)^{\frac{1}{2}}. \]
\end{proposition}
\begin{remark}
The previous definition and properties of the Littlewood-Paley decomposition and Sobolev spaces carries out naturally to $\t^d$.
\end{remark}
Here we recall the usual Kato-Ponce \cite{Kato88} commutator estimates:
\begin{proposition}\label{paracomposition_Notations and functional analysis_KatoPonce commutator estimate}
Consider $s>0$ and $f,g \in H^s$ then 
\[
\norm{[\D^s,f]g}_{L^2}\leq C(\norm{f}_{W^{1,\infty}}\norm{g}_{H^{s-1}}+\norm{f}_{H^s}\norm{g}_{L^\infty}).
\]
\end{proposition}
\subsection{Pseudodifferential operators}
We introduce here the basic definitions and symbolic calculus results. We first introduce the classes of regular symbols.
\begin{definition}\label{paracomposition_Notions of microlocal analysis_Pseudodifferential Calculus_def symbol}
Given $m \in \r,0\leq \rho \leq1$ and $0\leq \sigma \leq1$ we denote the symbol class $S^m_{\rho,\sigma}(\d^d \times \hat{\d}^d)$ the set of all $a\in C^\infty(\d^d \times \hat{\d}^d)$ such that for all multi-orders $\alpha,\beta$ we have the estimate:
\[\abs{\partial^{\alpha}_x\partial^{\beta}_\xi a(x,\xi)}\leq C_{\alpha,\beta}(1+\abs{\xi})^{m-\rho \beta+\sigma \alpha}.\]
$S^m_{\rho,\sigma}(\d^d \times \hat{\d}^d)$ is a Fr\'echet space with the topology defined by the family of semi-norms:
\[M^m_{\alpha,\beta}(a)=\sup_{i\leq \alpha,j\leq \beta}\sup_{\d^d\times\hat{\d}^d}\abs{\partial^{i}_x\partial^{j}_\xi a(x,\xi)(1+\abs{\xi})^{\rho j-m-\sigma i}}.\]
Set \[S^{m}(\d^d \times \hat{\d}^d)=S^m_{1,0}(\d^d \times \hat{\d}^d),\]
\[ S^{-\infty}(\d^d \times \hat{\d}^d)=\bigcap_{m\in \r}S^m(\d^d \times \hat{\d}^d) \text{ and } S^{+\infty}(\d^d \times \hat{\d}^d)=\bigcup_{m\in \r}S^m(\d^d \times \hat{\d}^d)  \]
equipped with their canonically induced topology. 
\end{definition}

For $u \in \sr(\d^d)$ we have 
\begin{equation*} 
\begin{split}
\op(a)u(x) &=(2\pi)^{-d}\int_{\hat{\d}^d}e^{ix.\xi}a(x,\xi)\hat{u}(\xi)d\xi \\
 & = (2\pi)^{-d}\int_{\hat{\d}^d}e^{ix.\xi}a(x,\xi)\int_{\d^d}e^{-iy.\xi}u(y)dy d\xi \\
 & = \int_{\d^d}\bigg((2\pi)^{-n}\int_{\hat{\d}^d}e^{i(x-y).\xi}a(x,\xi) d\xi \bigg)u(y)dy\\
\end{split}
\end{equation*}

Thus giving us the following Proposition.

\begin{proposition}
For $a \in S^m(\d^d \times \hat{\d}^d)$, $\op(a)$ has a kernel K defined by 
\begin{equation} \label{paracomposition_Notions of microlocal analysis_Pseudodifferential Calculus_Kernel equation of a pseudo operator} 
K(x,y)=(2\pi)^{-d}\int_{\hat{\d}^d}e^{i(x-y).\xi}a(x,\xi) d\xi=(2\pi)^{-n}\fr_\xi a (x,y-x).
\end{equation}

Which can be inverted to give:
\begin{align}
a(x,\xi)&=\fr_{y\rightarrow \xi}K(x,x-y)=\int_{\d^d}e^{-iy.\xi}K(x,x-y)dy \nonumber \\
&=(-1)^de^{-ix.\xi}\int_{\d^d}e^{iy.\xi}K(x,y)dy  \label{paracomposition_Notions of microlocal analysis_Pseudodifferential Calculus_Kernel equation of a pseudo operator inverse} 
\end{align}
\end{proposition}

\begin{definition}
Let $m \in \r$, an operator T is said to be of order m if, and only if, for all $\mu \in \r$, it is bounded from $H^\mu(\r^d)$ to $H^{\mu-m}(\r^d)$. 
\end{definition}

\begin{theorem}
If $a \in S^m(\d^d \times \hat{\d}^d)$, then $a(x,D)$ is an operator of order m. Moreover we have the norm estimate:
\[\norm{a(x,D)}_{H^\mu \rightarrow H^{\mu-m}}\leq C M^m_{\mu,m+d/2+1}(a)\]
\end{theorem}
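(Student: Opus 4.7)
The plan is to reduce the estimate to the $L^2\to L^2$ boundedness of order-zero pseudodifferential operators (a Calder\'on--Vaillancourt-type statement for the class $S^0$) and then prove that in turn via a Littlewood--Paley decomposition of the symbol in the $\xi$-variable.

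First I would reduce to the case $m=0$, $\mu=0$. Setting $\Lambda:=\langle D\rangle$, which has symbol $\langle\xi\rangle\in S^1$ and is an isomorphism $H^\sigma\to H^{\sigma-s}$ for every $s,\sigma\in\r$, the bound to prove is equivalent to the $L^2\to L^2$ boundedness of the composite $b(x,D):=\Lambda^{\mu-m}\,a(x,D)\,\Lambda^{-\mu}$. Using the standard asymptotic composition formula for pseudodifferential operators, which follows by inserting the kernel identity \eqref{Pseudo eq1} into the definition of the composition and expanding $a(x,\xi)$ around the output frequency, one gets $b\in S^0$ with each of its seminorms $M^0_{\beta',\alpha'}(b)$ controlled by finitely many seminorms of $a$ of type $M^m_{\beta'',\alpha''}(a)$, the indices $\alpha'',\beta''$ being bounded in terms of $\mu$, $m$, $d$, $\alpha'$, $\beta'$. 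It thus suffices to establish $\|\op(c)\|_{L^2\to L^2}\le C\,M^0_{0,N}(c)$ for some $N\sim d/2+1$ and every $c\in S^0$.

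For the $L^2$-boundedness I would apply a Littlewood--Paley decomposition of the symbol in frequency: $c(x,\xi)=\sum_{j\ge 0} c_j(x,\xi)$ with $c_j(x,\xi)=P_j^{(\xi)}c(x,\xi)$ supported in $|\xi|\sim 2^j$. Writing $\op(c_j)u=\op(c_j)\widetilde P_j u$ for a fattened frequency projector $\widetilde P_j$ with $\widetilde P_j P_j=P_j$, integrating by parts in $\xi$ in the kernel formula \eqref{Pseudo eq1} yields, for any $N\in\n$,
\[|K_j(x,y)|\le C_N\,M^0_{0,N}(c)\,2^{jd}\,(1+2^j|x-y|)^{-N},\]
so $\|K_j(x,\cdot)\|_{L^1}\lesssim M^0_{0,N}(c)$ uniformly in $j$, which by Schur's lemma gives $\|\op(c_j)\|_{L^2\to L^2}\lesssim M^0_{0,N}(c)$. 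To sum in $j$ one combines the almost-orthogonality of $(\widetilde P_j u)_j$ in $L^2$ (Plancherel) with a Cotlar--Stein-type almost-orthogonality for the family $(\op(c_j)\widetilde P_j)_j$, which is straightforward because the outputs have nearly disjoint frequency supports. This yields $\|\op(c)u\|_{L^2}\lesssim \|u\|_{L^2}$.

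The main obstacle is bookkeeping the number of $\xi$-derivatives needed and matching the exact seminorm index $m+d/2+1$ in the statement. A naive Schur estimate on the kernel alone forces $N>d$, i.e.\ about $d+1$ derivatives in $\xi$; the improvement down to $\sim d/2+1$ is the standard gain obtained by using $L^2$ orthogonality of the dyadic pieces in place of an $L^1$ triangle inequality in $j$. Tracing the constants through the symbolic-calculus reduction then accounts for the full index $M^m_{\mu,m+d/2+1}(a)$ appearing on the right-hand side.
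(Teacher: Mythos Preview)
The paper does not prove this theorem: it is quoted in Appendix~A as a classical background result, with references to H\"ormander, Taylor, and M\'etivier. Your overall route---reduce to $m=\mu=0$ by sandwiching with powers of $\langle D\rangle$, then prove $L^2$-boundedness for $S^0$ via a dyadic decomposition in $\xi$---is a standard one.

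There is, however, a real gap in the summation step. You assert that the outputs $\op(c_j)\widetilde P_j u$ ``have nearly disjoint frequency supports,'' which would make Cotlar--Stein immediate. This is false: applying $\op(c_j)$ to a function with spectrum in $\{|\xi|\sim 2^j\}$ does \emph{not} produce a frequency-localized output, because the $x$-dependence of $c_j(x,\xi)$ shifts frequencies. Indeed $\widehat{\op(c_j)v}(\eta)=\int \widehat{c_j}(\eta-\xi,\xi)\,\widehat v(\xi)\,d\xi$, and without decay of the $x$-Fourier transform $\widehat{c_j}(\cdot,\xi)$---equivalently, without control on $\partial_x^\alpha c$---the output frequency $\eta$ is unconstrained. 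Your kernel bound via integration by parts in $\xi$ uses only $\partial_\xi$-seminorms and gives the uniform estimate $\|\op(c_j)\|_{L^2\to L^2}\le C$; but uniform boundedness of the pieces together with orthogonality of the \emph{inputs} $\widetilde P_j u$ is not enough to bound $\sum_j \op(c_j)\widetilde P_j u$ in $L^2$.

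The missing ingredient is exactly the $x$-derivative control that the seminorm in the statement carries. In the paper's convention $M^m_{\beta,\alpha}$ counts $\alpha$ derivatives in $x$ and $\beta$ in $\xi$, so the second subscript $m+d/2+1$ is an $x$-derivative index, not a $\xi$-derivative one as your final paragraph assumes (and your kernel estimate actually uses $\partial_\xi^N c$, which is $M^0_{N,0}(c)$ in this notation, not $M^0_{0,N}(c)$). With roughly $d/2+1$ derivatives in $x$ one obtains $\langle\zeta\rangle$-decay of $\widehat c(\zeta,\xi)$ uniformly in $\xi$, which either restores genuine almost-orthogonality of the pieces $\op(c_j)$ or, more directly, allows a Schur-type estimate on the Fourier side that bypasses the dyadic decomposition altogether.
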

We will now present the main results in symbolic calculus associated to pseudodifferential operators.
\begin{theorem} \label{paracomposition_Notions of microlocal analysis_Pseudodifferential Calculus_theorem symbolic calclus} 
Let $m,m' \in \r$, $a \in S^m(\d^d \times \hat{\d}^d)$and $b \in S^{m'}(\d^d \times \hat{\d}^d)$. 
\begin{itemize}
\item Composition: Then $\op(a)\circ \op(b)$ is a pseudodifferential operator of order $m+m'$ with symbol $a \#b $ defined by:
\[a \#b(x,\xi)=(2\pi)^{-d}\int_{\d^d \times \hat{\d}^d}e^{i(x-y).(\xi-\eta)} a(x,\eta)b(y,\xi)dyd\eta\]
Moreover,
\[\op(a)\circ \op(b)(x,\xi)-\op(\sum_{\abs{\alpha}<k}\frac{1}{i^{\abs{\alpha}}\alpha!}(\partial^\alpha_\xi a(x,\xi))(\partial^\alpha_x b(x,\xi)))  \text{ is of order $m+m'-k$} \]
for all $k\in \n$.
\item  Adjoint: The adjoint operator of $\op(a)$, $\op(a)^\top$ is a pseudodifferential operator of order m with  symbol $a^\top$ defined by:
\[a^\top(x,\xi)=(2\pi)^{-d}\int_{\d^d\times \hat{\d}^d}e^{-iy.\xi} \bar{a}(x-y,\xi-\eta)dyd\eta\]
Moreover,
\[\op(a^\top)(x,\xi)-\op(\sum_{\abs{\alpha}<k}\frac{1}{i^{\abs{\alpha}}\alpha!}(\partial^\alpha_\xi \partial^\alpha_x  \bar{a}(x,\xi)))  \text{ is of order $m-k$} \]
for all $k\in \n$.
\end{itemize}

\end{theorem}
\begin{definition}
Let $(a_j)\in S^{m_j}(\d^d \times \hat{\d}^d)$ be a series of symbols with $(m_j) \in \r$ decreasing to $-\infty$. We say that $a\in S^{m_0}(\d^d \times \hat{\d}^d)$ is the asymptotic sum of $(a_j)$ if
\[\forall k \in \n, a-\sum_{j=0}^k a_j \in S^{m_{k+1}}(\d^d \times \hat{\d}^d). \]
We denote $a \sim \sum a_j $
\end{definition}

\begin{remark}
We can now write simply:
\[a \#b \sim \sum_{\abs{\alpha}}\frac{1}{i^{\abs{\alpha}}\alpha!}(\partial^\alpha_\xi a(x,\xi))(\partial^\alpha_x b(x,\xi)) \]
and
\[a^\top \sim \sum_{\abs{\alpha}}\frac{1}{i^{\abs{\alpha}}\alpha!}(\partial^\alpha_\xi \partial^\alpha_x  \bar{a}(x,\xi)) .\]
\end{remark}  

Now we present the classic results of change of variables in pseudodifferential operators.
\begin{theorem} \label{paracomposition_section Pull-back of pseudo and para- differential operators_theorem change of variable pseudo} 
Let $\chi:\d^d \rightarrow \d^d$ be a $C^\infty$ diffeomorphism with $D\chi \in C^\infty_b$ and $A=a(x,D) \in S^m_{loc}(\Omega' \times \r^d)$ a properly supported pseudodifferential operator with kernel K.\\
Then the operator $A^*$  defined by $K^*$ i.e:
\[\forall u \in C^\infty_0(\Omega), A^*u =\int_{\d^d}K(\chi(x),\chi(y))u(y)|detD\chi(y)|dy \] 
is a properly supported pseudodifferential operator with symbol
\[a^*(x,\xi)=(-1)^de^{-ix.\xi}\int_{\d^d\times \hat{\d}^d} a(\chi(x),\eta) e^{i(\chi(x)-\chi(y).\eta+iy.\xi}|det D\chi(y)|dyd\eta \in  S^m(\d^d \times \hat{\d}^d),\]
and verifies
\[(\op(a)u)\circ \chi=\op(a^*)(u\circ \chi).\]
An expansion of $a^*$ is given by:
\begin{equation}\label{paracomposition_section Pull-back of pseudo and para- differential operators_theorem change of variable pseudo eq1}
a^*(x,\xi) \sim \sum_{\alpha}\frac{1}{\alpha!}\partial^\alpha a(\chi(x),D\chi^{-1}(\chi(x))^\top\xi)P_{\alpha}(\chi(x),\xi),
\end{equation}
where,
\[ P_{\alpha}(x',\xi)=D^\alpha_{y'}(e^{i(\chi^{-1}(y')-\chi^{-1}(x')-D \chi^{-1}(x')(y'-x')).\xi})_{|y'=x'} \]
and $P_{\alpha}$ is polynomial in $\xi$ of degree $\leq \frac{\abs{\alpha}}{2}$, with $P_{0}=1, P_{1}=0$.
\end{theorem}

\subsection{Paradifferential operators}
We start by the definition of symbols with limited spatial regularity. Let $\w\subset \sr'(\d^d)$ be a Banach space.
\begin{definition}
Given $m \in \r$, $\Gamma^m_\w(\d^d)$ denotes the space of locally bounded functions $a(x,\xi)$ on $\d^d\times (\hat{\d}^d \setminus 0)$, which are $C^\infty$ with respect to $\xi$ for $\xi \neq 0$ and such that, for all $\alpha \in \n^d$ and for all $\xi \neq 0$, the function $x \mapsto \partial^\alpha_\xi a(x,\xi)$ belongs to $\w$ and there exists a constant $C_\alpha$ such that,
\[\forall \abs{\xi}>\frac{1}{2}, \norm{\partial^\alpha_\xi a(.,\xi)}_{\w}\leq C_\alpha (1+\abs{\xi})^{m-\abs{\alpha}} \]
\end{definition}
Given a symbol $a$, define the paradifferential operator $T_a$ by
\[\widehat{T_a u}(\xi)=(2\pi)^{-d}\int_{\hat{\d}^d}\theta(\xi-\eta,\eta)\hat{a}(\xi-\eta,\eta)\psi(\eta)\hat{u}(\eta)d\eta,\]
where $\hat{a}(\eta,\xi)=\int e^{-ix.\eta}a(x,\xi)dx$ is the Fourier transform of $a$ with respect to the first variable; $\theta$ and $\psi$ are two fixed $C^\infty$ functions such that:
\[ \psi(\eta)=0 \text{ for } \abs{\eta} \leq 1, \ \psi(\eta)=1 \text{ for } \abs{\eta}\geq 2, \] 
and $\theta(\xi,\eta)$ is homogeneous of degree 0 and satisfies for $0<\eps_1<\eps_2$ small enough,
\[\theta(\xi,\eta)=1 \text{ if } \abs{\xi}\leq \eps_1 \abs{\eta}, \ \theta(\xi,\eta)=0 \text{ if } \abs{\xi}\geq \eps_2 \abs{\eta}.\]

For quantitative estimates we introduce as in \cite{Metivier08}:
\begin{definition}
For $m\in \r$, $\rho \geq 0$ and $a \in \Gamma^m_\w(\d^d)$, we set
\[M^m_\w(a)=\sup_{\abs{\alpha}\leq \frac{d}{2}+1+c} \sup_{\abs{\xi}\geq\frac{1}{2}}\norm{(1+\abs{\xi})^{m-\abs{\alpha}}\partial^\alpha_\xi a(.,\xi)}_{\w}, \text{ with } c>0.\]
We will essentially work with $\w=W^{\rho,\infty}$ and write $M^m_{W^{\rho,\infty}}(a)=M^m_\rho(a)$ with $c=\rho$.
\end{definition}
The main features of symbolic calculus for paradifferential operators are given by the following Theorems.
\begin{theorem}
Let $m \in \r$. if $a\in \Gamma^m_0(\d^d)$, then $T_a$ is of order m. Moreover, for all $\mu \in \r$ there exists a constant K such that
\[\norm{T_a}_{H^\mu \rightarrow H^{\mu-m}}\leq K M^m_0(a).\]
\end{theorem}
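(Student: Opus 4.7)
The plan is to prove the estimate by a Littlewood--Paley analysis of $T_a$, exploiting the spectral cutoffs $\theta$ and $\psi$ built into the definition to reduce matters to a dyadic family of pseudodifferential estimates.

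First I would exploit the crucial spectral localization: if $u_q=\fr^{-1}(P_q \hat u)$ has Fourier support in the dyadic ring $C_q$ around $|\eta|\sim 2^q$, then
\[
\widehat{T_a u_q}(\xi)=(2\pi)^{-d}\int \theta(\xi-\eta,\eta)\hat a(\xi-\eta,\eta)\psi(\eta)\hat u_q(\eta)\,d\eta
\]
is nonzero only where $|\xi-\eta|\le \eps_2|\eta|$ by the support of $\theta$ and $\eta\in C_q$. This forces $(1-\eps_2)|\eta|\le|\xi|\le(1+\eps_2)|\eta|$, so $T_a u_q$ is spectrally localized in an enlarged ring $\widetilde C_q$ of size $\sim 2^q$. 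By Proposition \ref{prop2} and the equivalent Littlewood--Paley characterization of $H^{\mu-m}$, it then suffices to establish the pointwise dyadic bound
\[
\|T_a u_q\|_{L^2}\le C\,M^m_0(a)\,2^{qm}\,\|u_q\|_{L^2},
\]
since summing $2^{2q(\mu-m)}\|T_a u_q\|_{L^2}^2$ against $\sum_q 2^{2q\mu}\|u_q\|_{L^2}^2\sim\|u\|_{H^\mu}^2$ yields the result.

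For the dyadic bound, I would write $T_a u_q=\op(b_q)u_q$, where the smoothed symbol
\[
b_q(x,\xi)=(2\pi)^{-d}\int e^{ix\cdot\zeta}\theta(\zeta,\xi)\hat a(\zeta,\xi)\psi(\xi)\,d\zeta
\]
is obtained by applying the $\theta$-cutoff in the $x$-frequency variable. By construction $b_q(\cdot,\xi)$ is smooth in $x$ with $x$-spectrum contained in the ball of radius $\eps_2|\xi|$, and coincides with $a(x,\xi)\psi(\xi)$ up to convolution with a smooth kernel. Using only the semi-norm $M^m_0(a)$, one obtains estimates of the form
\[
|\partial_x^\alpha\partial_\xi^\beta b_q(x,\xi)|\le C_{\alpha,\beta}\,M^m_0(a)\,(1+|\xi|)^{m-|\beta|+|\alpha|},
\]
with $|\beta|\le \tfrac{d}{2}+1+c$. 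Restricting $\xi$ to the ring $\widetilde C_q$, where $|\xi|\sim 2^q$, I would then estimate the Schwartz kernel $K_q(x,y)$ of $\op(b_q)$ by integration by parts in $\xi$ using at most $\lfloor d/2\rfloor+1$ derivatives (which is allowed since $c>0$), producing
\[
\int|K_q(x,y)|\,dy+\sup_y\int|K_q(x,y)|\,dx\le C\,M^m_0(a)\,2^{qm}.
\]
Schur's test then yields the desired $L^2$ bound on $\op(b_q)$ applied to $u_q$.

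The hard step here is the kernel estimate: one must verify that the finite number of $\xi$-derivatives controlled by $M^m_0(a)$ (specifically $|\alpha|\le \tfrac{d}{2}+1+c$) suffices to produce a kernel with $L^1$ bounds in both variables. This is precisely why the definition of $M^m_\w$ includes the offset $c>0$; once this is granted, the remainder of the argument is bookkeeping through the Littlewood--Paley decomposition and an elementary treatment of the low-frequency block $q=0$, which is harmless since $\psi$ vanishes for $|\eta|\le 1$.
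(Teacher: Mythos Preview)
The paper does not prove this theorem; it is stated without proof in Appendix~\ref{AP1} as a standard result of paradifferential calculus, with the reader referred to M\'etivier~\cite{Metivier08}. So there is no ``paper's own proof'' to compare against.

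Your outline is essentially the standard argument (as in M\'etivier's book): spectrally localize $u$ dyadically, use the cutoff $\theta$ to see that $T_a u_q$ is again spectrally localized near $|\xi|\sim 2^q$, and then prove the $L^2\to L^2$ bound $\|T_a u_q\|_{L^2}\le C M^m_0(a)2^{qm}\|u_q\|_{L^2}$ via a kernel estimate on the dyadic piece. One point deserves more care: with only $N_0=\lfloor d/2\rfloor+1$ derivatives in $\xi$ available from $M^m_0(a)$, plain integration by parts gives a pointwise kernel bound $|K_q(x,y)|\lesssim 2^{q(m+d)}(1+2^q|x-y|)^{-N_0}$, which is \emph{not} integrable in $y$ when $d\ge 2$. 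The correct step is the Cauchy--Schwarz/Plancherel trick
\[
\int |K_q(x,y)|\,dy\le \big\|\langle 2^q(x-y)\rangle^{-N_0}\big\|_{L^2_y}\,\big\|\langle 2^q(x-y)\rangle^{N_0}K_q(x,\cdot)\big\|_{L^2_y},
\]
which needs exactly $N_0>d/2$ and yields $\sup_x\int|K_q(x,y)|\,dy\le C M^m_0(a)2^{qm}$. The companion bound $\sup_y\int|K_q(x,y)|\,dx$ does not follow from the same computation because $b$ depends on $x$; one obtains it by passing to the adjoint, whose smoothed symbol satisfies the same $M^m_0$ bounds thanks to the spectral confinement of $b(\cdot,\xi)$ in $|\zeta|\le\eps_2|\xi|$ (Bernstein). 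With that adjustment your sketch is correct.
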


\begin{theorem} \label{paracomposition_Notions of microlocal analysis_Paradifferential Calculus_symbolic calculus para} 
Let $m,m' \in \r$, and $\rho>0$, $a \in \Gamma^m_\rho(\d^d)$and $b \in \Gamma^{m'}_\rho(\d^d)$. 
\begin{itemize}
\item Composition: Then $T_a T_b$ is a paradifferential operator of order $m+m'$ and $T_a T_b- T_{a\#b}$ is of order $m+m'-\rho$ where $a \#b $ is defined by:
\[a \#b=\sum_{\abs{\alpha}<\rho }\frac{1}{i^{\abs{\alpha}}\alpha!} \partial^\alpha_\xi a \partial^\alpha_x b \]
Moreover, for all $\mu \in \r$ there exists a constant K such that
\[ \norm{T_aT_b-T_{a\#b}}_{H^\mu \rightarrow H^{\mu-m-m'+\rho}} \leq K M^m_\rho (a) M^{m'}_\rho(b). \]

\item  Adjoint: The adjoint operator of $T_a$, $T_a^\top$ is a paradifferential operator of order m with  symbol $a^\top$ defined by:
\[a^\top=\sum_{\abs{\alpha}<\rho} \frac{1}{i^{\abs{\alpha}}\alpha!}\partial^\alpha_\xi \partial^\alpha_x \bar{a} \]
Moreover, for all $\mu \in \r$ there exists a constant K such that
\[ \norm{T_a^\top-T_{a^\top}}_{H^\mu \rightarrow H^{\mu-m+\rho}} \leq K M^m_\rho (a). \]
\end{itemize}
\end{theorem}

If $a=a(x)$ is a function of $x$ only, the paradifferential operator $T_a$ is called a paraproduct. 
It follows from Theorem \ref{paracomposition_Notions of microlocal analysis_Paradifferential Calculus_symbolic calculus para} and the Sobolev embeddings that:
\begin{itemize}
\item If $a \in H^\alpha(\d^d)$ and $b \in H^\beta(\d^d)$ with $\alpha,\beta>\frac{d}{2}$, then
\[T_aT_b-T_{ab} \text{ is of order } -\bigg( min\set{\alpha,\beta}-\frac{d}{2} \bigg).\]
\item If $a \in H^\alpha(\d^d)$ with $\alpha>\frac{d}{2}$, then
\[T_a^\top-T_{a^\top} \text{ is of order } -\bigg(\alpha-\frac{d}{2} \bigg).\]
\item If $a \in W^{r,\infty}(\d^d)$, $r\in \n$ then:
\[\norm{au-T_au}_{H^r} \leq C \norm{a}_{W^{r,\infty}} \norm{u}_{L^2}.\]
\end{itemize}

An important feature of paraproducts is that they are well defined for function $a=a(x)$ which are not $L^\infty$ but merely in some Sobolev spaces $H^r$ with $r<\frac{d}{2}$.
\begin{proposition}
Let $m>0$. If $a\in H^{\frac{d}{2}-m}(\d^d)$ and $u \in H^\mu(\d^d)$ then $T_au \in  H^{\mu-m}(\d^d)$. Moreover,
\[ \norm{T_a u}_{H^{\mu -m}}\leq K \norm{a}_{H^{\frac{d}{2} -m}}\norm{u}_{H^{\mu}} \]
\end{proposition}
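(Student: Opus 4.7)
The plan is to reduce the estimate to an almost-orthogonal dyadic block decomposition of $T_a u$ and to gain the loss of $m$ derivatives from $a$ using Bernstein's inequality, which is exactly where the positivity hypothesis $m>0$ enters. Choose $N_0 \in \mathbb{N}$ depending only on the cutoffs $\theta,\psi$ in the definition of $T_a$ so that, up to an error of order $-\infty$ that is harmless for the statement, one has
\[
T_a u \;=\; \sum_{k\ge 0} P_{\le k-N_0}(a)\, u_k,
\]
where $u_k = \mathscr{F}^{-1}(P_k \hat{u})$ (the usual dyadic pieces defined in the appendix) and $P_{\le k-N_0}$ is the low-frequency cutoff. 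The frequency localization built into $\theta$ guarantees that each summand $P_{\le k-N_0}(a)\,u_k$ has spectrum contained in a fixed annulus of size $\sim 2^{k}$, so by Proposition \ref{prop2} it suffices to show
\[
\left(\sum_{k\ge 0} 2^{2k(\mu-m)} \bigl\|P_{\le k-N_0}(a)\,u_k\bigr\|_{L^2}^{2}\right)^{\!1/2}
\;\lesssim\; \|a\|_{H^{d/2-m}}\,\|u\|_{H^{\mu}}.
\]

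The key step is an $L^\infty$ bound on the low-frequency truncation of $a$. Writing $P_{\le k-N_0}(a)=\sum_{j \le k-N_0} a_j$ and applying Bernstein's inequality (Proposition \ref{bernstein1}) on each dyadic piece, one has $\|a_j\|_{L^\infty}\lesssim 2^{jd/2}\|a_j\|_{L^2}$. Since $a\in H^{d/2-m}$, one can write $\|a_j\|_{L^2} = 2^{-j(d/2-m)} c_j$ with $(c_j)\in \ell^2$ and $\|(c_j)\|_{\ell^2}\lesssim \|a\|_{H^{d/2-m}}$. Hence
\[
\|P_{\le k-N_0}(a)\|_{L^\infty}
\;\lesssim\; \sum_{j\le k-N_0} 2^{jm} c_j
\;\lesssim\; 2^{km}\,\|a\|_{H^{d/2-m}},
\]
the last inequality coming from Cauchy--Schwarz against the geometric sequence $(2^{(j-k)m})_{j\le k}$, which is summable precisely because $m>0$. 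Together with the trivial estimate $\|P_{\le k-N_0}(a)\,u_k\|_{L^2}\le \|P_{\le k-N_0}(a)\|_{L^\infty}\,\|u_k\|_{L^2}$ this gives
\[
2^{k(\mu-m)}\bigl\|P_{\le k-N_0}(a)\,u_k\bigr\|_{L^2}
\;\lesssim\; \|a\|_{H^{d/2-m}}\, 2^{k\mu}\,\|u_k\|_{L^2},
\]
and squaring and summing in $k$ yields the claimed bound since $\bigl(\sum_k 2^{2k\mu}\|u_k\|_{L^2}^2\bigr)^{1/2}\simeq \|u\|_{H^\mu}$.

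The only subtle point is the gain of a factor $2^{km}$ in the $L^\infty$-norm of $P_{\le k-N_0}(a)$: when $m=0$ this chain of inequalities would only give a logarithmic divergence, which is the standard obstruction that prevents $a\in H^{d/2}$ from yielding an $L^\infty$ paraproduct symbol. The positivity hypothesis $m>0$ makes the geometric series summable and is the crucial structural assumption.
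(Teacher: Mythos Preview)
Your proof is correct and follows the standard Littlewood--Paley argument for this estimate. Note, however, that the paper does not actually prove this proposition: it is stated in the appendix as part of a review of classical paradifferential calculus, with the reader referred to \cite{Metivier08} and related sources. So there is no ``paper's own proof'' to compare against; what you have written is precisely the textbook argument one finds in those references.
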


A main feature of paraproducts is the existence of paralinearisation Theorems which allow us to replace nonlinear expressions by paradifferential expressions, at the price of error terms which are smoother than the main terms.

\begin{theorem} \label{paracomposition_Notions of microlocal analysis_Paradifferential Calculus_paralinearisation para product} 
Let $\alpha, \beta \in \r $ be such that $\alpha,\beta> \frac{d}{2}$, then
\begin{itemize}
\item Bony's Linearization Theorem\footnote{In our recent work \cite{Ayman18} we give a generalization to this Theorem.} For all $C^\infty$ function F, if $a \in H^\alpha (\d^d)$ then
\[ F(a)- F(0)-T_{DF(a)}a \in H^{2\alpha-\frac{d}{2}} (\d^d). \]
\item If $a\in H^\alpha(\d^d)$ and $b\in H^\beta(\d^d)$, then $ab-T_ab-T_ba \in H^{\alpha+ \beta-\frac{d}{2}} (\d^d)$. Moreover there exists a positive constant K independent of a and b such that:
\[\norm{ab-T_ab-T_ba}_{H^{\alpha+ \beta-\frac{d}{2}} }\leq K  \norm{a}_{H^\alpha} \norm{b}_{H^\beta}  .\]
\end{itemize}
\end{theorem}

\subsection{Paracomposition}
We recall the main properties of the paracomposition operator first introduced by S. Alinhac in \cite{Alinhac86} to treat low regularity change of variables. Here we present the results we reviewed and generalized in some cases in \cite{Ayman18}.
\begin{theorem} \label{paracomposition_section Paracomposition_subsec paracomp on the euclidean space_theorem defintion of paracomposition}
 Let $\chi:\d^d \rightarrow \d^d$ be a $W^{1+r,\infty}_{loc}$ diffeomorphism with $D\chi \in W^{r,\infty}$, $r>0, r\notin \n$ and take $s \in \r$ then the following maps are continuous:  
   \begin{align*}
   H^s(\d^d) &\rightarrow H^s(\d^d)\\
  u &\mapsto \chi^* u=\sum_{k\geq 0}  \sum_{\substack{l\geq 0 \\ k-N \leq l \leq k+N}}P_l(D)u_k\circ \chi,
\end{align*} 
where $N \in \n$ is chosen such that $2^{N}>sup_{k,\d^d} \abs{\Phi_k D\chi}^{-1}$ and $2^{N}>sup_{k,\d^d} \abs{\Phi_k D\chi}$.

Taking $\tilde{\chi}:\d^d \rightarrow \d^d$ a $C^{1+\tilde{r}}$ diffeomorphism with $D\chi \in W^{\tilde{r},\infty}$ map with $\tilde{r}>0$, then the previous operation has the natural fonctorial property:
\[\forall u \in H^s(\d^d) , \chi^* \tilde{\chi}^* u= ({\chi \circ \tilde{\chi}})^* u +Ru,\]   
 \[\text{with, }  R:H^{s}(\r^d) \rightarrow H^{s+min(r,\tilde{r})}(\r^d) \text{ continous}.\]
  \end{theorem}
We now give the key paralinearization theorem taking into account the paracomposition operator. 
\begin{theorem}  \label{paracomposition_section Paracomposition_subsec paracomp on the euclidean space_theorem paralinearisation of composition}
 Let $u$ be a $W^{1,\infty}(\d^d)$ map and $\chi:\d^d \rightarrow \d^d$ be a $W^{1+r,\infty}_{loc}$ diffeomorphism with $D\chi \in W^{r,\infty}$, $r>0, r\notin \n$. Then:
 \[u \circ \chi(x)=\chi^* u(x)+ T_{Du\circ \chi}\chi(x)+ R_0(x)+R_1(x)+R_2(x)\]
 where the paracomposition given in the previous Theorem verifies the estimates:
 \[\forall s \in \r, \norm{\chi^* u(x)}_{H^s}\leq C(\norm{D\chi}_{\infty})\norm{u(x)}_{H^s},\]
 \[u'\circ \chi \in  \Gamma^0_{W^{0,\infty}(\d^d)}(\d^d) \text{ for $u$  Lipchitz,}   \]
and the remainders verify the estimates: 

\[  \norm{R_0}_{H^{1+r +min(1+\rho,s-\frac{d}{2})}} \leq C\norm{D\chi}_{r}\norm{u}_{H^{1+s}} \]
	\[ \norm{R_1}_{H^{1+r+s}} \leq C(\norm{D\chi}_{\infty}) \norm{D\chi}_{r}\norm{u}_{H^{1+s}}. \]
	\[ \norm{R_2}_{H^{1+r+s}} \leq C(\norm{D\chi}_{\infty},\norm{D\chi^{-1}}_{\infty}) \norm{D\chi}_{r}\norm{u}_{H^{1+s}}. \]
	
	Finally the commutation between a paradifferential operator $a \in \Gamma^m_{\beta}(\d^d)$ and a paracomposition operator $\chi^*$ is given by the following
\[
	 \chi^* T_a u =T_{a^*} \chi^* u+T_{{q}^*} \chi^* u  \text{ with } q \in \Gamma^{m-\beta}_{0}(\d^d),
\]
where $a^*$ has the local expansion:
\begin{equation}\label{paracomposition_section Pull-back of pseudo and para- differential operators_theorem change of variable para eq1}
a^*(x,\xi) \sim \sum_{\substack{ \alpha \\ \abs{\alpha}\leq \lfloor min(r,\rho) \rfloor}}\frac{1}{\alpha!}\partial^\alpha a(\chi(x),D\chi^{-1}(\chi(x))^\top\xi)P_{\alpha}(\chi(x),\xi)\in \Gamma^m_{\min(r,\beta)}(\d^d),
\end{equation}
where,
\[ P_{\alpha}(x',\xi)=D^\alpha_{y'}(e^{i(\chi^{-1}(y')-\chi^{-1}(x')-D \chi^{-1}(x')(y'-x')).\xi})_{|y'=x'} \]
and $P_{\alpha}$ is polynomial in $\xi$ of degree $\leq \frac{\abs{\alpha}}{2}$, with $P_{0}=1, P_{1}=0$.
\end{theorem} 
\begin{remark} \label{paracomposition_section Paracomposition_subsec paracomp on the euclidean space_rem simplest example}
The simplest example for the paracomposition operator is when $\chi(x)=Ax$ is a linear operator and in that case we see that if $N$ is chosen sufficiently large in the definition:
\[u(Ax) = (Ax)^*u,\text{ and } T_{u'(Ax)}Ax = 0.\]
\end{remark}

\section{Energy estimates and well-posedness of some pulled back hyperbolic equations}\label{geometric proof quasi WW_Appendix energy est of pulled back eq}
\begin{theorem} \label{geometric proof quasi WW_Appendix energy est of pulled back eq_theorem on para eq}
Let $T>0$, $\chi \in W^{1,\infty}([0,T], W^{1,\infty}_{loc}(\d^d))$ with $D_x\chi \in L^{,\infty}([0,T],L^{\infty}(\d^d))$ and consider $(a_t)_{t\in \r}$ a family of symbols in $\Gamma^\beta_1(\d^d)$ with $\beta \in \r$, such that
 $t \mapsto a_t$ is continuous and bounded from $\r$ to $\Gamma^\beta_1(\d^d)$
  and such that $ Re(a_t)=\frac{a_t+a_t^\top}{2}$ is bounded in $\Gamma^0_1(\d^d)$.
  Suppose moreover that $\chi(t,\cdot)$ is a diffeomorphism between open sets of $\d^d$ and that we have the bounds:
  \begin{equation}\label{geometric proof quasi WW_Appendix energy est of pulled back eq_theorem on para eq_eq 1}
  \exists C>0, \forall t \leq T,\forall x, \ C^{-1}\leq \abs{ D_x \chi(t,x)}\leq C.\\
 \end{equation}
Put $(\cdot)^*  $ is the change of variables by $\chi$ as presented in Theorem \ref{paracomposition_section Paracomposition_subsec paracomp on the euclidean space_theorem paralinearisation of composition}.\\
   Then for all initial data $u_0 \in H^s(\d^d)$ and $f \in  C^0([0,T];H^s(\d^d))$ the Cauchy problem:
 \begin{equation} \label{geometric proof quasi WW_Appendix energy est of pulled back eq_theorem on para eq_eq 2} 
\begin{cases}
 \partial_t u+T_{a^*}u=f\\
 \forall x \in \d^d, u(0,x)=u_0(x)
\end{cases}
\end{equation}
has a unique solution $u \in C^0([0,T];H^s(\d^d))\cap C^1([0,T];H^{s-\beta}(\d^d))$ which verifies the estimates:
  \begin{align}\label{geometric proof quasi WW_Appendix energy est of pulled back eq_theorem on para eq_eq 3}
\norm{u(t)}_{H^s}&\leq e^{C(\norm{D_x\chi}_{L^\infty L^\infty},\norm{D_x\chi^{-1}}_{L^\infty L^\infty},M_0^0(Re(a)))t}\norm{u_0}_{H^s}\\
&+2\int_0^te^{C(\norm{D_x\chi}_{L^\infty L^\infty},\norm{D_x\chi^{-1}}_{L^\infty L^\infty},M_0^0(Re(a)))(t-t')}\norm{f(t')}_{H^s}dt'. \nonumber
\end{align}
Again fixing the initial data at 0 is an arbitrary choice.
More precisely, $\forall 0 \leq t_0\leq T$ and all data \\ $u_0 \in H^s(\d^d)$  the Cauchy problem:
 \begin{equation} \label{geometric proof quasi WW_Appendix energy est of pulled back eq_theorem on para eq_eq 4} 
\begin{cases}
 \partial_t u+T_{a^*}u=f\\
 \forall x \in \d^d, u(t_0,x)=u_0(x)
\end{cases}
\end{equation}
has a unique solution $u \in C^0([0,T];H^s(\d^d))\cap C^1([0,T];H^{s-\beta}(\d^d))$ which verifies the estimate:
\begin{align*}
\norm{u(t)}_{H^s}&\leq e^{C(\norm{D_x\chi}_{L^\infty L^\infty},\norm{D_x\chi^{-1}}_{L^\infty L^\infty},M_0^0(Re(a)))\abs{t-t_0}}\norm{u_0}_{H^s}\\
&+2\abs{\int_{t_0}^te^{C(\norm{D_x\chi}_{L^\infty L^\infty},\norm{D_x\chi^{-1}}_{L^\infty L^\infty},M_0^0(Re(a)))(t-t')}\norm{f(t')}_{H^s}dt'}.
\end{align*}
\end{theorem} 
\begin{proof} 
The existence of a solution follows from standard compacity arguments after regularization given the priory estimates \eqref{geometric proof quasi WW_Appendix energy est of pulled back eq_theorem on para eq_eq 3}. 
Also, the equation being linear those estimates give the unicity immediately. Thus we will only show the desired priory estimates. \\
Put $\Gamma_s= \D^s$, we will compute $\frac{d}{dt}({ \Gamma_s }^* u,{ \Gamma_s }^* u)_{L^2(\d^d,\abs{D_x\chi(t,x)}dx)}$ in two different ways.
\begin{itemize}
\item \textbf{Method 1.}
 First notice that by Theorem \ref{paracomposition_section Paracomposition_subsec paracomp on the euclidean space_theorem paralinearisation of composition}
\[
{ \Gamma_s }^*(x,\xi) \sim ([D\chi^{-1}(t,\chi(t,x))]^t\xi)^s+R
\]
Where R is of order $s-1$.\\ 
Thus using the lower and upper bound on $\abs{D\chi(t,x)}$ combined with upper bound on $\frac{d}{dt}\abs{D\chi(t,x)}$ we have:
\begin{align*}
&C(\norm{D_x\chi^{-1}}_{L^\infty L^\infty})\frac{d}{dt}[( \Gamma_s u, \Gamma_s u)_{L^2}]-C(\norm{D_x\chi}_{L^\infty L^\infty})\norm{ \Gamma_s u}^2_{L^2}
\\
&\leq\frac{d}{dt}({ \Gamma_s }^* u,{ \Gamma_s }^* u)_{L^2(\abs{D_x\chi(t,x)}dx)}.
\end{align*}
\item \textbf{Method 2.} Now we use the PDE,
 \begin{align*}
&\frac{d}{dt}({ \Gamma_s }^* u,{ \Gamma_s }^* u)_{L^2(\abs{D_x\chi(t,x)}dx)}\\
										&=2Re((\partial_t { \Gamma_s }^* u, { \Gamma_s }^* u))_{L^2(\abs{D\chi(t,x)}dx)})
										+({ \Gamma_s }^* u,{ \Gamma_s }^* u)_{L^2(\frac{d}{dt}\abs{D\chi(t,x)}dx)}\\
										&= -2 Re(( { \Gamma_s }^*T_{a^*} u, { \Gamma_s }^* u)_{L^2(\abs{D\chi(t,x)}dx)})
										+2Re(({ \Gamma_s }^*  f,{ \Gamma_s }^* u))_{L^2(\abs{D\chi(t,x)}dx)}\\
										&+2Re(([\partial_t { \Gamma_s }^*] u, { \Gamma_s }^* u)_{L^2(\abs{D\chi(t,x)}dx)})
										+({ \Gamma_s }^* u,{ \Gamma_s }^* u)_{L^2(\frac{d}{dt}\abs{D\chi(t,x)}dx)}									
\intertext{by change of variables,}
&\frac{d}{dt}({ \Gamma_s }^* u,{ \Gamma_s }^* u)_{L^2(\abs{D_x\chi(t,x)}dx)}\\
&= -2 Re(( { \Gamma_s }^*T_{a^*} u\circ \chi^{-1}, { \Gamma_s }^* u\circ \chi^{-1})_{L^2})
										+2Re(({ \Gamma_s }^*  f,{ \Gamma_s }^* u))_{L^2(\abs{D\chi(t,x)}dx)}\\
										&+2Re(([\partial_t { \Gamma_s }^*] u, { \Gamma_s }^* u)_{L^2(\abs{D\chi(t,x)}dx)})
										+({ \Gamma_s }^* u,{ \Gamma_s }^* u)_{L^2(\frac{d}{dt}\abs{D\chi(t,x)}dx)}.
\end{align*}											
Now notice that,
\[\RE\bigg(\int T_{D[{ \Gamma_s }^*T_{a^*} u\overline{{ \Gamma_s }^*u}]\circ \chi^{-1}}\chi^{-1}dx\bigg)=\int T_{D[{ \Gamma_s }T_{\RE(a)} u\overline{{ \Gamma_s }u}]\circ \chi^{-1}}\chi^{-1}dx+R,\]							
where $R$ verifies by Theorem \ref{paracomposition_section Paracomposition_subsec paracomp on the euclidean space_theorem paralinearisation of composition}:
\begin{equation}\label{geometric proof quasi WW_Appendix energy est of pulled back eq_theorem on para eq_proof eq 1}
\abs{R}\leq C(\norm{D\chi^{-1}}_{L^\infty L^\infty})
														\norm{ \Gamma_s u}^2_{L^2}.
\end{equation}
Thus by Theorem \ref{paracomposition_section Paracomposition_subsec paracomp on the euclidean space_theorem paralinearisation of composition}:
\begin{align}\label{geometric proof quasi WW_Appendix energy est of pulled back eq_theorem on para eq_proof eq 6}
&\frac{d}{dt}({ \Gamma_s }^* u,{ \Gamma_s }^* u)_{L^2(\abs{D_x\chi(t,x)}dx)}\\
										&= -2 ({ \Gamma_s }T_{Re(a)} [(\chi^{-1})^*u], { \Gamma_s }[ (\chi^{-1})^*u])_{L^2}
										+\RE\bigg(\int T_{D[{ \Gamma_s }^*T_{a^*} u\overline{{ \Gamma_s }^*u}]\circ \chi^{-1}}\chi^{-1}dx\bigg)+R\nonumber\\
										&+2Re(({ \Gamma_s }^*  f,{ \Gamma_s }^* u)_{L^2(\abs{D\chi(t,x)}dx)})
										+2Re(([\partial_t { \Gamma_s }^*] u, { \Gamma_s }^* u)_{L^2(\abs{D\chi(t,x)}dx)})
										\nonumber\\
										&+({ \Gamma_s }^* u,{ \Gamma_s }^* u)_{L^2(\frac{d}{dt}\abs{D\chi(t,x)}dx)},\nonumber\\
										&= -2 ({ \Gamma_s }T_{Re(a)} [(\chi^{-1})^*u], { \Gamma_s }[ (\chi^{-1})^*u])_{L^2}
										+\int T_{D[{ \Gamma_s }T_{\RE(a)} u\overline{{ \Gamma_s }u}]\circ \chi^{-1}}\chi^{-1}dx+R\nonumber\\
										&+2Re(({ \Gamma_s }^*  f,{ \Gamma_s }^* u)_{L^2(\abs{D\chi(t,x)}dx)})
										+2Re(([\partial_t { \Gamma_s }^*] u, { \Gamma_s }^* u)_{L^2(\abs{D\chi(t,x)}dx)})
										\nonumber\\
										&+({ \Gamma_s }^* u,{ \Gamma_s }^* u)_{L^2(\frac{d}{dt}\abs{D\chi(t,x)}dx)},\nonumber
\end{align}
Now we have
\begin{equation}\label{geometric proof quasi WW_Appendix energy est of pulled back eq_theorem on para eq_proof eq 2}
\abs{\int T_{D[{ \Gamma_s }T_{\RE(a)} u\overline{{ \Gamma_s }u}]\circ \chi^{-1}}\chi^{-1}dx}
\leq C(\norm{D\chi^{-1}}_{L^{\infty}})
														\norm{ \Gamma_s u}^2_{L^2}.
\end{equation}														
By the upper bound on $\abs{D\chi^{-1}(t,x)}$:
\begin{align}\label{geometric proof quasi WW_Appendix energy est of pulled back eq_theorem on para eq_proof eq 3}
&({ \Gamma_s }T_{Re(a)} [(\chi^{-1})^*u], { \Gamma_s }[ (\chi^{-1})^*u])_{L^2(\d^d)}\nonumber\\
														&\leq  M_0^0(Re(a))C(\norm{D_x\chi^{-1}}_{L^\infty L^\infty})
														\norm{ \Gamma_s u}^2_{L^2}.
\end{align}
Now by the upper bound on $\frac{d}{dt}\abs{D\chi(t,x)}$ and $\frac{d}{dt}\abs{D\chi^{-1}(t,x)}$  we have:
\begin{equation*}
({ \Gamma_s }^* u,{ \Gamma_s }^* u)_{L^2(\frac{d}{dt}\abs{D\chi(t,x)}dx)}\leq C(\norm{D_x\chi}_{L^\infty L^\infty})\norm{{ \Gamma_s }^*u}^2_{L^2}
\end{equation*}
Now using the upper bound on $\abs{D\chi(t,x)}$:
\begin{equation}\label{geometric proof quasi WW_Appendix energy est of pulled back eq_theorem on para eq_proof eq 7}
({ \Gamma_s }^* u,{ \Gamma_s }^* u)_{L^2(\frac{d}{dt}\abs{D\chi(t,x)}dx)}\leq C(\norm{D_x\chi}_{L^\infty L^\infty},\norm{D_x\chi^{-1}}_{L^\infty L^\infty})\norm{ \Gamma_s u}^2_{L^2}.
\end{equation}
Analogously we get:
\begin{align}
({ \Gamma_s }^*  f,{ \Gamma_s }^* u)_{L^2(\abs{D\chi(t,x)}dx)}
&\leq C(\norm{D_x\chi}_{L^\infty L^\infty(\d^d)},\norm{D_x\chi^{-1}}_{L^\infty L^\infty})\norm{ \Gamma_s u}_{L^2}\norm{ \Gamma_s f}_{L^2},\label{geometric proof quasi WW_Appendix energy est of pulled back eq_theorem on para eq_proof eq 4}\\
([\partial_t { \Gamma_s }^*] u, { \Gamma_s }^* u)_{L^2(\abs{D\chi(t,x)}dx)}&\leq C(\norm{D_x\chi}_{L^\infty L^\infty},\norm{D_x\chi^{-1}}_{L^\infty L^\infty})\norm{ \Gamma_s u}^2_{L^2}.\label{geometric proof quasi WW_Appendix energy est of pulled back eq_theorem on para eq_proof eq 5}
\end{align}
Thus finally we get by combining \eqref{geometric proof quasi WW_Appendix energy est of pulled back eq_theorem on para eq_proof eq 6}, \eqref{geometric proof quasi WW_Appendix energy est of pulled back eq_theorem on para eq_proof eq 1}, \eqref{geometric proof quasi WW_Appendix energy est of pulled back eq_theorem on para eq_proof eq 2}, \eqref{geometric proof quasi WW_Appendix energy est of pulled back eq_theorem on para eq_proof eq 3}, \eqref{geometric proof quasi WW_Appendix energy est of pulled back eq_theorem on para eq_proof eq 7}, \eqref{geometric proof quasi WW_Appendix energy est of pulled back eq_theorem on para eq_proof eq 4}and \eqref{geometric proof quasi WW_Appendix energy est of pulled back eq_theorem on para eq_proof eq 5}:
\begin{equation}
({ \Gamma_s }T_{Re(a)} [(\chi^{-1})^*u], { \Gamma_s }[ (\chi^{-1})^*u])_{L^2(\d^d)}\leq C(\norm{D_x\chi}_{L^\infty L^\infty},\norm{D_x\chi^{-1}}_{L^\infty L^\infty})\norm{ \Gamma_s u}^2_{L^2}
\end{equation}
\end{itemize}

To conclude we combine the computations from both methods and get:
\begin{align*}
\frac{d}{dt}[( \Gamma_s u, \Gamma_s u)_{L^2}]&\leq C(\norm{D_x\chi}_{L^\infty L^\infty},\norm{D_x\chi^{-1}}_{L^\infty L^\infty},M_0^0(Re(a)))\norm{ \Gamma_s u}^2_{L^2}\\
								&+C(\norm{D_x\chi}_{L^\infty L^\infty},\norm{D_x\chi^{-1}}_{L^\infty L^\infty})
								\norm{ \Gamma_s u}_{L^2}\norm{ \Gamma_s f}_{L^2}.
\end{align*}								
The result then follows from the Gronwall Lemma.
\end{proof}
We see that the proof depends essentially on symbolic calculus rules and those still clearly hold in the case of pseudodifferential operators as presented in Appendix \ref{paracomposition_section Notions of microlocal analysis}. 
\begin{theorem} \label{geometric proof quasi WW_Appendix energy est of pulled back eq_theorem on pseudo eq} 
Let $T>0$, $\chi \in W^{1,\infty}([0,T], C^{\infty}(\d^d))$ such that $D_x \chi \in C^{\infty}_b(\d^d)$ and consider $(a_t)_{t\in \r}$ a family of symbols in $S^\beta (\d^d)$ with $\beta \in \r$, such that
 $t \mapsto a_t$ is continuous and bounded from $\r$ to $S^\beta(\d^d)$
  and such that $ Re(a_t)=\frac{a_t+a_t^\top}{2}$ is bounded in $S^0(\d^d)$.
  Suppose moreover that $\chi(t,\cdot)$ is a diffeomorphism between open sets of $\d^d$ and that we have the bounds:
  \begin{equation}\label{geometric proof quasi WW_Appendix energy est of pulled back eq_theorem on pseudo eq_eq1}
  \exists C>0, \forall t \leq T,\forall x, C^{-1}\leq \abs{ D_x \chi(t,x)}\leq C.\\
 \end{equation}
Put $(\cdot)^*  $ is the change of variables by $\chi$ as presented in Theorem \ref{paracomposition_section Pull-back of pseudo and para- differential operators_theorem change of variable pseudo}.\\
   Then for all initial data $u_0 \in H^s(\d^d)$ and $f \in  C^0([0,T];H^s(\d^d))$ the Cauchy problem:
 \begin{equation} \label{geometric proof quasi WW_Appendix energy est of pulled back eq_theorem on pseudo eq_eq2} 
\begin{cases}
 \partial_t u+\op({a^*})u=f\\
 \forall x \in \d^d, u(0,x)=u_0(x)
\end{cases}
\end{equation}
has a unique solution $u \in C^0([0,T];H^s(\d^d))\cap C^1([0,T];H^{s-\beta}(\d^d))$ which verifies the estimates:
  \begin{align}\label{geometric proof quasi WW_Appendix energy est of pulled back eq_theorem on pseudo eq_eq2}
\norm{u(t)}_{H^s}&\leq e^{C(\norm{D_x\chi}_{L^\infty L^\infty},\norm{D_x\chi^{-1}}_{L^\infty L^\infty})t}\norm{u_0}_{H^s}\\
&+2\int_0^te^{C(\norm{D_x\chi}_{L^\infty L^\infty},\norm{D_x\chi^{-1}}_{L^\infty L^\infty})(t-t')}\norm{f(t')}_{H^s}dt', \nonumber
\end{align}
where C depends also on a finite symbol semi-norm of $Re(a_t)$.
Again fixing the initial data at 0 is an arbitrary choice.
More precisely, $\forall 0 \leq t_0\leq T$ and all data \\ $u_0 \in H^s(\d^d)$  the Cauchy problem:
 \begin{equation} \label{geometric proof quasi WW_Appendix energy est of pulled back eq_theorem on pseudo eq_eq3} 
\begin{cases}
 \partial_t u+\op({a^*})u=f\\
 \forall x \in \d^d, u(t_0,x)=u_0(x)
\end{cases}
\end{equation}
has a unique solution $u \in C^0([0,T];H^s(\d^d))\cap C^1([0,T];H^{s-\beta}(\d^d))$ which verifies the estimate:
\begin{align*}
\norm{u(t)}_{H^s}&\leq e^{C(\norm{D_x\chi}_{L^\infty L^\infty},\norm{D_x\chi^{-1}}_{L^\infty L^\infty})\abs{t-t_0}}\norm{u_0}_{H^s}\\
&+2\abs{\int_{t_0}^te^{C(\norm{D_x\chi}_{L^\infty L^\infty},\norm{D_x\chi^{-1}}_{L^\infty L^\infty})(t-t')}\norm{f(t')}_{H^s}dt'}.
\end{align*}
\end{theorem} 

We finally show a general regularizing effect due to integration in time.
\begin{theorem} \label{geometric proof quasi WW_Appendix energy est of pulled back eq_theorem on regularization of elliptic evolution pde} 
Consider$(a_t)_{t\in \r}$ a family of symbols in $S^\beta(\d^d)$ with $\beta \in \r$, such that
 $t \mapsto a_t$ is continuous and bounded from $\r$ to $S^\beta(\d^d)$
  and such that $ \RE(a_t)=\frac{a_t+a_t^\top}{2}$ is bounded in $S^0(\d^d)$, and take $T>0$.
   Then for all initial data $u_0 \in H^s(\d^d)$, and $f \in  C^0([0,T];H^s(\d^d))$ the Cauchy problem:
 \begin{equation} 
\begin{cases}
 \partial_t u+op(a)u=f\\
 \forall x \in \d^d, u(0,x)=u_0(x)
\end{cases}
\end{equation}
has a unique solution $u \in C^0([0,T];H^s(\d^d))\cap C^1([0,T];H^{s-\beta}(\d^d))$ which verifies the estimates:
\[\norm{u(t)}_{H^s(\d^d)}\leq e^{Ct}\norm{u_0}_{H^s(\d^d)}+2\int_0^te^{C(t-t')}\norm{f(t')}_{H^s(\d^d)}dt',\]
where C depends on a finite symbol semi-norm $M_{1}^0(\RE(a_t))$.\\
Suppose moreover that $a$ is elliptic that is:
\[ \forall (x,\xi)\in \r^{2d}, \abs{a(x,\xi)}\geq C\langle \xi \rangle^\beta. \]
Then $\forall t \in[0,T]$:
\[\norm{\int_0^t u(s,\cdot)ds}_{H^s}\leq C(\norm{u_0}_{H^{s-1}}+\norm{u_0}_{H^{s-\beta}}+\norm{f}_{L^\infty([0,T],H^{s-1})}+\norm{f}_{L^\infty([0,T],H^{s-\beta})}).\]
\end{theorem}
\begin{proof}
We start by writing:
\begin{align*}
\partial_t u+\op(a)u&=f\\
\intertext{we then apply $\op(a^{-1})$:}
\op(a^{-1})\partial_t u+u&=\op(a^{-1})f+Ru\\
\intertext{with R $\in S^{-1}(\d^d)$,}
\partial_t \op(a^{-1}) u+u&=\op(a^{-1})f+Ru+\op(\partial_ta^{-1})u=\op(a^{-1})f+Ru+\op(\frac{\partial_ta}{a^2})u.
\end{align*}
the proof then follows by integration in time and the usual elliptic estimates.
\end{proof}


\begin{thebibliography}{9}
 \bibitem{Alazard09}
T. Alazard, G. Metivier:
\textit{Paralinearization of the Dirichlet to Neumann operator, and regularity of diamond waves}, Comm. Partial Differential Equations, 34 (2009), no. 10-12, 1632-1704.
\\
\bibitem{Alazard11}
T. Alazard, N. Burq, C. Zuily:
\textit{On the water waves equations with surface tension}, Duke Math. J. 158(3), 413-499 (2011).
\\
\bibitem{Alazard11Strichartz}
T. Alazard, N. Burq, C. Zuily:
\textit{Strichartz estimates for water waves}, Ann. Sci. \'Ec. Norm. Sup\'er. (4), 44 (2011), no. 5, 855-903.
\\
\bibitem{Alazard13}
T. Alazard, N. Burq, C. Zuily:
\textit{The water-waves equations: from Zakharov to Euler}, Studies in Phase Space Analysis with Applications to PDEs. Progress in Nonlinear Differential Equations and Their Applications Volume 84, 2013, pp 1-20.
\\
\bibitem{Alazard14}
T. Alazard, N. Burq, C. Zuily:
\textit{On the Cauchy problem for gravity water waves}, Invent. Math., 198 (2014), 71-163.
 \\
 \bibitem{Alazard16}
T. Alazard, N. Burq, C. Zuily,:
\textit{Cauchy theory for the gravity water waves system with non localized initial data}, Ann. Inst. H. Poincar\'e Anal. Non Lin\'eaire, 33 (2016), 337-395.
\\
 \bibitem{Alazard18}
T. Alazard, P. Baldi, D. Han-Kwan:
\textit{Control for water waves}, J. Eur. Math. Soc., 20 (2018) 657-745.
 \\
 \bibitem{Alinhac86}
S. Alinhac \textit{Paracomposition et operateurs paradifferentiels}, 
Communications in Partial Differential Equations, 1986,11:1, 87-121.
  \\
  \bibitem{Bourgain93}
J. Bourgain:
\textit{On the Cauchy problem for the Kadomtsev-Petviashvili equation,}Geom. Funct.Anal.3(1993), 315-341.MR94d:35142 354,355,360.
 \\ 
  \bibitem{Bourgain93kdv}
J. Bourgain:
\textit{Fourier transform restriction phenomena for certain lattice subsets and applications to nonlinear evolution equations,}Geom. Funct.Anal.3(1993), 3: 209. \url{https://doi.org/10.1007/BF01895688}.
 \\ 
   \bibitem{Marzuola13}
R. M. Chen, J. L. Marzuola, D. Spirn, and J. D. Wright:
\textit{On the regularity of the flow map for the gravity-capillary equations, }Journal of Functional Analysis
Volume 264, Issue 3, 1 February 2013, Pages 752-782.
 \\ 
 \bibitem{Craig93}
W. Craig, C. Sulem:
\textit{Numerical simulation of gravity water waves}, J. Comput. Phys. 108(1), 73-83 (1993).
 \\
\bibitem{Himonas10}
A.A. Himonas, \& G. Misiolek, Commun.
\textit{Non-Uniform Dependence on Initial Data of Solutions to the Euler Equations of Hydrodynamics} Math. Phys. (2010) 296: 285. \url{https://doi.org/10.1007/s00220-010-0991-1}
\\
   \bibitem{Hormander71}
L. H\"ormander: \textit{Fourier integral operators. I}, 
Acta Math. 127 (1971), 79-183.
 \\
\bibitem{Kato88}
  T. Kato, G. Ponce,
  \textit{Commutator estimates and the Euler and Navier-Stokes equations},
Comm. Pure Appl. Math. 41 (1988) 891-907.
 \\
\bibitem{Koch05}
H. Koch,  N. Tzvetkov,
\textit{Nonlinear wave interactions for the Benjamin-Ono equation},
International Mathematics Research Notices, Volume 2005, Issue 30, 1 January 2005,
  New York: McGraw-Hill.
  \\
  \bibitem{Koch08}
H. Koch,  N. Tzvetkov,
\textit{On finite energy solutions of
the KP-I equation}, Math. Z. 258 (2008) 55-68.
  \\
 \bibitem{Metivier08}
G. Metivier,  \textit{Para-differential calculus and applications to the Cauchy problem for non linear systems}, Ennio de Giorgi Math. res. Center Publ., Edizione della Normale, 2008.
\\
\bibitem{Molinet01}
 L. Molinet, J. C. Saut and N. Tzvetkov,
\textit{Ill-posedness issues for the Benjamin-
Ono and related equations,} SIAM J. Math. Anal. 33 (2001), 982-988.
 \\
  \bibitem{Saut02}
 L. Molinet, J. C. Saut and N. Tzvetkov
\textit{Well-posedness and ill-posedness results for the Kadomtsev-Petviashvili-I equation}, Duke Math. J. Volume 115, Number 2 (2002), 353-384.
 \\
 \bibitem{Molinet07}
  L. Molinet,
\textit{Global well-posedness in the energy space for the Benjamin–Ono equation on the circle}, Math. Annalen volume 337 (2007), pp. 353–383.
 \\
\bibitem{Molinet08}
  L. Molinet,
\textit{Global Well-Posedness in $L^2$ for the Periodic Benjamin-Ono Equation}, American Journal of Mathematics
Vol. 130, No. 3 (Jun., 2008), pp. 635-683.
 \\
  \bibitem{Ayman18}
A. R. Said:
\textit{On Paracompisition and change of variables in Paradifferential operators}, arXiv preprint, arXiv:2002.02943.
 \\ 
 \bibitem{Ayman19}
A. R. Said:
\textit{Regularity results on the flow map of periodic dispersive Burgers type equations and the Gravity-Capillary equations}, In preparation.
\\
 \bibitem{Saut13}
 J. C. Saut,
\textit{Asymptotic Models for Surface and Internal waves}, 29 Brazilian Mathematical Colloquia, IMPA Mathematical Publications ,2013.
 \\
\bibitem{Taylor07}
  M. E. Taylor,
  \textit{Tools for PDE: Pseudodifferential Operators, Paradifferential Operators, and Layer Potentials},
 American Mathematical Soc., 2007.
 \\
  \bibitem{Zakharov68}
V.E. Zakharov:
\textit{Stability of periodic waves of finite amplitude on the surface of a deep fluid}, J. Appl. Mech. Techn. Phys. 9(2), 190-194 (1968).
 \\ 
\bibitem{Zuily}
C. Zuily:
\textit{Note on the Burgers equation}, personal communication.

\end{thebibliography}
\end{document}